\newcommand{\cU}{\mathcal{U}}
\newcommand{\X}{\mathcal{X}}
\newcommand{\rn}{\mathbb{R}^n}
\newcommand{\norm}[1][\cdot]{\left\| \kern.05em #1 \kern.05em \right\|}
\newcommand{\conic}[1]{\operatorname{conic}\left(#1\right)}
\newtheorem{theorem}{Theorem}[subsection]
\newtheorem{cor}[theorem]{Corollary}
\newtheorem{proposition}[theorem]{Proposition}
\newtheorem{definition}[theorem]{Definition}
\title{Problem-Driven Scenario Reduction and Scenario Approximation for Robust Optimization}
\author[1]{Jamie Fairbrother}
\author[2]{Marc Goerigk\footnote{Corresponding author. Email: marc.goerigk@uni-passau.de}}
\author[2]{Mohammad Khosravi}
\affil[1]{Department of Management Science, Lancaster University, United Kingdom}
\affil[2]{Business Decisions and Data Science, University of Passau, Germany}
\date{}
\begin{document}

\maketitle

\begin{abstract}
In robust optimization, we would like to find a solution that is immunized against all scenarios that are modeled in an uncertainty set. Which scenarios to include in such a set is therefore of central importance for the tractability of the robust model and practical usefulness of the resulting solution. We consider problems with a discrete uncertainty set affecting only the objective function. Our aim is reduce the size of the uncertainty set, while staying as true as possible to the original robust problem, measured by an approximation guarantee. Previous reduction approaches ignored the structure of the set of feasible solutions in this process. We show how to achieve better uncertainty sets by taking into account what solutions are possible, providing a theoretical framework and models to this end. In computational experiments, we note that our new framework achieves better uncertainty sets than previous methods or a simple K-means approach.
\end{abstract}

\noindent\textsf{\textbf{Keywords:}} Robust optimization; scenario reduction; approximation algorithms; scenario clustering

\section{Introduction}
\label{sec:introduction}

We study decision making problems where there is uncertainty in the objective function.
Such a situation occurs whenever we know the alternatives that are available to us, but cannot precisely predict their consequences.
Decision making processes in practice are usually affected by some source of uncertainty; as an example, consider we plan a route in advance, but do not know the precise traffic at the time of travel.

Different decision making paradigms have been studied to include this uncertainty a priori.
Most prominently, we can apply stochastic optimization (see, e.g., \cite{powell2019unified}) if we have an estimate of the probability distribution over the possible outcomes available, or we can apply robust optimization (see, e.g., \cite{ben2009robust,bookbertsimasdenHertog2022}) if this distribution is not known or we are particularly risk-averse.

While this paper focuses on robust optimization, both methods require some description of the set of possible outcomes that we would like to include in our planning.
In robust optimization, structured uncertainty sets can have significant advantages in terms of tractability (frequently used are budgeted uncertainty sets in combinatorial optimization for this reason, see \cite{bertsimas2003robust}).
Such sets often need to be constructed from discrete observed data.
From a modeling perspective therefore, using a discrete set of outcomes is simpler and offers more flexibility.
In the case of stochastic optimization, problems often become intractable when one uses a continuous probability distribution, and so using a discrete set of outcomes is a requirement.
In both stochastic and robust optimization, such a discrete set of outcomes is referred to as a \emph{scenario set}.

Unfortunately, the flexibility of using scenario sets for robust optimizations comes at  a price: robust optimization problems with scenario sets become much harder to solve than their nominal (not uncertain) counterpart. Such hardness results are documented in the survey \cite{kasperski2016robust} and the recent book \cite{robook}, amongst other such overviews.

A more detailed view of the computational complexity of combinatorial problems with discrete uncertainty sets can be obtained by studying the approximability of such problems.
There are two different views that can be applied: on the one hand, if we assume that the uncertainty set is discrete, but the cardinality of the set is a fixed constant and thus not part of the problem input (in the same way as we may define bicriteria optimization as optimizing a fixed amount of two objectives simultaneously), then we can frequently achieve strong approximability results.
For example, robust shortest path, spanning tree, knapsack or assignment problems all have a fully polynomial-time approximation scheme (FPTAS) if the number of scenarios is constant.
Problems that have an FPTAS may be considered the ``easiest'' of all NP-hard optimization problems (see, e.g., \cite{korte2011combinatorial}).
On the other hand, if we allow an arbitrary number of scenarios $N$ as part of the problem input, all of the mentioned problems become strongly NP-hard and not approximable within a constant factor.
Many combinatorial problems are not even approximable within $O(\log^{1-\epsilon} N)$ for any $\epsilon > 0$, while allowing for a simple $O(N)$ approximation method.
Essentially, this means that the more scenarios we have in a robust optimization problem, the more difficult it becomes.

For this reason, a natural question is to ask how far we can reduce the number of scenarios without changing the problem much, or even not at all.
In the following, we refer to the task of removing scenarios from the problem such that the optimal solution and its objective value remains the same as ``scenario reduction'' (a more formal definition is given later in the paper).
If we are looking for a good representation of a high-cardinality uncertainty set by a smaller set of given cardinality, we call the task ``scenario approximation''.
Note that this terminology differs to how it is used in the stochastic optimization literature where the term scenario reduction is used in the general sense of reducing the number of scenarios whether or not the objective value is preserved exactly.

Scenario approximation (and more generally scenario generation) has been thoroughly studied in the area of stochastic optimization, see \cite[Chapter~4]{king2012modeling} for a recent overview.
This presence of a probability distribution can facilitate the analysis of the resulting error when scenarios are removed from the problem through the use of probability metrics, see \cite{DupacovaEA03} for example.
A common distinction made in the scenario reduction literature is to distinguish between ``distribution-driven'' approaches to scenario reduction, and ``problem-driven'' approaches.
The former types of approaches (such as that cited above) only concern themselves with approximating the distribution in some way, while the latter type of approach aims to exploit problem features in order to get a more concise representation of the uncertainty.
For example \cite{henrion2022problem} proposes a probability metric on the stochastic problem which could be used for scenario approximation.
A particularly relevant for this work is the problem-driven approach for scenario approximation proposed in \cite{fairbrother2019problem} for problems involving tail-risk.
These are problems where one aims to minimize extreme losses in the tail of a distribution, and robust optimization problems, where one aims to minimize the worst case, could be viewed as a special case of this.

For robust optimization problems, there seem to be fewer results on principled scenario reduction and approximation.
One long-standing approximation method (which provides the aforementioned $O(N)$ approximation guarantee for all robust problems where the nominal problem can be solved in polynomial time) is to simply take the average of all scenarios, and to solve the nominal problem with respect to this average case.
The survey \cite{aissi2009min} provides an excellent discussion of this approach.
Taking the midpoint of all scenarios is also popular in the related min-max regret problem setting, see \cite{kasperski2006approximation,conde2012constant}, as well as the tighter a fortiori analysis in \cite{chassein2015new}.
It is possible to consider the best possible (in the sense of the resulting approximation guarantee) representation of a scenario set using a single scenario as an optimization problem, see \cite{goerigk2019representative}.
Furthermore, it is possible to estimate the error that occurs when multiple scenarios are merged together in arbitrary order, see \cite{chassein2018scenario} and \cite{chassein2020approximating}.

To construct uncertainty sets from discrete observations, several data-driven methods have been proposed in the literature. As examples, we mention methods to construct uncertainty sets based on hypothesis tests \cite{bertsimas2018data}, support vector clustering \cite{shang2017data}, or neural networks \cite{goerigk2023data}. The recent work \cite{wang2023learning} aims at constructing ellipsoidal uncertainty sets while taking the quality of the resulting robust solution into account.

Most closely related to this paper are the results provided in \cite{goerigk2023optimal}, where optimization models and heuristics for the scenario approximation problem are introduced.
Given a discrete uncertainty set, it is argued that solving an NP-hard problem (heuristically) in a preprocessing step results in a smaller set of given size along with an approximation guarantee that can be used to estimate the quality of the resulting robust solution before solving the corresponding robust problem.
This guarantee holds independently of the set of feasible solutions for the decision maker, which is not taken into account in the optimization.

In this paper, we extend the current toolkit of scenario reduction and approximation for robust optimization problems with discrete uncertainty in the objective to be problem-driven. We formally define the concept of sufficient scenario sets that result in the same objective value as a given uncertainty set and show that the reduction problem is NP-hard. We introduce four sufficient criteria to identify sufficient scenario sets and compare their respective strength. Furthermore, we provide an alternative perspective based on conic duality. We show how to find reduced uncertainty sets using mixed-integer (non-)linear programming. The reduction approaches and their respective optimization models are generalized to approximation approaches as well. In computational experiments, we show that our new reduction and approximation methods outperform previous approaches that do not take the structure of the set of feasible solutions into account.

The remainder of this paper is structured as follows. We study scenario reduction approaches in Section~\ref{sec:reduction} and extend them to the scenario approximation setting in Section~\ref{sec:approximation}. Our computational experiments are presented in Section~\ref{sec:experiments}. We conclude the paper and point out further research questions in Section~\ref{sec:conclusions}. In an appendix, we discuss generalizations to non-linear objective functions (Appendix~\ref{sec:appendix}) and further experimental results (Appendix~\ref{sec:expappendix}).

\section{Scenario Reduction}
\label{sec:reduction}

\subsection{Setting}

We consider robust optimization problems with uncertain linear objective of the form
\begin{equation}
\min_{\pmb{x}\in X} \max_{\pmb{c}\in \cU}\ f(\pmb{x},\pmb{c}),\ \text{ with } f(\pmb{x},\pmb{c}) = \sum_{i\in[n]} c_i x_i, \tag{RO} \label{eq:full-scen-robust-opt}
\end{equation}
where $X\subseteq\rn_+$ is the set of feasible decisions, $\cU\subseteq\rn_+$ is a set of future possible scenarios against we wish to protect our solution (the uncertainty set), and $f: X\times \cU \rightarrow \mathbb{R}_+$ represents some cost or loss we would like to minimize. We use the notation $[n] = \{1,\ldots,n\}$ for brevity. Throughout the paper, we assume that optimization problems are sufficiently well-posed that a maximizer or minimizer is indeed attained, which allows us to write $\min$ and $\max$ instead of $\inf$ and $\sup$.

In the case of discrete uncertainty $\cU=\{\pmb{c}^1,\ldots,\pmb{c}^N\}$, the computational complexity of solving \eqref{eq:full-scen-robust-opt} typically scales badly with the number of scenarios in $\cU$, in particular if $X\subseteq\{0,1\}^n$ represents a combinatorial problem. To improve tractability, we would thus like this set to be as small as possible. Even if $\cU$ is not discrete, it is often beneficial to replace the uncertainty with a smaller set, e.g., we may reduce a continuous set to a finite set or may remove scenarios that are unnecessary.

In this paper, we consider the two problems of scenario reduction and scenario approximation. In scenario reduction, the aim is to find a subset of scenarios $\cU' \subseteq \cU$ that results in an equivalent robust optimization problem, i.e., we would like to remove unnecessary scenarios from the problem. In scenario approximation, the aim is to find a set $\cU'$, which is not necessarily contained in $\cU$, of fixed cardinality, such that the original problem is as closely approximated as possible.

Furthermore, in both settings we would like to find sets $\cU'$ that are not designed for a single robust optimization problem of the form \eqref{eq:full-scen-robust-opt}, but rather for a set of possible problems that may have different constraints. For example, we would like to apply the scenario reduction or approximation approach not for a single $s$-$t$-pair in a graph with uncertain edge costs, but in a way that robust problems with respect to multiple $s$-$t$-pairs are considered simultaneously. The advantage of this approach is that both scenario reduction or approximation only need to be performed once for an uncertainty set $\cU$, and then can be applied to multiple robust optimization problems. To include this perspective, let us assume that the uncertainty set $\cU$ applies to a family $\{X_1,X_2,\ldots\}$ of different sets of feasible solutions with the same dimension. We further assume that a set $\X\subseteq\mathbb{R}^n_+$ is known such that $X_i \subseteq \X$ for all $i$, that is, $\X$ reflects properties that all sets of feasible solutions have in common (such as a cardinality constraint, for example). If only a single instance $X$ needs to be considered, we can simply set $\X=X$.
In what follows, we apply scenario reduction and approximation to all solutions in this set $\X$.

\subsection{Problem Definitions and Hardness}

We study the scenario reduction problem in this section and first consider different approaches to define ''equivalent'' problems more formally.

\begin{definition}
A set $\cU'\subseteq\rn$ is said to be
\begin{itemize}
\item an \emph{optimality-sufficient} set, if
\begin{equation}
\min_{\pmb{x}\in\X} \max_{\pmb{c}\in\cU} f(\pmb{x},\pmb{c}) = \min_{\pmb{x}\in\X} \max_{\pmb{c}\in\cU'} f(\pmb{x},\pmb{c})
\end{equation}
and the set of optimal solutions is the same.
\item a \emph{sufficient} set, if
\begin{equation}
    \label{eq:sufficiency}
    \max_{\pmb{c}\in\cU'} f(\pmb{x},\pmb{c}) = \max_{\pmb{c}\in\cU} f(\pmb{x},\pmb{c}) \qquad \text{ for all } \pmb{x}\in\mathcal{X}.
  \end{equation}
\end{itemize}
Furthermore, if additionally $\cU'\subseteq \cU$ holds, then $\cU'$ is an (optimality-)sufficient \emph{subset}. $\cU'$ is minimally (optimality-)sufficient, if any proper subset $\tilde{\cU}\subsetneq\cU'$ is not (optimality-)\-sufficient.
\end{definition}

If a set $\cU'$ is optimality-sufficient, then solving \eqref{eq:full-scen-robust-opt} with $\cU'$ instead of $\cU$ will lead to a solution which is also optimal for the original problem.
A minimally optimality-sufficient subset of $\cU$ is the best we can achieve in scenario reduction.

We construct a small example to highlight difficulties arising from the definition of optimality-sufficient sets.
Let us assume that there is a unique solution $\pmb{x}^*\in\X$ to the robust problem \eqref{eq:full-scen-robust-opt} with a linear function $f$, $\X\subseteq\{0,1\}^n$ and $\cU\subseteq\rn_{>0}$. Let $v=\max_{\pmb{c}\in\cU} f(\pmb{x}^*,\pmb{c})$ be its value and let us further assume that $\pmb{x}^*\neq \pmb{0}$. Then we can construct a scenario $\pmb{c}'\in\rn$ so that $c'_i = v/\|\pmb{x}^{*}\|_1$ for all $i\in[n]$ with $x_i=1$, and $c'_i = v + 1$ otherwise. This means that $\cU'=\{\pmb{c}'\}$ is an optimality-sufficient set, and indeed the smallest possible such set. This example illustrates that the concept of optimality-sufficient sets is too general to be useful in practice, as we would like to avoid solving \eqref{eq:full-scen-robust-opt} in the first place. However, it is easy to verify that any sufficient set is also an optimality-sufficient set. For this reason, we focus on finding sufficient (sub)sets.

The following complexity result was presented in the context of iterative constraint generation to solve robust problems.

\begin{theorem}[\cite{goerigk2022data}]\label{th:hardness}
Let $X$ be a polyhedron given by an outer description, and let $\cU=\{\pmb{c}^1,\ldots,\pmb{c}^N\}$ be a discrete set. Then the problem
\[ \max_{\substack{\mathcal I \subseteq [N]\\ |\mathcal I |\le k}} \min_{\pmb{x}\in X} \max_{j\in \mathcal{I}} (\pmb{c}^j)^\top \pmb{x} \]
is NP-hard, if $k$ is part of the input.
\end{theorem}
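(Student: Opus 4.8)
The plan is to prove NP-hardness through the decision version of the problem: given a threshold $t$, decide whether there exists $\mathcal{I}\subseteq[N]$ with $|\mathcal{I}|\le k$ and $\min_{\pmb{x}\in X}\max_{j\in\mathcal{I}}(\pmb{c}^j)^\top\pmb{x}\ge t$. The key observation driving the reduction is that for a fixed $\mathcal{I}$ the inner min--max is a linear program, so the difficulty lies entirely in the selection; moreover, the condition ``robust value $\ge t$'' is equivalent to a covering statement, namely that every $\pmb{x}\in X$ satisfies $(\pmb{c}^j)^\top\pmb{x}\ge t$ for at least one selected $j$. Choosing few scenarios that jointly ``cover'' all of $X$ in this sense mirrors exactly the structure of a set cover, so I would reduce from \textsc{Set Cover}.

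Given a \textsc{Set Cover} instance with universe $U=[m]$, subsets $S_1,\ldots,S_N\subseteq U$, and budget $k$, I would set $n=m$, take $X$ to be the standard simplex $\{\pmb{x}\in\rn_+ : \sum_{i\in[n]} x_i = 1\}$ (a polyhedron given by an outer description), and define each scenario as the indicator vector of its subset, $c^j_i=1$ if $i\in S_j$ and $c^j_i=0$ otherwise. Then $(\pmb{c}^j)^\top\pmb{x}=\sum_{i\in S_j} x_i$, and evaluating at a vertex $\pmb{e}^i$ recovers membership, since $(\pmb{c}^j)^\top\pmb{e}^i=1$ exactly when $i\in S_j$. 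I would fix the threshold $t=1/k$.

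The heart of the argument consists of two claims. First, if $\mathcal{I}$ covers $U$, then for every $\pmb{x}\in X$ we have $\sum_{j\in\mathcal{I}}(\pmb{c}^j)^\top\pmb{x}=\sum_{i\in[n]} x_i\,|\{j\in\mathcal{I}: i\in S_j\}|\ge\sum_{i} x_i=1$, so the maximum over $j$ is at least $1/|\mathcal{I}|\ge 1/k=t$, whence the robust value is at least $t$. Second, if $\mathcal{I}$ fails to cover some element $i^*$, then putting all mass on that coordinate, $\pmb{x}=\pmb{e}^{i^*}$, gives $(\pmb{c}^j)^\top\pmb{x}=0$ for all $j\in\mathcal{I}$, so the robust value is $0<t$. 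Together these show that the outer maximum is at least $t$ if and only if the \textsc{Set Cover} instance admits a cover of size at most $k$; polynomiality of the construction is immediate, which completes the reduction.

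The main obstacle to watch is the separation around the threshold. Since the inner value is the minimum of a convex, piecewise-linear function over $X$, it need not be attained at a vertex, so one cannot argue purely vertex-by-vertex; the averaging inequality in the first claim is precisely what supplies a uniform positive gap ($\ge 1/k$ versus $0$) between the covered and uncovered cases and makes the choice $t=1/k$ robust to this interior behavior. It is also worth recording explicitly that, because the inner robust value is monotone under inclusion of $\mathcal{I}$, the budget constraint $|\mathcal{I}|\le k$ is essential---without it the maximizer would trivially be $\mathcal{I}=[N]$---which is consistent with the hypothesis that $k$ is part of the input.
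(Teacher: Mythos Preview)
The paper does not actually prove this statement; Theorem~\ref{th:hardness} is quoted with a citation to \cite{goerigk2022data} and no argument is supplied in the present paper. Your reduction from \textsc{Set Cover} is nonetheless correct and self-contained: the simplex is a polyhedron in outer description, the averaging bound $\max_{j\in\mathcal{I}} (\pmb{c}^j)^\top\pmb{x}\ge \tfrac{1}{|\mathcal{I}|}\sum_{j\in\mathcal{I}} (\pmb{c}^j)^\top\pmb{x}\ge 1/k$ handles the covered case uniformly over all $\pmb{x}\in X$, and the vertex $\pmb{e}^{i^*}$ witnesses value $0$ in the uncovered case, giving a clean gap around the threshold $t=1/k$.

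It is worth contrasting your construction with the paper's own \textsc{Set Cover} reduction for the very next theorem (NP-completeness of (Reduce)). There the feasible set is the \emph{finite} set of unit vectors $\{\pmb{x}\in\{0,1\}^n:\sum_i x_i=1\}$, so one can argue vertex by vertex and take threshold $1$; no averaging is needed. In your setting $X$ must be a polyhedron given by an outer description, which forces you to cope with interior points, and the pigeonhole-type averaging together with the choice $t=1/k$ is precisely the extra ingredient that bridges this gap. The two reductions are morally the same, and yours is the natural adaptation to the continuous hypothesis of the theorem.
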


This result indicates that it is hard to identify a subset of scenarios of given size that maximize the objective value. This is closely related to scenario reduction, in which case we want the robust objective value of the reduced problem to be the same as the original robust objective value. More formally, we can consider the following decision problem (Reduce): Given a finite set $\X$, a discrete uncertainty set $\cU=\{\pmb{c}^1,\ldots,\pmb{c}^N\}$, and an integer $k\le N$. Does there exist a sufficient subset $\cU'\subseteq \cU$ of size at most $k$?

\begin{theorem}
The decision problem (Reduce) is NP-complete.
\end{theorem}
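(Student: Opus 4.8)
The plan is to show membership in NP first, then establish NP-hardness by reduction from the hardness result of Theorem~\ref{th:hardness}. For membership in NP, observe that given a candidate subset $\cU'\subseteq\cU$ with $|\cU'|\le k$, we can verify sufficiency by checking condition~\eqref{eq:sufficiency} for each $\pmb{x}\in\X$. Since $\X$ is finite and given explicitly as part of the input, this amounts to evaluating $\max_{\pmb{c}\in\cU'} f(\pmb{x},\pmb{c})$ and $\max_{\pmb{c}\in\cU} f(\pmb{x},\pmb{c})$ for each of the finitely many $\pmb{x}\in\X$, each a polynomial-time computation over the listed scenarios. Hence a certificate (the subset $\cU'$) is polynomially checkable.

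For hardness, the idea is to connect (Reduce) to the maximization problem shown NP-hard in Theorem~\ref{th:hardness}. First I would argue that a sufficient subset of size at most $k$ exists if and only if the optimal value of the selection problem $\max_{\mathcal I\subseteq[N],\,|\mathcal I|\le k}\min_{\pmb{x}\in X}\max_{j\in\mathcal I}(\pmb{c}^j)^\top\pmb{x}$ equals the full robust value $\min_{\pmb{x}\in X}\max_{j\in[N]}(\pmb{c}^j)^\top\pmb{x}$. The key observation is that for any subset $\mathcal I$ we have $\max_{j\in\mathcal I}(\pmb{c}^j)^\top\pmb{x}\le\max_{j\in[N]}(\pmb{c}^j)^\top\pmb{x}$ pointwise, so the reduced robust value never exceeds the full one; equality of the two optimal values is exactly what a sufficient subset guarantees (and, conversely, a subset achieving this equality is sufficient in the sense of~\eqref{eq:sufficiency}, at least at the minimizing solutions, which is what drives the robust value). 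The decision version---asking whether value at least the full robust value is attainable with $k$ scenarios---is therefore the natural threshold question attached to the NP-hard optimization problem of Theorem~\ref{th:hardness}.

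The main obstacle I anticipate is the gap between pointwise sufficiency as defined in~\eqref{eq:sufficiency} (equality of the inner maxima for \emph{every} $\pmb{x}\in\X$) and the weaker condition of merely preserving the \emph{optimal} robust value that appears in Theorem~\ref{th:hardness}. Preserving the robust value only constrains the scenarios along the minimizing solutions, whereas sufficiency is a much stronger, solution-by-solution requirement. To bridge this I would either restrict the reduction to instances where $\X=X$ and the feasible set is engineered so that value-preservation forces pointwise agreement, or construct the instance so that the scenarios removed are genuinely dominated on all of $\X$. Care is needed because Theorem~\ref{th:hardness} is stated for a polyhedron $X$ given by an outer description, whereas (Reduce) takes a finite set $\X$; I would handle this by enumerating or suitably encoding the relevant vertices, ensuring the transformation remains polynomial.

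Finally, combining the two directions yields NP-completeness: membership in NP from the verification argument above, and NP-hardness from the polynomial reduction that maps a ``yes'' instance of the selection-threshold problem to a ``yes'' instance of (Reduce) and back. The remaining bookkeeping---verifying that the constructed $\X$ and $\cU$ have polynomial size and that the threshold is computed correctly---is routine once the equivalence between value-preservation and the existence of a sufficient subset of size $k$ is pinned down.
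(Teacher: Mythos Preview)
Your membership-in-NP argument is fine. The hardness argument, however, has a genuine gap that you yourself flag but do not close.

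The core difficulty is that (Reduce) asks for \emph{pointwise} sufficiency, i.e.\ $\max_{\pmb{c}\in\cU'}f(\pmb{x},\pmb{c})=\max_{\pmb{c}\in\cU}f(\pmb{x},\pmb{c})$ for every $\pmb{x}\in\X$, whereas the optimization problem in Theorem~\ref{th:hardness} only concerns the value $\min_{\pmb{x}}\max_{j\in\mathcal I}(\pmb{c}^j)^\top\pmb{x}$. A subset $\mathcal I$ can attain the full robust value $\min_{\pmb{x}}\max_{j\in[N]}(\pmb{c}^j)^\top\pmb{x}$ without being sufficient in the sense of~\eqref{eq:sufficiency}: equality of the outer minima does not force equality of the inner maxima at every $\pmb{x}$. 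Consequently the equivalence you assert---``a subset achieving this equality is sufficient''---is false in general, and your reduction does not go through without an additional construction that forces value-preservation to coincide with pointwise sufficiency. You mention you would ``engineer'' $\X$ to make this happen, but that is precisely the substantive part of the proof, and it is absent. A second obstacle is that Theorem~\ref{th:hardness} is stated for a polyhedron $X$ in outer description; such a polyhedron may have exponentially many vertices, so ``enumerating the relevant vertices'' to obtain a finite $\X$ is not a polynomial transformation.

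The paper avoids all of this by reducing directly from \textsc{Set Cover}. Taking $\X$ to be the $n$ standard unit vectors and letting $c^j_i=1$ iff $e_i\in S_j$, one has $\max_{\pmb{c}\in\cU}f(\pmb{x},\pmb{c})=1$ for every $\pmb{x}\in\X$, so a subset $\mathcal I$ is sufficient iff for every $i$ there is some $j\in\mathcal I$ with $c^j_i=1$, i.e.\ iff $\{S_j:j\in\mathcal I\}$ covers $E$. Here pointwise sufficiency and the covering condition coincide by construction, which is exactly the bridge your sketch lacks.
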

\begin{proof}
Let an instance of the Set Cover problem be given, consisting of a set of items $E=\{e_1,\ldots,e_n\}$, a collection $\mathcal{S}=\{S_1,\ldots,S_N\}$ of sets $S_j\subseteq[n]$, and an integer $k$. We assume that each $e_i$ is contained in at least one $S_j$. The question we ask is if there exists a set $\mathcal{I}\subseteq[N]$ with cardinality at most $k$ such that each $e_i$ is contained in at least one $S_j$ with $j\in \mathcal{I}$.

Set $\X= \{\pmb{x}\in\{0,1\}^n : \sum_{i\in[n]} x_i = 1\}$ and $\cU=\{\pmb{c}^1,\ldots,\pmb{c}^N\}$ with $c^j_i =1$ if $e_i \in S_j$, and $c^j_i=0$ otherwise. We have $f(\pmb{x},\pmb{c}) = \sum_{i\in[n]} c_ix_i = 1$ for all $\pmb{c}\in\cU$ and $\pmb{x}\in \X$.
Observe that any subset $\mathcal{I}\subseteq[N]$ is a set cover if and only if $\max_{j\in \mathcal{I}} f(\pmb{x},\pmb{c}^j) =  1$ for all $\pmb{x}\in \X$. Hence, the Set Cover problem is a Yes-Instance, if and only if the exists a sufficient subset $\cU'$ of size at most $k$.
\end{proof}

\subsection{Sufficiency Criteria}

We now introduce criteria that allow us to verify if a set $\cU'\subseteq \cU$ is indeed a sufficient subset, that is, using $\cU'$ instead of $\cU$ leads to the same robust objective value for any $\pmb{x}\in\X$.

\begin{definition}\label{def:types}
We define the following properties for a set $\cU'\subseteq \cU=\{\pmb{c}^1,\ldots,\pmb{c}^N\} \subseteq\rn_+$ and with respect to a set of feasible solutions $\X\subseteq\rn_+$.
\begin{itemize}
\item Sufficiency type (i):
\[ \forall \pmb{c}\in \cU \ \exists \pmb{c}'\in\cU' : \pmb{c}' \ge \pmb{c} \]

\item Sufficiency type (ii):
\[ \forall \pmb{c}\in\cU, \pmb{\alpha}\in\mathbb{R}^n_+\ \exists \pmb{c}'\in\cU' : \pmb{\alpha}^\top \pmb{c}' \ge \pmb{\alpha}^\top \pmb{c} \]

\item Sufficiency type (iii):
\[ \forall \pmb{c}\in\cU\ \exists \pmb{c}'\in\cU' \ \forall \pmb{x}\in\X : \pmb{x}^\top \pmb{c}' \ge \pmb{x}^\top\pmb{c} \]

\item Sufficiency type (iv):
\[ \forall \pmb{c}\in\cU, \pmb{x}\in\X\ \exists \pmb{c}'\in\cU' : \pmb{x}^\top \pmb{c}' \ge \pmb{x}^\top\pmb{c} \]

\end{itemize}
\end{definition}

Before discussing properties of sets that are of types (i-iv), we first consider the intuition behind them. Type (i) requires set $\cU'$ to contain elements with which we can dominate any of the original scenarios in each component. That is, for each $\pmb{c}\in\cU$, we can provide (at least) one scenario $\pmb{c}'\in\cU'$ that is at least as large. In type (ii), we do not require $\pmb{c}'$ to be larger than $\pmb{c}$ in every component. Instead, we only require that a direction $\pmb{\alpha}$ exists, so that $\pmb{c}'$ is farther than $\pmb{c}$ in this direction.

Both types (i) and (ii) do not consider the set of feasible solutions $\X$.
Types (iii) and (iv) on the other hand can be thought of as generalizations of type (i) and (ii) which depend on $\X$.
For type (iii), we want that some $\pmb{c}'\in\cU'$ exists for each $\pmb{c}\in\cU$ such that the objective value in $\pmb{c}'$ is at least as large as the objective value in $\pmb{c}$ for every feasible solution. In comparison, type (iv) allows us to choose the scenario $\pmb{c}'$ after the solution $\pmb{x}\in\X$ is given; that is, for any combination of original scenario $\pmb{c}$ and solution $\pmb{x}$, there is some scenario in $\cU'$ that gives an objective value that is at least as large for this particular $\pmb{x}$.

\begin{theorem}\label{th:sufficient}
Let $\cU'\subseteq \cU$ be any set that fulfills one of the properties (i)-(iv). Then, $\cU'$ is a sufficient subset.
\end{theorem}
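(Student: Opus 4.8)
The plan is to collapse the four-way case analysis to a single implication by observing that type (iv) is the weakest of the four conditions, and then to verify that type (iv) alone already forces sufficiency. The starting point is that, because $\cU'\subseteq\cU$, the inequality $\max_{\pmb{c}\in\cU'} f(\pmb{x},\pmb{c}) \le \max_{\pmb{c}\in\cU} f(\pmb{x},\pmb{c})$ holds automatically for every $\pmb{x}\in\X$. Thus all the content of \eqref{eq:sufficiency} lies in the reverse inequality, and it is this direction I will establish.

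First I would show that type (iv) implies sufficiency. Fix an arbitrary $\pmb{x}\in\X$ and let $\pmb{c}^*\in\cU$ attain $\max_{\pmb{c}\in\cU}\pmb{x}^\top\pmb{c}$, which exists by the standing well-posedness assumption. Applying type (iv) to the pair $(\pmb{c}^*,\pmb{x})$ yields some $\pmb{c}'\in\cU'$ with $\pmb{x}^\top\pmb{c}'\ge\pmb{x}^\top\pmb{c}^* = \max_{\pmb{c}\in\cU}\pmb{x}^\top\pmb{c}$. Since $\pmb{c}'\in\cU'$, this gives $\max_{\pmb{c}\in\cU'}\pmb{x}^\top\pmb{c}\ge\max_{\pmb{c}\in\cU}\pmb{x}^\top\pmb{c}$; combined with the trivial direction above, we obtain equality for this $\pmb{x}$, and as $\pmb{x}$ was arbitrary, $\cU'$ is sufficient.

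It then remains to reduce types (i), (ii), and (iii) to type (iv), after which the theorem follows in all cases. For (i) $\Rightarrow$ (iv), given $\pmb{c}\in\cU$ and $\pmb{x}\in\X$ I take the dominating $\pmb{c}'\ge\pmb{c}$ guaranteed by (i); since $\pmb{x}\in\rn_+$, multiplying this componentwise inequality by $\pmb{x}$ preserves it and gives $\pmb{x}^\top\pmb{c}'\ge\pmb{x}^\top\pmb{c}$. For (ii) $\Rightarrow$ (iv), I instantiate the direction $\pmb{\alpha}$ in (ii) with the feasible solution $\pmb{x}$ itself, which is admissible precisely because $\X\subseteq\rn_+$, so that $\pmb{x}$ is a legitimate choice of $\pmb{\alpha}$; this directly produces the required $\pmb{c}'$. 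For (iii) $\Rightarrow$ (iv), the single $\pmb{c}'$ furnished by (iii) works uniformly over all $\pmb{x}$, hence in particular for the given $\pmb{x}$, which is all that (iv) demands.

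I do not anticipate a genuine obstacle here, as the argument is essentially bookkeeping over the nesting of quantifiers. The only points requiring care are the two appeals to nonnegativity of $\pmb{x}$ — once to push a componentwise domination through the inner product in (i) $\Rightarrow$ (iv), and once to justify taking $\pmb{\alpha}=\pmb{x}$ in (ii) $\Rightarrow$ (iv) — both licensed by the standing assumption $\X\subseteq\rn_+$. The main structural insight is to recognize (iv) as the common weakest condition, so that the proof splits into one substantive implication plus three routine reductions rather than four separate sufficiency arguments.
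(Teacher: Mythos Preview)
Your proof is correct. The substantive content---the appeal to $\X\subseteq\rn_+$ to push componentwise domination through the inner product and to instantiate $\pmb{\alpha}=\pmb{x}$, plus the observation that $\cU'\subseteq\cU$ gives the easy inequality for free---matches the paper exactly.

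The organization differs slightly: the paper treats the four types in parallel, directly verifying sufficiency for each, whereas you factor through type (iv) by first proving (iv) $\Rightarrow$ sufficiency and then reducing (i), (ii), (iii) to (iv). Your structure is arguably cleaner, and in fact it anticipates the paper's next result (Theorem~\ref{th:implications}), which establishes precisely the implications (i) $\Rightarrow$ (iv), (ii) $\Rightarrow$ (iv), (iii) $\Rightarrow$ (iv) that you use here. The trade-off is minimal: the paper's version is more self-contained for this theorem alone, while yours avoids repetition once the hierarchy of types is on the table.
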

\begin{proof}
We need to show that
\[ \max_{\pmb{c}\in\cU'} \sum_{i\in[n]} c_i x_i = \max_{\pmb{c}\in\cU} \sum_{i\in[n]} c_i x_i \]
for all $\pmb{x}\in\X$. As $\cU'\subseteq\cU$, it already holds that $\max_{\pmb{c}\in\cU'} \sum_{i\in[n]} c_i x_i \le \max_{\pmb{c}\in\cU} \sum_{i\in[n]} c_i x_i$ for all $\pmb{x}\in\X$. So let us assume that some $\pmb{x}\in\X$ is given. Let $\pmb{c}(\pmb{x})\in\cU$ be a maximizer of $\max_{\pmb{c}\in\cU} \sum_{i\in[n]} c_i x_i$.
We consider each type separately.
\begin{itemize}
\item Type (i): By definition, there is some $\pmb{c}'\in\cU'$ such that $\pmb{c}' \ge \pmb{c}(\pmb{x})$. Hence, \[ \max_{\pmb{c}\in\cU'} \sum_{i\in[n]} c_i x_i \ge \sum_{i\in[n]} c'_ix_i \ge \sum_{i\in[n]} c_i(\pmb{x}) x_i = \max_{\pmb{c}\in\cU} \sum_{i\in[n]} c_i x_i. \]

\item Type (ii): By definition, there is $\pmb{c}'\in\cU'$ such that $\pmb{\alpha}^\top \pmb{c}' \ge \pmb{\alpha}^\top\pmb{c}$ for any $\pmb{\alpha}\in\rn_+$. Set $\pmb{\alpha} = \pmb{x}$, and the claim follows as before.

\item Type (iii): By definition, there is $\pmb{c}'\in\cU'$ such that $\hat{\pmb{x}}^\top\pmb{c}' \ge \hat{\pmb{x}}^\top \pmb{c}$ for all $\hat{\pmb{x}}\in\X$. Set $\hat{\pmb{x}} = \pmb{x}$, and the claim follows as before.

\item Type (iv). By definition, for all $\hat{\pmb{x}}\in\X$ and any $\hat{\pmb{c}}$ there is
$\pmb{c}'\in\cU'$ such that $\hat{\pmb{x}}^\top\pmb{c}' \ge \hat{\pmb{x}}^\top \hat{\pmb{c}}$. Set $\hat{\pmb{x}}=\pmb{x}$ and $\hat{\pmb{c}} = \pmb{c}(\pmb{x})$, and the claim follows as before.

\end{itemize}
\end{proof}

We consider a two-dimensional example for these definitions in Figure~\ref{fig:example}. Every point corresponds to one scenario. Here, let us assume that $\X=\{(1,0),(0,1)\}$. In this case, it can easily be verified that the definitions of type (i) and (iii) are equivalent.
Due to the structure of $\X$, we only need two scenarios to fulfill property (iv), while we need five scenarios for type (i).

\begin{figure}[htb]
\begin{center}
\includegraphics[width=0.6\textwidth]{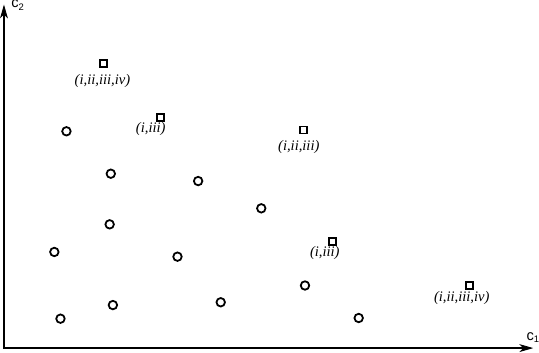}
\caption{Example for dominated scenarios.}\label{fig:example}
\end{center}
\end{figure}

While each of the four properties (i-iv) is sufficient to prove that $\cU'$ is a sufficient subset by Theorem~\ref{th:sufficient}, they may not be necessary. We compare their strength in the following result.

\begin{theorem}\label{th:implications}
A set of type (i) is also a set of type (ii) and (iii). Sets of type (ii) or (iii) are also sets of type (iv). The following diagram shows their relationships:
\begin{center}
\begin{tikzcd}[arrows=Rightarrow]
& (ii) \arrow[dr] \\
(i) \arrow[ur] \arrow[dr] & & (iv) \\
& (iii) \arrow[ur]
\end{tikzcd}
\end{center}
\end{theorem}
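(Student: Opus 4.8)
The plan is to verify each of the four implications shown in the diagram directly from Definition~\ref{def:types}, treating each as a separate short argument. The structure is entirely about quantifier manipulation: each sufficiency type is a statement of the form ``for all $\pmb{c}\in\cU$ (and possibly some extra universally quantified object), there exists $\pmb{c}'\in\cU'$ such that some inequality holds.'' The key observation driving every implication is that if a dominance inequality holds for a richer or more general class of test objects, it holds a fortiori for a narrower class obtained by specialization.

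First I would prove (i) $\Rightarrow$ (ii). Given $\pmb{c}\in\cU$ and an arbitrary $\pmb{\alpha}\in\rn_+$, type (i) supplies a $\pmb{c}'\in\cU'$ with $\pmb{c}'\ge\pmb{c}$ componentwise. Since $\pmb{\alpha}\ge\pmb{0}$, multiplying the componentwise inequality by the nonnegative weights and summing yields $\pmb{\alpha}^\top\pmb{c}'\ge\pmb{\alpha}^\top\pmb{c}$, which is exactly type (ii). The implication (i) $\Rightarrow$ (iii) is the same argument with $\pmb{\alpha}$ replaced by an arbitrary $\pmb{x}\in\X\subseteq\rn_+$: the single $\pmb{c}'$ produced by type (i) dominates $\pmb{c}$ in every component, hence satisfies $\pmb{x}^\top\pmb{c}'\ge\pmb{x}^\top\pmb{c}$ simultaneously for all $\pmb{x}\in\X$, matching the order of quantifiers in type (iii) (where $\pmb{c}'$ is chosen before $\pmb{x}$).

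Next I would prove (ii) $\Rightarrow$ (iv) and (iii) $\Rightarrow$ (iv), both of which amount to weakening the quantifier structure. For (ii) $\Rightarrow$ (iv): fix $\pmb{c}\in\cU$ and $\pmb{x}\in\X$; since $\X\subseteq\rn_+$, we may instantiate the universally quantified direction $\pmb{\alpha}$ in type (ii) to $\pmb{\alpha}=\pmb{x}$, obtaining a $\pmb{c}'\in\cU'$ with $\pmb{x}^\top\pmb{c}'\ge\pmb{x}^\top\pmb{c}$, which is type (iv). For (iii) $\Rightarrow$ (iv): type (iii) gives a single $\pmb{c}'$ that works for all $\pmb{x}\in\X$ at once, so in particular it works for the given $\pmb{x}$; we simply forget that the choice was uniform, which is precisely the weaker guarantee of type (iv) in which $\pmb{c}'$ is allowed to depend on $\pmb{x}$.

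I do not anticipate a genuine obstacle here, since each step is a one-line quantifier specialization and the nonnegativity assumptions $\X,\cU\subseteq\rn_+$ are exactly what make the weighted-sum arguments valid. The one point deserving care is the distinction in quantifier order between (iii) and (iv): in (iii) the existential $\pmb{c}'$ precedes the universal $\pmb{x}$, so the same $\pmb{c}'$ must serve every feasible solution, whereas in (iv) the order is reversed and $\pmb{c}'$ may be chosen per $\pmb{x}$. The implication goes in the direction (iii) $\Rightarrow$ (iv) precisely because a uniform witness is a stronger condition than a pointwise one; I would state this explicitly to make clear why the arrows run as drawn and not in reverse. No direct arrow between (ii) and (iii) is claimed, consistent with the fact that neither the direction $\pmb{\alpha}$ nor the solution set $\X$ specializes to the other in general.
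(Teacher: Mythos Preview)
Your proposal is correct and follows essentially the same approach as the paper: each of the four implications is verified by a direct quantifier specialization, using componentwise dominance plus nonnegativity for (i)$\Rightarrow$(ii) and (i)$\Rightarrow$(iii), instantiating $\pmb{\alpha}=\pmb{x}\in\X\subseteq\rn_+$ for (ii)$\Rightarrow$(iv), and weakening the uniform witness in (iii) to a pointwise one for (iii)$\Rightarrow$(iv). Your explicit discussion of the quantifier order distinction between (iii) and (iv) is a useful clarification but does not depart from the paper's argument.
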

\begin{proof}
Let any $\pmb{c}\in\cU$ be given.
\begin{itemize}
\item (i) $\Rightarrow$ (ii): By definition of (i), there is $\pmb{c}'\in\cU'$ such that $c'_i \ge c_i$ for all $i\in[n]$. This means that for any $\pmb{\alpha}\in\rn_+$, we have $\sum_{i\in[n]} \alpha_i c'_i \ge \sum_{i\in[n]} \alpha_i c_i$ as claimed.

\item (i) $\Rightarrow$ (iii): By definition of (i), there is $\pmb{c}'\in\cU'$ such that $c'_i \ge c_i$ for all $i\in[n]$. This means that for any $\pmb{x}\in\X\subseteq\rn_+$, we have $\sum_{i\in[n]} x_i c'_i \ge \sum_{i\in[n]} x_i c_i$ as claimed.

\item (ii) $\Rightarrow$ (iv): As $\X\subseteq\mathbb{R}^n_+$, it follows directly that the condition of type (ii) is stronger than the condition of type (iv).

\item (iii) $\Rightarrow$ (iv): Let some $\pmb{c}\in\cU$ be given. By definition of (iii), there is some $\pmb{c}'\in\cU'$ such that $\pmb{x}^\top\pmb{c}' \ge \pmb{x}^\top\pmb{c}$ for all $\pmb{x}\in\X$. This means that For any given $\pmb{x}\in\X$, we can use $\pmb{c}'$ to fulfill condition (iv).
\end{itemize}
\end{proof}

The implications (ii) $\Rightarrow$ (iii), (iii) $\Rightarrow$ (ii), and (iv) $\Rightarrow$ (i) do not hold in general.

\begin{cor}
Let $\cU_{(i)}$, $\cU_{(ii)}$, $\cU_{(iii)}$, and $\cU_{(iv)}$ be sets of type $(i)$, $(ii)$, $(iii)$, and $(iv)$, respectively, with minimum cardinality. Then it holds that $|\cU_{(i)}| \ge |\cU_{(ii)}| \ge |\cU_{(iv)}|$ and $|\cU_{(i)}| \ge |\cU_{(iii)}| \ge |\cU_{(iv)}|$.
\end{cor}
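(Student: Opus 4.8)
The plan is to derive the cardinality inequalities directly from the implication diagram in Theorem~\ref{th:implications}. The key observation is that if every set of type $(a)$ is automatically a set of type $(b)$, then in particular a \emph{minimum-cardinality} set of type $(a)$ is a valid set of type $(b)$, and hence the minimum cardinality achievable with type $(b)$ can only be smaller or equal. I would make this monotonicity principle explicit as the backbone of the argument.

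First I would record the trivial but essential fact that for each property, a minimum-cardinality set exists (the properties are defined over subsets $\cU' \subseteq \cU$ of a finite set $\cU$, so there are finitely many candidates, and the empty set is excluded only when some scenario must be dominated). Then I would argue each inequality in turn. For $|\cU_{(i)}| \ge |\cU_{(ii)}|$: by Theorem~\ref{th:implications}, the set $\cU_{(i)}$ is itself of type $(ii)$; since $\cU_{(ii)}$ is by definition a type-$(ii)$ set of minimum cardinality, we get $|\cU_{(ii)}| \le |\cU_{(i)}|$. The inequality $|\cU_{(ii)}| \ge |\cU_{(iv)}|$ follows identically from the implication $(ii) \Rightarrow (iv)$: the minimizer $\cU_{(ii)}$ is a type-$(iv)$ set, so $|\cU_{(iv)}| \le |\cU_{(ii)}|$. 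The two remaining inequalities $|\cU_{(i)}| \ge |\cU_{(iii)}|$ and $|\cU_{(iii)}| \ge |\cU_{(iv)}|$ are proved in exactly the same way, using the arrows $(i) \Rightarrow (iii)$ and $(iii) \Rightarrow (iv)$ respectively. Chaining these gives both displayed chains $|\cU_{(i)}| \ge |\cU_{(ii)}| \ge |\cU_{(iv)}|$ and $|\cU_{(i)}| \ge |\cU_{(iii)}| \ge |\cU_{(iv)}|$.

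There is essentially no hard step here; the corollary is a formal consequence of the implication structure, and the only thing to be careful about is the direction of the inequality. The intuition worth stating is that a \emph{stronger} requirement on $\cU'$ (type $(i)$ is the most demanding) forces $\cU'$ to be \emph{larger}, whereas a weaker requirement (type $(iv)$, the least demanding) permits a \emph{smaller} set; the implication arrows point from stronger to weaker properties, and the cardinalities move in the opposite direction. I would phrase the whole proof as a single reusable sentence ``if property $(a)$ implies property $(b)$, then any minimum-cardinality set of type $(a)$ is a feasible type-$(b)$ set, hence $|\cU_{(b)}| \le |\cU_{(a)}|$'' and then instantiate it four times along the four arrows of the diagram.
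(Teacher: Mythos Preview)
Your argument is correct and is exactly the intended reasoning: the paper states this result as a corollary without proof, treating it as an immediate consequence of the implication diagram in Theorem~\ref{th:implications}. Your monotonicity principle (``if property $(a)$ implies property $(b)$, then a minimum-cardinality type-$(a)$ set is feasible for $(b)$, hence $|\cU_{(b)}|\le|\cU_{(a)}|$'') instantiated along the four arrows is precisely what the authors have in mind.
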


Another property of these criteria is that as the feasible set becomes smaller, the dominance types become stronger. That is, $\cU'$ is type (iii) (type (iv)) with respect to $\X_{1}$ implies $\cU'$ is type (iii) (type (iv)) with respect to $\X_{2}$, when $\X_{2} \subseteq \X_{1}$.

\subsection{Alternative Characterization of Sufficiency Criteria}
\label{sec:altern-defin-type-1}

We now provide alternative characterizations for some of the sufficiency type (iii) from Definition~\ref{def:types}.
This will provide further insights into this sufficiency type and will be useful for the optimization formulations given in the following sections.

We start with type (iii) which can be expressed in terms of a conic ordering.
Recall that for a pointed convex cone $\mathcal{K}$, a partial ordering and strict partial ordering can be defined as follows:
\begin{align*}
  \pmb{c}_1 &\preceq_K \pmb{c}_2 \Leftrightarrow \pmb{c}_2 - \pmb{c}_1 \in \mathcal{K}.
\end{align*}
A special case of this is when $\mathcal{K} = \rn_{+}$ for which the conic ordering is equivalent to the standard component-wise ordering, that is:
\begin{equation}
  \label{eq:conic-standard}
  \pmb{c}_{1} \preceq_{\rn_{+}} \pmb{c}_{2} \Leftrightarrow \pmb{c}_{1} \leq \pmb{c}_{2}.
\end{equation}
Also, for a given cone $\mathcal{K}\subseteq\rn$, its dual is defined as follows:
\begin{equation*}
\mathcal{K}^* = \{ \pmb{u}\in\rn : \forall \pmb{c}\in \mathcal{K}\ \pmb{c}^\top \pmb{u} \ge 0\},
\end{equation*}
For example, if $\mathcal{K}=\mathbb{R}^n_+$, then $\mathcal{K}^*=\mathcal{K}$.

The following proposition now gives the alternative characterization for type (iii):

\begin{proposition}
  \label{prop:conic-type-iii}
  Suppose $\cU' \subseteq \cU$, and let $\mathcal{K} = \conic{\X}$. Then $\cU'$ is type (iii) sufficient if and only if for all $\pmb{c}\in\cU$ there exists $\pmb{c}'\in\cU'$ such that $\pmb{c}' \succeq_{\mathcal{K}^*} \pmb{c}$.
\end{proposition}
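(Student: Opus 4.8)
The plan is to strip off the outer quantifier $\forall \pmb{c}\in\cU$ that both sides of the equivalence share, and reduce the whole statement to a single pointwise fact about one pair $(\pmb{c},\pmb{c}')$. So I would fix $\pmb{c}\in\cU$ and an arbitrary candidate $\pmb{c}'\in\cU'$, and set $\pmb{d}=\pmb{c}'-\pmb{c}$. The inner condition in type (iii), namely $\pmb{x}^\top\pmb{c}'\ge\pmb{x}^\top\pmb{c}$ for all $\pmb{x}\in\X$, is immediately equivalent to $\pmb{x}^\top\pmb{d}\ge 0$ for all $\pmb{x}\in\X$, whereas the conic condition $\pmb{c}'\succeq_{\mathcal{K}^*}\pmb{c}$ is by definition $\pmb{d}\in\mathcal{K}^*$. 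Thus everything hinges on comparing these two statements about $\pmb{d}$.

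The key step is to show, for fixed $\pmb{d}$, that
\[
\pmb{x}^\top\pmb{d}\ge 0 \text{ for all } \pmb{x}\in\X \quad\Longleftrightarrow\quad \pmb{d}\in\mathcal{K}^*, \qquad \mathcal{K}=\conic{\X}.
\]
For ``$\Leftarrow$'', recall that $\pmb{d}\in\mathcal{K}^*$ means $\pmb{k}^\top\pmb{d}\ge 0$ for every $\pmb{k}\in\mathcal{K}$; since $\X\subseteq\conic{\X}=\mathcal{K}$, restricting to $\pmb{k}=\pmb{x}\in\X$ gives $\pmb{x}^\top\pmb{d}\ge 0$ directly. For ``$\Rightarrow$'', I would use that every element of the conic hull is a finite nonnegative combination $\pmb{k}=\sum_i\lambda_i\pmb{x}^i$ with $\lambda_i\ge 0$ and $\pmb{x}^i\in\X$; then $\pmb{k}^\top\pmb{d}=\sum_i\lambda_i\,(\pmb{x}^i)^\top\pmb{d}\ge 0$, because each summand is a nonnegative scalar $\lambda_i$ times the nonnegative quantity $(\pmb{x}^i)^\top\pmb{d}$. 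Hence $\pmb{d}\in\mathcal{K}^*$.

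With this pointwise equivalence established, the conclusion follows by re-attaching the quantifiers. For each fixed $\pmb{c}\in\cU$, the existence of some $\pmb{c}'\in\cU'$ with $\pmb{x}^\top\pmb{c}'\ge\pmb{x}^\top\pmb{c}$ for all $\pmb{x}\in\X$ is equivalent to the existence of some $\pmb{c}'\in\cU'$ with $\pmb{c}'\succeq_{\mathcal{K}^*}\pmb{c}$, since the displayed equivalence holds for each candidate $\pmb{c}'$ individually and the existential over $\pmb{c}'\in\cU'$ is applied to two logically equivalent predicates. Taking the conjunction of this statement over all $\pmb{c}\in\cU$ turns the left-hand side into the defining condition of type (iii) and the right-hand side into the condition stated in the proposition.

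I do not expect a genuine obstacle here; the only mildly substantive point is the ``$\Rightarrow$'' direction of the key step, where the a priori infinite family of inequalities defining membership in $\mathcal{K}^*$ must be seen to collapse to the generator inequalities indexed by $\X$. This reduction is valid precisely because conic combinations use \emph{nonnegative} coefficients, so no sign can flip when passing from the generators to an arbitrary element of $\conic{\X}$; note that this argument uses only the conic structure and not the ambient nonnegativity $\X\subseteq\rn_+$.
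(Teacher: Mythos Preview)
Your proposal is correct and follows essentially the same approach as the paper: both reduce the statement to the pointwise equivalence $\pmb{x}^\top\pmb{d}\ge 0$ for all $\pmb{x}\in\X$ iff $\pmb{d}\in\mathcal{K}^*$, and both prove the nontrivial direction by writing an arbitrary $\pmb{k}\in\conic{\X}$ as a finite nonnegative combination of elements of $\X$. The only cosmetic difference is that the paper proves the easy direction by contrapositive whereas you argue it directly via $\X\subseteq\mathcal{K}$, and you are slightly more explicit about re-attaching the outer quantifiers.
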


\begin{proof}
  It suffices to show that $\pmb{c}_{1}^\top \pmb{x} \leq \pmb{c}_{2}^\top \pmb{x}\ \forall \pmb{x}\in\X \Longleftrightarrow \pmb{c}_{1} \preceq_{\mathcal{K}^{*}} \pmb{c}_{2}$.

Suppose first that $\pmb{c}_{1}^\top \pmb{x} \leq \pmb{c}_{2}^\top \pmb{x}\ \forall \pmb{x}\in\X$.
Then,
\begin{equation}
  \label{eq:ineq-conic-prop}
  (\pmb{c}_{2} - \pmb{c}_{1})^\top \pmb{x} \geq 0 \text{ for all } \pmb{x}\in\X.
\end{equation}
For any $\pmb{y}\in\conic{\X}$, we have $\pmb{y} = \lambda_{1} \pmb{x}_{1} + \ldots + \lambda_{k}\pmb{x}_{k}$ where $\pmb{x}_{i}\in\X$ for $i=1,\ldots,k$ and $\lambda \geq 0$. Now,
\begin{equation*}
  (\pmb{c}_{2} - \pmb{c}_{1})^\top \pmb{y} = \sum_{i=1}^{k} \lambda_{i} \underbrace{(\pmb{c}_{2} - \pmb{c}_{1})^\top \pmb{x}_{i}}_{\geq 0 \text{ by } \eqref{eq:ineq-conic-prop}} \geq 0.
\end{equation*}
Hence $\pmb{c}_1 \preceq_{\mathcal{K}^{*}} \pmb{c}_{2}$.

On the other hand, suppose that $\pmb{c}_{1}^\top \pmb{x} > \pmb{c}_{2}^\top \pmb{x}$ for some $\pmb{x}\in\X$.
Then, $(\pmb{c}_{2}-\pmb{c}_{1})^\top \pmb{x} < 0$ which implies that $\pmb{c}_{1}\npreceq_{\mathcal{K}^{*}} \pmb{c}_{2}$ as required.

\end{proof}

We illustrate in Figure~\ref{fig:ill-example} minimal sufficient sets of type (i), (ii), (iii) and (iv) for a randomly sampled two-dimensional set of 30 scenarios $\mathcal{U}$ and where the feasible set of solutions has conic hull $\mathcal{K}$.
We see that the type (i) sufficient set, which contains 8 scenarios, contains all other minimal sufficient sets, and is the weakest type of sufficient set.
The type (ii) set contains 6 points, excluding the two points in the type (i) sufficient set which are not supported efficient.
For type (iii), note that a scenario $\pmb{x}$ only belongs to the minimal sufficient set if there is no other scenario $\pmb{y}$ for which we have $\pmb{y} - \pmb{x} \in \mathcal{K}^{*}$.
The type (iii) minimally sufficient set contains 4 scenarios, but note that although it is smaller than the type (ii) set, it is not contained in it, as it contains one non-supported efficient point which is not in the type (ii) set.
Finally, we can see that the type (iv) minimally sufficient set contains only 2 scenarios, and is subset of the type (iii) set excluding the two scenarios which are not supported efficient with respect to $\mathcal{K}$.
This figure illustrates the hierarchy of sufficient set types stated in Theoerem~\ref{th:implications}.

\begin{figure}[h]
  \centering
  \includegraphics[width=0.8\textwidth]{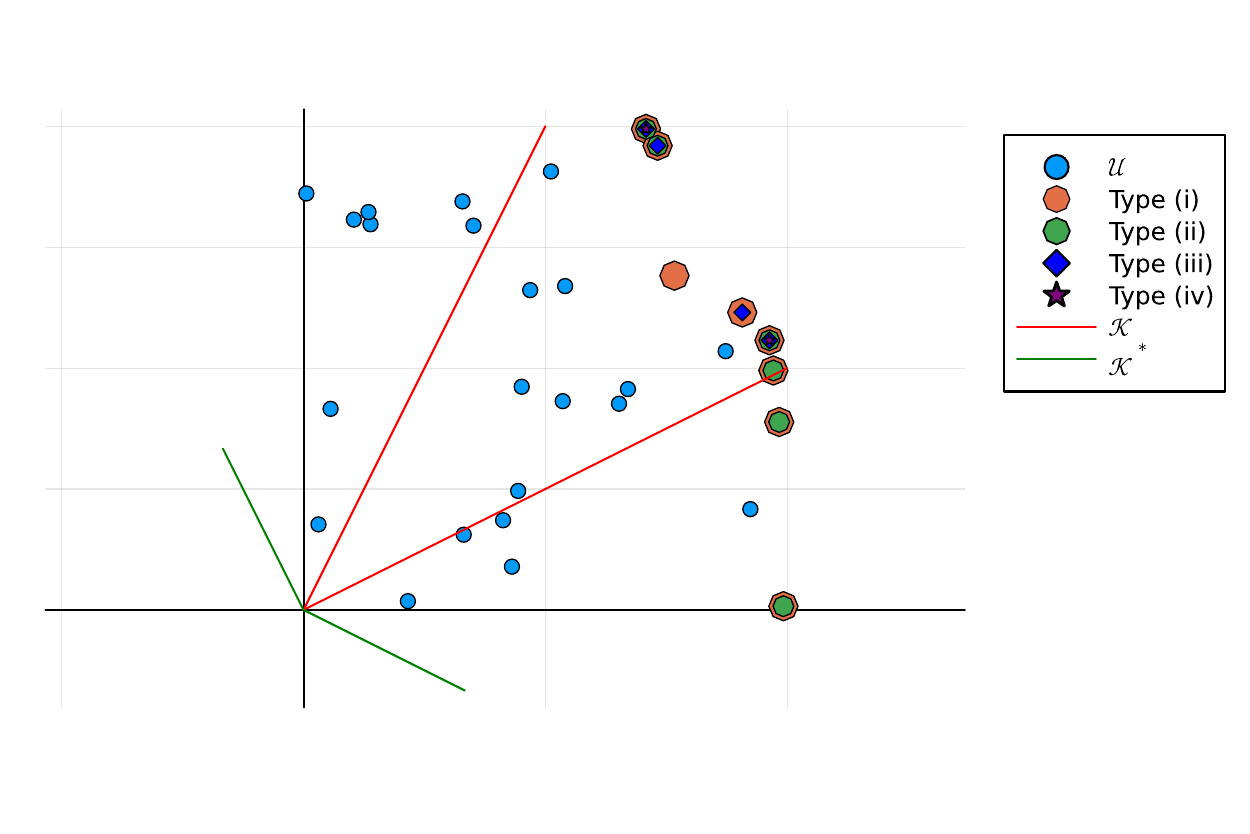}
  \caption{Illustration of different minimal sufficient sets for a two-dimension uncertainty set.}
  \label{fig:ill-example}
\end{figure}

For many problems, the feasible region $\X$ can be expressed as follows:
\begin{equation*}
  \label{eq:lp-x}
\X = \{\pmb{x} \in \rn_+: \exists \pmb{y}\in\mathbb{R}^p \ \text{s.t. } A\pmb{x} + B\pmb{y} \geq \pmb{b}\}.
\end{equation*}
where $A\in\mathbb{R}^{m\times n}$ and $B\in\mathbb{R}^{m \times p}$.
That is, $\X$ is the feasible region of a linear program.
The conic hull of this set is then given by:
\begin{equation*}
  \conic{\X} = \{\pmb{x} \in \rn_+ : \exists \pmb{y}\in\mathbb{R}^p, \lambda \ge 0 \ \text{s.t. } A\pmb{x} + B \pmb{y} = \lambda \pmb{b} \}
\end{equation*}
The following result gives the dual of this cone.
\begin{proposition}\label{prop:coneLP}
  Suppose we have the convex cone $\mathcal{K} = \{\pmb{x} \in \rn_+: \exists \pmb{y}\in\mathbb{R}^p, \lambda \ge 0 \ \text{s.t. } A\pmb{x} + B \pmb{y} = \lambda \pmb{b} \}$.
  Then, its dual cone $\mathcal{K}^{*}$ is defined as follows:
  \begin{equation*}
   \mathcal{K}^{*} = \{\pmb{\pi}\in \rn: \exists \pmb{z} \ge \pmb{0}\ \text{s.t. } \pmb{\pi} \geq  A^\top \pmb{z},\ B^\top \pmb{z} = \pmb{0},\  \pmb{b}^\top \pmb{z} \geq 0 \}
  \end{equation*}
\end{proposition}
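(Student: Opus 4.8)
The plan is to read off $\mathcal{K}^{*}$ from linear programming duality. By definition, $\pmb{\pi}\in\mathcal{K}^{*}$ precisely when $\pmb{\pi}^\top\pmb{x}\ge 0$ for every $\pmb{x}\in\mathcal{K}$. Since $\mathcal{K}$ is a convex cone containing the origin, the quantity $\min\{\pmb{\pi}^\top\pmb{x}:\pmb{x}\in\mathcal{K}\}$ equals either $0$ (attained at $\pmb{x}=\pmb{0}$) or $-\infty$; hence $\pmb{\pi}\in\mathcal{K}^{*}$ if and only if this minimization is bounded below. So the first step is to reduce the membership question $\pmb{\pi}\in\mathcal{K}^{*}$ to a boundedness question for a linear program.

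Next I would write this minimization explicitly, using the conic-hull description of $\mathcal{K}$ as $\{\pmb{x}\in\rn_+:\exists\,\pmb{y},\lambda\ge 0\ \text{s.t. } A\pmb{x}+B\pmb{y}\ge\lambda\pmb{b}\}$ (the inequality form, which is what arises when $\X$ is the $\ge\pmb{b}$ feasible region and is what produces the sign restriction $\pmb{z}\ge\pmb{0}$ below), giving
\[
\min_{\pmb{x}\ge\pmb{0},\ \pmb{y},\ \lambda\ge 0}\ \pmb{\pi}^\top\pmb{x}\quad\text{s.t.}\quad A\pmb{x}+B\pmb{y}-\lambda\pmb{b}\ge\pmb{0},
\]
with $\pmb{y}$ free. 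This program is always feasible, as $(\pmb{x},\pmb{y},\lambda)=(\pmb{0},\pmb{0},0)$ satisfies every constraint with objective value $0$. I would then form its dual, introducing a multiplier $\pmb{z}$ for the constraint $A\pmb{x}+B\pmb{y}-\lambda\pmb{b}\ge\pmb{0}$. Because the right-hand side of every primal constraint is $\pmb{0}$, the dual objective is identically $0$ and the dual is a pure feasibility problem. Reading off the dual conditions componentwise yields $\pmb{z}\ge\pmb{0}$ (from the $\ge$ direction of the defining inequality), $A^\top\pmb{z}\le\pmb{\pi}$ (the dual constraint attached to the sign-restricted variables $\pmb{x}\ge\pmb{0}$), $B^\top\pmb{z}=\pmb{0}$ (attached to the free variable $\pmb{y}$), and $\pmb{b}^\top\pmb{z}\ge 0$ (attached to $\lambda\ge 0$). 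These are exactly the conditions defining the set on the right-hand side of the claim.

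Finally I would invoke strong LP duality: since the primal is feasible, it is bounded below if and only if the dual is feasible (and then both optima equal $0$), whereas an infeasible dual forces the primal to be unbounded below. Combining this with the reduction above, $\pmb{\pi}\in\mathcal{K}^{*}$ holds if and only if there exists $\pmb{z}\ge\pmb{0}$ with $\pmb{\pi}\ge A^\top\pmb{z}$, $B^\top\pmb{z}=\pmb{0}$, and $\pmb{b}^\top\pmb{z}\ge 0$, which is precisely the stated description of $\mathcal{K}^{*}$.

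The main obstacle is bookkeeping rather than conceptual: one must set up the correct primal from the projected description of $\mathcal{K}$ and then track, through the min/max dualization, the sign of each dual variable and the orientation of each dual constraint — in particular that the free variable $\pmb{y}$ produces the equality $B^\top\pmb{z}=\pmb{0}$ while the sign-restricted $\pmb{x}$ and $\lambda$ produce the inequalities $A^\top\pmb{z}\le\pmb{\pi}$ and $\pmb{b}^\top\pmb{z}\ge 0$. An equivalent route would apply a Farkas/Motzkin theorem of the alternative directly to the system $\{A\pmb{x}+B\pmb{y}\ge\lambda\pmb{b},\ \pmb{x}\ge\pmb{0},\ \lambda\ge 0,\ \pmb{\pi}^\top\pmb{x}<0\}$, certifying its infeasibility by exactly these multipliers; I would nonetheless keep the LP-duality argument as the primary proof for transparency.
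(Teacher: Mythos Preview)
Your proposal is correct and follows essentially the same route as the paper: write $\pmb{\pi}\in\mathcal{K}^{*}$ as the condition that a primal LP over the lifted description of $\mathcal{K}$ has optimal value $\ge 0$, dualize, and read off the feasibility conditions on $\pmb{z}$. You are in fact more careful than the paper in two respects: you explicitly use the inequality description $A\pmb{x}+B\pmb{y}\ge\lambda\pmb{b}$ of the conic hull (which is what actually yields $\pmb{z}\ge\pmb{0}$), and you spell out the strong-duality step (primal feasible at the origin, hence bounded iff the dual is feasible) that the paper's chain of equalities leaves implicit.
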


\begin{proof}
\begin{align*}
\mathcal{K}^{*} &= \left\{\pmb{\pi} \in \rn: \pmb{x}^\top \pmb{\pi} \geq 0\ \forall \pmb{x} \in \mathcal{K}\right\} \\
&=\left\{\pmb{\pi} \in \rn: \min_{\pmb{x},\pmb{y},\lambda}\{\pmb{x}^\top \pmb{\pi} : A\pmb{x} + B\pmb{y} - \lambda \pmb{b} \leq \pmb{0},\ \lambda\geq 0,\ \pmb{x} \geq \pmb{0}\} \geq 0 \right\}\\
&=\left\{\pmb{\pi} \in \rn : \max_{\pmb{z}}\{0 : A^\top\pmb{z} \leq \pmb{\pi},\ B^\top \pmb{z} = \pmb{0},\ \pmb{b}^\top \pmb{z} \geq 0, \pmb{z}\geq \pmb{0}\} \geq 0 \right\}\\
&= \left\{\pmb{\pi} \in \rn: \pmb{\pi} \geq A^\top\pmb{z},\ B^\top \pmb{z} = \pmb{0},\ \pmb{b}^\top \pmb{z} \geq 0,\ \pmb{z}\geq \pmb{0} \right\}
\end{align*}
\end{proof}

As a consequence of this result, for $\X$ as defined in \eqref{eq:lp-x}, we have the following equivalence:
\begin{align*}
  \pmb{c}_{1}^\top \pmb{x} \geq \pmb{c}_{2}^\top \pmb{x} \qquad \forall \pmb{x}\in \X &\Longleftrightarrow \pmb{c}_{1} - \pmb{c}_{2} \succeq_{\mathcal{K}^{*}} \pmb{0}\\
  &\Longleftrightarrow \pmb{c}_{1} - \pmb{c}_{2} \geq  A^\top \pmb{z},\ \text{where } B^\top \pmb{z} = \pmb{0},\ \pmb{b}^\top \pmb{z} \geq 0,\ \pmb{z}\geq \pmb{0}.
\end{align*}

\subsection{Finding Reduced Scenario Sets}
\label{sec:exact-reduction}
In the following, we derive optimization models to find smallest possible sets of type (i) to (iv). We begin with type (i) and show that it is possible to find such a set using an integer linear program that is similar to what has been proposed in \cite{goerigk2023optimal}.

\begin{theorem}\label{th:typei-subset}
Let $(\pmb{\lambda},\pmb{\mu})$ be an optimal solution of the following integer linear program:
\begin{align}
\min\ & \sum_{k\in[N]} \lambda_k \label{tired-1}\\
\text{s.t. } & c^j_i \le \sum_{j\in[N]} \mu_{jk} c^k_i & \forall j\in[N], i\in[n] \label{tired-2}\\
& \mu_{jk} \le \lambda_k & \forall j,k\in[N] \label{tired-3}\\
& \sum_{k\in[N]} \mu_{jk} = 1 & \forall j\in[N] \label{tired-4}\\
& \mu_{jk} \in \{0,1\} & \forall j,k\in[N] \label{tired-5}\\
& \lambda_k \in \{0,1\} & \forall k\in[N]\label{tired-6}
\end{align}
Then, $\cU' = \{ k\in[N] : \lambda_k = 1\}\subseteq \cU$ is a smallest sufficient subset of type (i).
\end{theorem}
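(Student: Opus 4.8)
The plan is to show that the feasible solutions of the integer program \eqref{tired-1}--\eqref{tired-6} are in exact correspondence with the type (i) sufficient subsets of $\cU$, with the objective value $\sum_{k} \lambda_k$ matching the cardinality of the corresponding subset. Reading off the model, $\lambda_k$ indicates whether $\pmb{c}^k$ is placed into $\cU'$, while $\mu_{jk}$ records which selected scenario is used to dominate $\pmb{c}^j$. Constraint \eqref{tired-4} forces each $j$ to be assigned to exactly one index, so the sum in \eqref{tired-2} collapses to a single term $c^k_i$; constraint \eqref{tired-2} then reads precisely as the componentwise domination $\pmb{c}^k \ge \pmb{c}^j$, and \eqref{tired-3} guarantees that this dominating scenario is actually selected. (I read the summation index in \eqref{tired-2} as $k$ rather than $j$, the latter being a typo, since $j$ is already the free index of the constraint.)

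First I would establish soundness. Given an optimal $(\pmb{\lambda},\pmb{\mu})$, set $\cU'=\{\pmb{c}^k : \lambda_k=1\}$, which is a subset of $\cU$ by construction. For any $\pmb{c}^j\in\cU$, constraint \eqref{tired-4} yields a unique index $k(j)$ with $\mu_{j,k(j)}=1$; then \eqref{tired-3} gives $\lambda_{k(j)}=1$, so $\pmb{c}^{k(j)}\in\cU'$, and \eqref{tired-2} gives $\pmb{c}^{k(j)}\ge \pmb{c}^j$. This is exactly the defining property of type (i) from Definition~\ref{def:types}, so $\cU'$ is a type (i) sufficient subset, and hence a sufficient subset by Theorem~\ref{th:sufficient}. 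Since $\pmb{\lambda}$ is binary, $|\cU'|=\sum_{k}\lambda_k$ equals the optimal objective value.

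Next I would prove minimality by the reverse construction. Given any type (i) sufficient subset $\tilde{\cU}\subseteq\cU$, set $\lambda_k=1$ iff $\pmb{c}^k\in\tilde{\cU}$, and for each $j$ choose one dominating index $k(j)$ (which exists by the type (i) property) and put $\mu_{j,k(j)}=1$ with all other $\mu_{jk}=0$. A direct check shows \eqref{tired-2}--\eqref{tired-6} all hold, so this is feasible with objective $|\tilde{\cU}|$. Hence the optimal objective is at most the minimum cardinality of a type (i) subset. Combining with soundness, the set $\cU'$ from an optimal solution is itself a type (i) subset whose cardinality equals the optimal objective, which is therefore exactly the minimum; so $\cU'$ is a smallest sufficient subset of type (i).

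There is no deep obstacle here; the argument is a standard verification that the integer program models the combinatorial problem exactly. The only points requiring care are the two-directional cardinality matching --- immediate in the forward direction because $\pmb{\lambda}$ is binary, and non-inflating in the backward direction because each $k(j)$ is chosen only among already-selected scenarios --- together with the correct reading of the collapsed summation in \eqref{tired-2} so that the single active assignment reproduces the componentwise domination.
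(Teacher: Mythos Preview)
Your proposal is correct and follows the same approach as the paper: interpreting $\pmb{\lambda}$ as the indicator of $\cU'$ and $\pmb{\mu}$ as the assignment of each scenario to a dominating scenario in $\cU'$, so that feasibility of the IP is equivalent to $\cU'$ being type~(i) sufficient. Your write-up is in fact more explicit than the paper's, which only sketches the role of each constraint without spelling out the two directions (soundness and minimality) separately; your observation about the typo in the summation index of \eqref{tired-2} is also correct.
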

\begin{proof}
A set $\cU'\subseteq \cU$ is a sufficient subset of type (i) if and only if for all $\pmb{c}\in\cU$ there is a $\pmb{c}'\in\cU'$ such that $\pmb{c}'\ge\pmb{c}$. Variables $\pmb{\lambda}$ model if a scenario is contained in $\cU'$ or not. Variables $\pmb{\mu}$ model if scenario $\pmb{c}^j\in\cU$ is dominated by scenario $\pmb{c}^k\in\cU'$. By constraints~\eqref{tired-4}, we need to assign exactly one such scenario $\pmb{c}^k$ per scenario $\pmb{c}^j$. Constraints~\eqref{tired-3} ensure that a scenario can only be used for this purpose if it is contained in $\cU'$. Finally, constraints~\eqref{tired-2} model that scenario $\pmb{c}^j$ is indeed dominated by scenario $\pmb{c}^k$ in each component $i\in[n]$, if $\mu_{jk}=1$.
\end{proof}

While Theorem~\ref{th:typei-subset} provides a way to find an sufficient subset that is the smallest amongst all sets of type (i) through an integer program, there is a natural polynomial-time algorithm for this problem. A scenario $\pmb{c}\in\cU$ Pareto-dominates a scenario $\pmb{c}'\in\cU$ if $c_i \ge c'_i$ for all $i\in[n]$ and $c_i > c'_i$ for at least one $i\in[n]$. A scenario that is not Pareto-dominated by another scenario is called  Pareto efficient, see, e.g., \cite{ehrgott2005multicriteria}. We can determine if a given scenario is Pareto efficient by comparing it against the remaining $N-1$ scenarios in $\cU$. By performing such pairwise comparisons, we identify the set of all Pareto efficient solutions in polynomial time. By definition, this is the smallest subset of type (i), which means that the problem is polynomial-time solvable. However, the integer program provided in Theorem~\ref{th:typei-subset} provides a starting point to model the further types of dominance.

\begin{theorem}\label{th:typeii-subset}
Let $(\pmb{\lambda},\pmb{\mu})$ be an optimal solution of the following mixed-integer linear program:
\begin{align}
\min\ & \sum_{k\in[N]} \lambda_k \label{tiired-1}\\
\text{s.t. } & \eqref{tired-2}, \eqref{tired-3}, \eqref{tired-4}, \eqref{tired-6} \nonumber\\
& \mu_{jk} \in [0,1] & \forall j,k\in[N] \label{tiired-5}
\end{align}
Then, $\cU' = \{ k\in[N] : \lambda_k = 1\}\subseteq \cU$ is a smallest sufficient subset of type (ii).
\end{theorem}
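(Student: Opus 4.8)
The plan is to follow the proof of Theorem~\ref{th:typei-subset} and isolate what the single change---relaxing $\mu_{jk}$ from $\{0,1\}$ to $[0,1]$ in \eqref{tiired-5}---does to the feasible region. First I would read off what a feasible $(\pmb\lambda,\pmb\mu)$ means. Writing $\cU'=\{k\in[N]:\lambda_k=1\}$, constraints \eqref{tired-3} together with $\mu_{jk}\ge 0$ force $\mu_{jk}=0$ whenever $k\notin\cU'$, while \eqref{tired-4} gives $\sum_k\mu_{jk}=1$. Hence for each $j$ the tuple $(\mu_{jk})_k$ is a vector of convex-combination weights supported on $\cU'$, and \eqref{tired-2} says precisely that the point $\pmb d^j:=\sum_{k\in\cU'}\mu_{jk}\pmb c^k\in\conv{\cU'}$ dominates $\pmb c^j$ componentwise. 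So the model admits a feasible $\pmb\mu$ for a given $\pmb\lambda$ if and only if every $\pmb c^j\in\cU$ is dominated componentwise by some point of $\conv{\cU'}$. The theorem then reduces to the geometric equivalence: $\cU'$ is type (ii) sufficient if and only if for each $\pmb c\in\cU$ there is $\pmb d\in\conv{\cU'}$ with $\pmb d\ge\pmb c$.

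I would prove this equivalence one scenario $\pmb c$ at a time. By Definition~\ref{def:types}, type (ii) for $\pmb c$ reads $\max_{\pmb c'\in\cU'}\pmb\alpha^\top\pmb c'\ge\pmb\alpha^\top\pmb c$ for all $\pmb\alpha\in\rn_+$. The forward direction is immediate: if $\pmb d\in\conv{\cU'}$ satisfies $\pmb d\ge\pmb c$, then for any $\pmb\alpha\ge\pmb 0$ we get $\pmb\alpha^\top\pmb c\le\pmb\alpha^\top\pmb d\le\max_{\pmb c'\in\cU'}\pmb\alpha^\top\pmb c'$, using that a linear functional attains the same maximum over $\conv{\cU'}$ as over the finite set $\cU'$. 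For the reverse direction I would argue the contrapositive by separation. If no $\pmb d\in\conv{\cU'}$ dominates $\pmb c$, then $\pmb c\notin K$, where $K:=\conv{\cU'}-\rn_+$ is convex and closed (a Minkowski sum of a compact set and a closed cone); the separating hyperplane theorem then yields $\pmb\alpha$ and a scalar $\gamma$ with $\pmb\alpha^\top\pmb c>\gamma\ge\pmb\alpha^\top\pmb w$ for all $\pmb w\in K$.

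The main obstacle is forcing this separating normal to be nonnegative, so that it becomes a legitimate witness against type (ii); this is exactly where the downward closure built into $K$ is used. Since $\pmb d-\pmb s\in K$ for every $\pmb d\in\conv{\cU'}$ and every $\pmb s\ge\pmb 0$, boundedness of $\pmb\alpha^\top\pmb w$ over $K$ forces $\pmb\alpha\ge\pmb 0$: if some $\alpha_i<0$, sending $s_i\to\infty$ makes $\pmb\alpha^\top\pmb w$ unbounded above, a contradiction. With $\pmb\alpha\ge\pmb 0$ secured, taking $\pmb s=\pmb 0$ gives $\max_{\pmb c'\in\cU'}\pmb\alpha^\top\pmb c'=\max_{\pmb d\in\conv{\cU'}}\pmb\alpha^\top\pmb d\le\gamma<\pmb\alpha^\top\pmb c$, so this $\pmb\alpha$ violates type (ii), completing the contrapositive. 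Alternatively, the same equivalence can be obtained from LP duality, or a Farkas-type theorem of the alternative applied to the feasibility system $\{\pmb\mu\ge\pmb 0,\ \sum_k\mu_k=1,\ \sum_k\mu_k\pmb c^k\ge\pmb c\}$, in the spirit of the proof of Proposition~\ref{prop:coneLP}.

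Finally I would assemble the pieces: combining the reading of the model with this equivalence shows that the feasible values of $\pmb\lambda$ are exactly the indicator vectors of type (ii) sufficient subsets, and $\lambda_k\in\{0,1\}$ guarantees $\cU'\subseteq\cU$. Since the objective \eqref{tiired-1} equals $|\cU'|$, any optimal $\pmb\lambda$ encodes a type (ii) sufficient subset of minimum cardinality, which is the claim.
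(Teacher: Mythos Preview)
Your proof is correct. Both you and the paper establish the same key equivalence---that $\cU'$ is type (ii) sufficient if and only if every $\pmb c\in\cU$ is dominated componentwise by some point of $\conv{\cU'}$---but via different tools. The paper rewrites the type (ii) condition for a fixed $\pmb c$ as $\max_{\pmb\alpha\ge\pmb 0}\min_{\pmb\mu}\pmb\alpha^\top\bigl(\pmb c-\sum_k\mu_k\pmb c^k\bigr)\le 0$ (with $\pmb\mu$ ranging over the simplex restricted by $\pmb\lambda$), invokes von Neumann's minimax theorem to swap the max and min, and then observes that the inner maximum over $\pmb\alpha\ge\pmb 0$ is nonpositive exactly when the componentwise inequality holds. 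You instead argue geometrically: the easy direction is immediate, and for the converse you separate $\pmb c$ from the downward-closed set $\conv{\cU'}-\rn_+$ and use its recession structure to force the separating normal into $\rn_+$. Your route is slightly more elementary in that it avoids the minimax theorem and makes the polyhedral geometry explicit; the paper's route is more compact and mirrors the identical minimax argument later reused in the proof of Theorem~\ref{th:equiv}.
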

\begin{proof}
As before, we use variables $\lambda_k\in\{0,1\}$ to indicate if scenario $\pmb{c}^k$ is contained in $\cU'$. The set $\cU'\subseteq \cU$ is a sufficient subset of type (ii) if and only if for all $\pmb{c}\in\cU$ and $\pmb{\alpha}\in\mathbb{R}^n_+$ there is $\pmb{c}'\in\cU'$ such that $\sum_{i\in[n]} \alpha_i c_i \le \sum_{i\in[n]} \alpha_i c'_i$.
Let us fix any $\pmb{c}\in\cU$. Then this condition is equivalent to requiring that
\[ \max_{\pmb{\alpha}\in\mathbb{R}^n_+} \min_{\pmb{c}'\in\cU'} \sum_{i\in[n]} \alpha_i (c_i - c'_i) \le 0 \]
Note that
\begin{align*}
& \max_{\pmb{\alpha}\in\mathbb{R}^n_+} \min_{\pmb{c}'\in\cU'} \sum_{i\in[n]} \alpha_i (c_i - c'_i) \\
=& \max_{\pmb{\alpha}\in\mathbb{R}^n_+} \min \left\{ \sum_{i\in[n]} \alpha_i (c_i - \sum_{k\in[N]} \mu_k c^k_i) : \sum_{k\in[N]} \mu_k = 1,\ \mu_k \le \lambda_k,\ \mu_k \ge 0\ \forall k\in[N] \right\} \\
=&\min \left\{ \max_{\pmb{\alpha}\in\mathbb{R}^n_+} \sum_{i\in[n]} \alpha_i (c_i - \sum_{k\in[N]} \mu_k c^k_i) :  \sum_{k\in[N]} \mu_k = 1,\ \mu_k \le \lambda_k,\ \mu_k \ge 0\ \forall k\in[N] \right\}
\end{align*}
where the last equality holds because of von Neumann's minimax theorem. Finally, we have
\[ \max_{\pmb{\alpha}\in\mathbb{R}^n_+} \sum_{i\in[n]} \alpha_i (c_i - \sum_{k\in[N]} \mu_k c^k_i) \le 0 \ \Leftrightarrow\ c_i - \sum_{k\in[N]} \mu_k c^k_i \le 0\ \forall i\in[n]. \]
By using a distinct set of variables $\pmb{\mu}$ for each $j\in[N]$, we obtain the claimed mixed-integer linear program.
\end{proof}

As in the case of type (i) sufficient subsets, there is an alternative to solving problem~(\ref{tiired-1}-\ref{tiired-5}) that is based on multi-criteria efficiency. As the variables $\mu_{jk}$ are continuous, it means that a scenario set $\cU'$ of type (ii) has the property that for each of the original scenarios $\pmb{c}\in\cU$, we need to be able to find a convex combination of scenarios in $\cU'$ that dominates $\pmb{c}$. In other words, a smallest set of this type contains exactly the supported efficient solutions (see, e.g., \cite{ehrgott2005multicriteria}), which we can again identify in polynomial time (for example, by finding the set of Pareto solutions, and removing those that can be dominated by a convex combination within this set).

We now turn to those types of sufficient subsets that make use of the set $\X$ of feasible solutions.

\begin{theorem}
\label{th:typeiii-subset-generic}
Let $(\pmb{\lambda},\pmb{\mu})$ be an optimal solution of the following integer linear program with potentially infinitely many constraints:
\begin{align}
\min\ &\sum_{k\in[N]} \lambda_k \\
\text{s.t. } & \sum_{i\in[n]} (\sum_{k\in[N]} \mu_{jk} c^k_i - c^j_i) x_i \ge 0 & \forall j\in [N], \pmb{x}\in \X \label{tiiired-2}\\
& \eqref{tired-3}, \eqref{tired-4}, \eqref{tired-5}, \eqref{tired-6} \nonumber
\end{align}
Then, $\cU'=\{ k\in[N] : \lambda_k =1 \}\subseteq \cU$ is a smallest sufficient subset of type (iii).
\end{theorem}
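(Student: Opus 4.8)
The plan is to mirror the argument used for Theorem~\ref{th:typei-subset}, replacing the component-wise domination there by the type (iii) domination condition $\pmb{x}^\top\pmb{c}^k\ge\pmb{x}^\top\pmb{c}^j$ for all $\pmb{x}\in\X$. As before, I read $\lambda_k\in\{0,1\}$ as indicating membership of $\pmb{c}^k$ in $\cU'$, and $\mu_{jk}\in\{0,1\}$ as indicating that scenario $\pmb{c}^j$ is dominated by scenario $\pmb{c}^k$. Since the objective $\sum_k\lambda_k$ counts $|\cU'|$, it suffices to show that feasible solutions of the program are in correspondence with type (iii) sufficient subsets, with matching objective value, and then invoke Theorem~\ref{th:sufficient}.

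First I would establish the direction feasibility $\Rightarrow$ type (iii). The key observation is that because each $\mu_{jk}$ is binary by \eqref{tired-5} and $\sum_{k}\mu_{jk}=1$ by \eqref{tired-4}, for each fixed $j$ exactly one index $k(j)$ satisfies $\mu_{j,k(j)}=1$ while all others vanish. Hence the inner sum collapses, $\sum_{k}\mu_{jk}c^k_i=c^{k(j)}_i$, and constraint~\eqref{tiiired-2} reduces to $\sum_{i}(c^{k(j)}_i-c^j_i)x_i\ge 0$ for all $\pmb{x}\in\X$, i.e.\ exactly $(\pmb{c}^{k(j)})^\top\pmb{x}\ge(\pmb{c}^j)^\top\pmb{x}$ for all $\pmb{x}\in\X$. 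Constraint~\eqref{tired-3} then forces $\lambda_{k(j)}=1$, so $\pmb{c}^{k(j)}\in\cU'$. Thus for every $\pmb{c}^j\in\cU$ there is a scenario in $\cU'$ that type-(iii) dominates it, which is precisely Definition~\ref{def:types}(iii).

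For the converse direction I would take any type (iii) sufficient subset $\cU'$ and build a feasible solution of the same cost: set $\lambda_k=1$ iff $\pmb{c}^k\in\cU'$, and for each $j$ pick, by the type (iii) property, some $\pmb{c}^{k}\in\cU'$ with $(\pmb{c}^{k})^\top\pmb{x}\ge(\pmb{c}^j)^\top\pmb{x}$ for all $\pmb{x}\in\X$, setting $\mu_{jk}=1$ and $\mu_{jk'}=0$ for all $k'\neq k$. One then checks routinely that \eqref{tired-3}, \eqref{tired-4}, \eqref{tired-5}, \eqref{tired-6} hold and that \eqref{tiiired-2} is satisfied by the chosen domination. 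Since the objective equals $|\cU'|$ in both directions, an optimal solution of the program yields a type (iii) sufficient subset of minimum cardinality.

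I do not expect a genuine obstacle here; the argument is essentially the type (i) proof with the domination relation swapped. The one point deserving care---and the crux of why the model is correct---is that keeping $\mu_{jk}$ binary (rather than relaxing it as in type (ii)) is exactly what forces a single dominating scenario per $j$, matching the ``there exists $\pmb{c}'$'' quantifier of type (iii); a convex combination would instead realize type (ii). The phrase ``potentially infinitely many constraints'' refers only to $\X$ possibly being infinite in \eqref{tiiired-2}; it does not affect the logical equivalence, though it is what motivates the constraint-generation treatment developed afterwards.
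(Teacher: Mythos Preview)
Your proposal is correct and follows essentially the same approach as the paper: interpret $\lambda_k$ as membership in $\cU'$, use the binary $\mu_{jk}$ with \eqref{tired-4} to select a single dominating scenario $\pmb{c}^{k(j)}$ for each $j$, and observe that \eqref{tiiired-2} then encodes exactly the type~(iii) condition with the correct quantifier order (since $\pmb{\mu}$ is chosen before $\pmb{x}$). Your write-up is in fact more complete than the paper's, as you spell out the converse direction explicitly and remark on why binarity of $\pmb{\mu}$ is essential here (versus type~(ii)).
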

\begin{proof}
As before, $\pmb{\lambda}$ is used as a binary variable to model set $\cU'$. For each $\pmb{c}^j\in\cU$, the binary variable $\mu_{jk}$ models a choice of some $\pmb{c}^k\in\cU'$. Constraints~\eqref{tiiired-2} then imply that for all $\pmb{c}^j$ and for all $\pmb{x}\in\X$, we have $\pmb{x}^\top \pmb{c}^k \ge \pmb{x}^\top \pmb{c}^j$. As variables $\pmb{\mu}$ do not depend on $\pmb{x}$, the order of quantifiers is as required by sufficiency type (iii).
\end{proof}

The existence of potentially infinitely many constraints makes the proposed optimization model intractable in practice. If the set of feasible solutions is sufficiently ''nice'', this issue can be avoided by using duality. We therefore formalize this property in the following.

\begin{definition}
We say $\X$ is (LP), if there is a known polyhedron in outer description $\X'=\{ (\pmb{x},\pmb{y})\in \mathbb{R}^n_+ \times \mathbb{R}^p : A\pmb{x}+B\pmb{y} \ge \pmb{b}\}$, where $\min_{\pmb{x}\in\X} \sum_{i\in[n]} c_i x_i = \min_{(\pmb{x},\pmb{y})\in\X'} \sum_{i\in[n]} c_i x_i$ for all $\pmb{c}\in\mathbb{R}^n$.
\end{definition}

In particular, note that $\X$ is (LP) is $\X$ if a polyhedron itself. Furthermore, many combinatorial problems allow for such a reformulation as a linear program, for example, this is the case for shortest path, spanning tree, and assignment problems. Recall that we already used this property in Proposition~\ref{prop:coneLP} to derive the dual cone.

\begin{cor}\label{cor:rediii}
Let $\X$ be (LP) and let $(\pmb{\lambda},\pmb{\mu},\pmb{\pi})$ be an optimal solution of the following integer linear program:
\begin{align}
\min\ &\sum_{k\in[N]} \lambda_k \\
\text{s.t. } & \pmb{b}^\top \pmb{\pi}^j \ge \pmb{0} & \forall j\in [N] \\
& A^\top\pmb{\pi}^j \le  \sum_{k\in[N]} \mu_{jk} \pmb{c}^k - \pmb{c}^j  & \forall j\in[N] \\
& B^\top\pmb{\pi}^j =  \pmb{0}  & \forall j\in[N] \\
& \eqref{tired-3}, \eqref{tired-4}, \eqref{tired-5}, \eqref{tired-6} \nonumber \\
& \pi^j_\ell \ge 0 & \forall j\in[N], \ell\in [q]
\end{align}
where $q$ is the number of rows in matrices $A$ and $B$. Then, $\cU'=\{ k\in[N] : \lambda_k =1 \}\subseteq \cU$ is a smallest sufficient subset of type (iii).
\end{cor}
\begin{proof}
We reformulate Constraints~\eqref{tiiired-2} under the assumption that $\X$ is (LP) using Proposition~\ref{prop:coneLP}. For any $j\in[N]$, we have
\begin{align*}
 &\sum_{i\in[n]} (\sum_{k\in[N]} \mu_{jk} c^k_i - c^j_i) x_i \ge 0 & \forall \pmb{x}\in\X \\
 \Leftrightarrow\ & \min_{\pmb{x}\in\X} \sum_{i\in[n]} (\sum_{k\in[N]} \mu_{jk} c^k_i - c^j_i) x_i \ge 0 \\
\Leftrightarrow\ & \min_{(\pmb{x},\pmb{y}) \in\X'} \sum_{i\in[n]} (\sum_{k\in[N]} \mu_{jk} c^k_i - c^j_i) x_i \ge 0 \\
\Leftrightarrow\ & \max \left\{ \pmb{b}^\top \pmb{\pi}^j : A^\top \pmb{\pi}^j \ge \sum_{k\in[N]} \mu_{jk} \pmb{c}^k - \pmb{c}^j,\ B^\top \pi^j = \pmb{0},\ \pmb{\pi}^j\in\mathbb{R}^{q}_+ \right\}
\end{align*}
The rest of the proof is the same as in Theorem~\ref{th:typeiii-subset-generic}.
\end{proof}

Note that even if $\X$ is not (LP), we may use any linear programming relaxation of $\X$ in the model of Corollary~\ref{cor:rediii} to find a sufficient subset, albeit not necessarily the smallest such set. This is because if we have the property that for all $\pmb{c}\in\cU$ there is some $\pmb{c}'\in\cU'$ such that for all $\pmb{x}'\in\X'$ we have $\pmb{x}^\top\pmb{c}' \ge \pmb{x}^\top\pmb{c}$ for some superset $\X'$ of $\X$, then this is in particular true for $\X$ and therefore property (iii) is fulfilled.

We now consider our strongest condition for finding sufficient subsets.

\begin{theorem}\label{th:typeiv-subset}
Let $(\pmb{\lambda},\pmb{\mu})$ be an optimal solution of the following integer linear program with potentially infinitely many variables and constraints:
\begin{align}
\min\ &\sum_{k\in[N]} \lambda_k \label{tivred-1}\\
\text{s.t. } & \sum_{i\in[n]} (\sum_{k\in[N]} \mu^{\pmb{x}}_{jk} c^k_i - c^j_i) x_i \ge 0 & \forall j\in [N], \pmb{x}\in \X \label{tivred-21}\\
& \mu^{\pmb{x}}_{jk} \le \lambda_k & \forall j,k\in[N], \pmb{x}\in \X \\
& \sum_{k\in[N]} \mu^{\pmb{x}}_{jk} = 1 & \forall j\in[N], \pmb{x}\in\X \\
& \mu^{\pmb{x}}_{jk} \in \{0,1\} & \forall j,k\in[N], \pmb{x}\in\X \\
& \lambda_k \in \{0,1\} & \forall k\in[N] \label{tivred-2}
\end{align}
Then, $\cU'=\{ k\in[N] : \lambda_k =1 \}\subseteq \cU$ is a smallest sufficient subset of type (iv).
\end{theorem}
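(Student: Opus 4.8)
The plan is to mirror the correspondence established in the proof of Theorem~\ref{th:typeiii-subset-generic}, while tracking the single structural change that distinguishes type (iv) from type (iii): the assignment variables now carry a superscript $\pmb{x}$. First I would fix the interpretation of the variables. As before, $\lambda_k\in\{0,1\}$ records whether $\pmb{c}^k$ is placed in $\cU'$. The variable $\mu^{\pmb{x}}_{jk}$ now indicates, \emph{for the specific pair} $(\pmb{c}^j,\pmb{x})$, that $\pmb{c}^k$ is selected as the dominating scenario. The constraint $\sum_{k}\mu^{\pmb{x}}_{jk}=1$ forces exactly one such choice per pair, $\mu^{\pmb{x}}_{jk}\le\lambda_k$ restricts the choice to members of $\cU'$, and---because $\pmb{\mu}^{\pmb{x}}$ is binary and sums to one---constraint~\eqref{tivred-21} collapses to $\pmb{x}^\top\pmb{c}^{k}\ge\pmb{x}^\top\pmb{c}^j$ for the unique selected $k$.

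With this reading in place, I would prove the two directions of the equivalence between feasibility of the program and type (iv) sufficiency of $\cU'$. For the forward direction, given a feasible $(\pmb{\lambda},\pmb{\mu})$ and any $\pmb{c}^j\in\cU$, $\pmb{x}\in\X$, the assignment constraint yields a unique index $k$ with $\mu^{\pmb{x}}_{jk}=1$; then $\mu^{\pmb{x}}_{jk}\le\lambda_k$ places $\pmb{c}^k$ in $\cU'$, and \eqref{tivred-21} gives $\pmb{x}^\top\pmb{c}^k\ge\pmb{x}^\top\pmb{c}^j$. Since this holds for every pair $(\pmb{c}^j,\pmb{x})$, the condition of Definition~\ref{def:types}(iv) is met. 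For the converse, starting from any type (iv) sufficient subset $\cU'$, I would set $\lambda_k=1$ exactly when $\pmb{c}^k\in\cU'$ and, for each pair $(\pmb{c}^j,\pmb{x})$, invoke type (iv) to obtain a witness $\pmb{c}'\in\cU'$ with $\pmb{x}^\top\pmb{c}'\ge\pmb{x}^\top\pmb{c}^j$; assigning $\mu^{\pmb{x}}_{jk}=1$ to one such witness index and zero elsewhere produces a feasible point whose objective equals $|\cU'|$. Consequently the optimal value of the program coincides with the minimum cardinality of a type (iv) sufficient subset, and the set $\cU'$ recovered from any optimal $\pmb{\lambda}$ is smallest.

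The crux of the argument, and the point I would stress, is precisely the $\pmb{x}$-indexing of $\pmb{\mu}$. In the type (iii) program the single family $\mu_{jk}$ had to serve all $\pmb{x}$ simultaneously, which encodes the quantifier order $\forall\pmb{c}\,\exists\pmb{c}'\,\forall\pmb{x}$; here the independent copies $\mu^{\pmb{x}}_{jk}$ let the selected dominating scenario vary with $\pmb{x}$, which is exactly the weaker order $\forall\pmb{c}\,\forall\pmb{x}\,\exists\pmb{c}'$ of type (iv). I expect the only real subtlety to be bookkeeping around the infinite index set $\X$: the program has a variable block and a constraint block for every $\pmb{x}\in\X$, so I would note explicitly that this does not affect the logical equivalence above---it is a purely finite argument carried out pointwise in $\pmb{x}$---and only bears on practical solvability, which is why a dualization in the spirit of Corollary~\ref{cor:rediii} would still be needed to obtain a finite formulation.
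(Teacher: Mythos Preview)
Your proposal is correct and follows exactly the same approach as the paper: the paper's proof simply states that the model is derived as in Theorem~\ref{th:typeiii-subset-generic} with the sole change that the variables $\pmb{\mu}$ acquire an $\pmb{x}$-index so that the chosen dominating scenario may depend on $\pmb{x}\in\X$. Your write-up spells out the two directions of the feasibility/type-(iv) correspondence and the quantifier-order interpretation in more detail than the paper does, but the underlying argument is identical.
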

\begin{proof}
This model is derived in the same way as type (iii) in Theorem~\ref{th:typeiii-subset-generic}, except that an index is added to variables $\pmb{\mu}$ so that they may depend on $\pmb{x}\in\X$.
\end{proof}

Unless $\X$ is a finite set of small cardinality, model~(\ref{tivred-1}-\ref{tivred-2}) is unlikely to be solvable in practice. If we assume that $\X$ is (LP), then we can use the following approach to find a sufficient subset, even though it may not be minimal.

\begin{cor}\label{th:typeiv-subset-approximate}
Let $\X$ be (LP) and let $(\pmb{\lambda},\pmb{\mu},\pmb{\pi})$ be a feasible solution of the following integer linear program:
\begin{align*}
\min\ &\sum_{k\in[N]} \lambda_k \\
\text{s.t. } & \pmb{b}^\top \pmb{\pi}^j \ge \pmb{0} & \forall j\in [N] \\
& A^\top\pmb{\pi}^j \le  \sum_{k\in[N]} \mu_{jk} \pmb{c}^k - \pmb{c}^j  & \forall j\in[N] \\
& B^\top\pmb{\pi}^j =  \pmb{0}  & \forall j\in[N] \\
& \eqref{tired-3}, \eqref{tired-4}, \eqref{tired-6} \nonumber \\
& \mu_{jk} \in [0,1] & \forall j,k\in[N] \\
& \pi^j_\ell \ge 0 & \forall j\in[N], \ell\in [q]
\end{align*}
Then, $\cU'=\{ k\in[N] : \lambda_k =1 \}\subseteq \cU$ is a sufficient subset of type (iv).
\end{cor}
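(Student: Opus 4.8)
The plan is to recycle the dual reformulation established in Corollary~\ref{cor:rediii} in order to read off what the constraints impose on the relaxed variables $\pmb{\mu}$, and then to extract, for each solution $\pmb{x}$, a single dominating scenario by a convexity (averaging) argument. Since the corollary asserts only sufficiency, I need to verify that any feasible point yields a type (iv) set; minimality is deliberately not claimed.

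First I would note that the only change relative to the type (iii) model of Corollary~\ref{cor:rediii} is that the integrality constraint \eqref{tired-5} has been replaced by $\mu_{jk}\in[0,1]$. As $\X$ is (LP), the reformulation in the proof of Corollary~\ref{cor:rediii} (via Proposition~\ref{prop:coneLP} and LP duality) carries over unchanged: for each $j\in[N]$, the dual-feasibility constraints on $\pmb{\pi}^j$ together with $\pmb{b}^\top\pmb{\pi}^j\ge\pmb{0}$ certify, by weak duality, that
\[ \Big(\sum_{k\in[N]}\mu_{jk}\pmb{c}^k\Big)^\top\pmb{x}\ \ge\ (\pmb{c}^j)^\top\pmb{x}\qquad\forall\,\pmb{x}\in\X. \]
Because $\sum_k\mu_{jk}=1$, $\mu_{jk}\ge0$, and $\mu_{jk}\le\lambda_k$, the vector $\sum_k\mu_{jk}\pmb{c}^k$ is a convex combination of scenarios drawn from $\cU'$. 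Thus feasibility guarantees, for every $\pmb{c}^j$, one convex combination of members of $\cU'$ that dominates $\pmb{c}^j$ simultaneously over all of $\X$.

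Next I would establish type (iv) directly from this. Fixing an arbitrary $\pmb{c}^j\in\cU$ and $\pmb{x}\in\X$ and evaluating the displayed inequality at this $\pmb{x}$ gives $\sum_k\mu_{jk}(\pmb{c}^k)^\top\pmb{x}\ge(\pmb{c}^j)^\top\pmb{x}$, i.e. a convex combination of the numbers $(\pmb{c}^k)^\top\pmb{x}$ lies above $(\pmb{c}^j)^\top\pmb{x}$. Hence some index in the support must already lie above it: there is a $k$ with $\mu_{jk}>0$ and $(\pmb{c}^k)^\top\pmb{x}\ge(\pmb{c}^j)^\top\pmb{x}$, for otherwise every term carrying positive weight would be strictly smaller and the weighted average could not reach $(\pmb{c}^j)^\top\pmb{x}$. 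Since $\mu_{jk}>0$ forces $\lambda_k\ge\mu_{jk}>0$, hence $\lambda_k=1$ and $\pmb{c}^k\in\cU'$, the choice $\pmb{c}'=\pmb{c}^k$ meets the type (iv) requirement for the pair $(\pmb{c}^j,\pmb{x})$. As the pair was arbitrary, $\cU'$ is of type (iv), and sufficiency then follows from Theorem~\ref{th:sufficient}.

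The step I expect to carry the weight — and the reason the relaxation produces type (iv) rather than type (iii) — is the averaging argument: the dominating index $k$ is selected only after $\pmb{x}$ has been fixed, which is precisely the quantifier order distinguishing type (iv) from type (iii). The two points I would be careful about are that $\sum_k\mu_{jk}\pmb{c}^k$ is a genuine convex combination (which needs $\sum_k\mu_{jk}=1$ and nonnegativity, both retained from the type (iii) model) and that a strictly positive weight certifies membership in $\cU'$. Finally I would remark that this argument yields only sufficiency and not minimality: the model enforces the stronger property of a single convex combination valid across all of $\X$, so a smallest type (iv) subset could be strictly smaller, which is exactly why only a feasible — not an optimal — solution is required.
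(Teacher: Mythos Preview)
Your proof is correct and follows essentially the same route as the paper: both arguments rest on the observation that feasibility of the LP certifies, for each $j$, a single convex combination $\sum_k\mu_{jk}\pmb{c}^k$ from $\cU'$ that dominates $\pmb{c}^j$ over all of $\X$, and then extract one scenario per $\pmb{x}$. The paper phrases this extraction via the weak min--max inequality $\max_{\pmb{\mu}}\min_{\pmb{x}}\le\min_{\pmb{x}}\max_{\pmb{\mu}}$, whereas your direct averaging argument (``a convex combination is at least the target, so some term with positive weight is at least the target'') makes the vertex-selection step explicit; the content is the same.
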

\begin{proof}
For fixed $\pmb{\lambda}$ and $j\in[N]$, let
\[ M = \{\pmb{\mu}\in[0,1]^{N} : \mu_k \le \lambda_k \ \forall k\in[N],\ \sum_{k\in[N]} \mu_k = 1\}.\]
We reformulate Constraints~\eqref{tivred-21} using the (LP) assumption for fixed $j\in [N]$:
\begin{align*}
&  \forall \pmb{x}\in \X \ \exists \pmb{\mu}\in M:\  \sum_{i\in[n]} (\sum_{k\in[N]} \mu_k c^k_i - c^j_i) x_i \ge 0 \\
\Leftrightarrow\ & \min_{\pmb{x}\in\X} \max_{\pmb{\mu}\in M} \sum_{i\in[n]} (\sum_{k\in[N]} \mu_k c^k_i - c^j_i) x_i \ge 0 \\
\Leftarrow\ & \max_{\pmb{\mu}\in M} \min_{\pmb{x}\in\X} \sum_{i\in[n]} (\sum_{k\in[N]} \mu_k c^k_i - c^j_i) x_i \ge 0\\
\Leftrightarrow\  & \max_{\pmb{\mu}\in M} \min_{(\pmb{x},\pmb{y})\in\X'} \sum_{i\in[n]} (\sum_{k\in[N]} \mu_k c^k_i - c^j_i) x_i \ge 0 \\
\Leftrightarrow\ & \max \left\{ \pmb{b}^\top \pmb{\pi}^j : A^\top \pmb{\pi}^j \ge \sum_{k\in[N]} \mu_{jk} \pmb{c}^k - \pmb{c}^j,\ B^\top \pi^j = \pmb{0},\ \pmb{\pi}^j\in\mathbb{R}^{q}_+,\ \pmb{\mu}\in M \right\}
\end{align*}
Note that the implications mean that the final line is a sufficient but not necessary criterion to fulfill type (iv). In general, $\min_{\pmb{x}\in\mathcal{X}} \max_{\pmb{y}\in\mathcal{Y}} f(\pmb{x},\pmb{y}) \ge \max_{\pmb{y}\in\mathcal{Y}} \min_{\pmb{x}\in\mathcal{X}} f(\pmb{x},\pmb{y})$, but equality does not necessarily hold if $\X$ is discrete.
\end{proof}

\subsection{Example Problems}
\label{sec:example-problems}

\subsubsection{The Selection Problem}
\label{sec:selection-problem}

Take as an example the \emph{selection problem} where the feasible region is $\X_{n,p} = \{ \pmb{x}\in\{0,1\}^n : \sum_{i\in[n]} x_i = p\}$ for an integer $p$.
For small values of $n$ we can enumerate $\X_{n,p}$ as above; in particular there are ${n \choose p}$ solutions.
For example, for $n=3$ and $p=2$, we have
\[ \X_{3,2} = \{ (1,1,0), (0,1,1), (1,0,1)\} \]

Recall that type (iii) dominance can be expressed in terms of the dual of the conic hull of the feasible set.
In this case, the conic hull of $\X_{n,p}$ can then simply be expressed as a finitely generated cone:

\begin{equation}
  \label{eq:conic-hull-selection}
  \conic{\X_{n,p}} = \{\sum\lambda_{i} \pmb{x}_{i} : \pmb{x}_{i}\in\X,\ \lambda \geq 0 \}.
\end{equation}

Note that these conic hulls are different for different values of $p$. For example, $(1,2,3) = 1\cdot(1,0,1) + 2\cdot(0,1,1) \in \conic{\X_{3,2}}$, while $(1,2,4)\notin \conic{\X_{3,2}}$. The following result shows that this cone becomes more restricted as $p$ increases.

\begin{proposition}
  \label{prop:conic-selection-monotonic}
  \begin{equation*}
    \conic{\X_{n,p+1}} \subseteq \conic{\X_{n,p}}
  \end{equation*}
\end{proposition}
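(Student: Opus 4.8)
The plan is to reduce the cone inclusion to a membership statement about generators. Since $\conic{\X_{n,p+1}}$ is by definition the smallest convex cone containing $\X_{n,p+1}$ (equivalently, the set of all finite nonnegative combinations of its elements), and $\conic{\X_{n,p}}$ is itself a convex cone, the desired inclusion $\conic{\X_{n,p+1}} \subseteq \conic{\X_{n,p}}$ holds as soon as every generator of the left-hand cone already lies in the right-hand cone. Thus it suffices to prove the pointwise statement $\X_{n,p+1} \subseteq \conic{\X_{n,p}}$.

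The key step is a short combinatorial averaging identity. Fix $\pmb{x}\in\X_{n,p+1}$ and let $S\subseteq[n]$ denote its support, so $|S| = p+1$ and $\pmb{x}$ is the indicator vector of $S$. For each $e\in S$, let $\pmb{x}^{(e)}$ be the vector obtained from $\pmb{x}$ by setting its $e$-th coordinate to $0$; this is a $0/1$ vector with exactly $p$ ones, hence $\pmb{x}^{(e)}\in\X_{n,p}$. Summing over all $e\in S$ and noting that each coordinate of $S$ is zeroed out in exactly one of the $p+1$ terms, I obtain
\[
\sum_{e\in S} \pmb{x}^{(e)} = (p+1)\,\pmb{x} - \pmb{x} = p\,\pmb{x}.
\]
Consequently $\pmb{x} = \tfrac1p \sum_{e\in S} \pmb{x}^{(e)}$ is a nonnegative combination of elements of $\X_{n,p}$, so $\pmb{x}\in\conic{\X_{n,p}}$, which completes the argument.

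I do not anticipate a genuine obstacle: the averaging identity is the entire content, and it is exact. The only points requiring care are the degenerate ranges of $p$. The division by $p$ needs $p\ge 1$, which is the natural hypothesis for the selection problem (and in fact the claim fails for $p=0$, since $\conic{\X_{n,1}} = \rn_+ \not\subseteq \{\pmb{0}\} = \conic{\X_{n,0}}$). At the other extreme, if $p+1 > n$ then $\X_{n,p+1} = \emptyset$, and since $\conic{\emptyset} = \{\pmb{0}\}$ the inclusion holds trivially. I would state the proposition under the standing assumption $1 \le p$ (and implicitly $p+1 \le n$ for the nontrivial case) and present the generator computation above as the proof.
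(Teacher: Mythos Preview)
Your proof is correct and follows essentially the same approach as the paper: both reduce to showing $\X_{n,p+1}\subseteq\conic{\X_{n,p}}$, and both establish this by writing a $(p{+}1)$-support indicator vector as $\tfrac{1}{p}$ times the sum of its $p$-support subvectors (the paper indexes these by the retained $p$-subsets $J\subset I$, you index them by the removed element $e\in S$, which is the same bijection). Your additional remarks on the degenerate cases $p=0$ and $p+1>n$ are a welcome clarification the paper omits.
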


\begin{proof}
  It suffices to note that we have $\pmb{x}\in\conic{\X_{n,p}}$ for any $\pmb{x}\in\X_{n,p+1}$.
  Fix $\pmb{x}\in\X_{n,p+1}$ and let $I = \{ 1\leq i \leq n : x_{i} = 1\}$.
  Then, let $\mathcal{J} = \{J \subset I : \sum_{j\in J} x_{j} = p\}$.
  For $J \in \mathcal{J}$ let $\pmb{x}^{J}\in\rn$ be such that $x^{J}_{i} =
  \begin{cases}
    1 & \text{ if } i\in J,\\
    0 & \text{ otherwise.}
  \end{cases}$
  Now, we have
  \begin{align*}
    \pmb{x} &= \frac{1}{p}\left( \sum_{J\in\mathcal{J}} \pmb{x}^{J} \right)
  \end{align*}
  and so $\pmb{x}\in\conic{\X_{n,p}}$ as required.
\end{proof}

Note that the converse of this result cannot be true as any element in $\conic{\X_{n,p+1}}$ must have at least $p+1$ non-zero entries, whereas any element of $\X_{n,p}$ has only $p$ non-zero entries.
The significance of this result is that if a set $\cU' \subset \cU$ is type (iii) sufficient with respect to $\X_{n,p}$, then it is also type (iii) sufficient with respect to $\X_{n,p+1}$.
This follows from the conic definition of sets of type (iii) in Proposition~\ref{prop:conic-type-iii}.

\subsubsection{Relaxation for Other Combinatorial Problems}

Consider another robust combinatorial optimization problem with feasible decision set $\X\subseteq \{0,1\}^{n}$.
Like with the selection problem, the solutions $\pmb{x}\in\mathcal{X}$ of such problems can be viewed as selecting from a subset from larger collection of items.
Other examples include the shortest path problem and spanning tree problems (where one selects a collection of edges in a graph), and the vertex cover problem (where one selects a subset of nodes in a graph).
Such problems will require the selection of a minimal number of elements:
\begin{equation*}
  \alpha_{\X} = \min_{\pmb{x}} \left\{\sum_{i\in[n]} x_{i}: \pmb{x}\in\mathcal{X} \right\}
\end{equation*}
By Proposition~\ref{prop:conic-selection-monotonic}, this means that $\mathcal{X} \subseteq \X_{n,\alpha_{\X}}$ and so for such a problem, we could use $\X_{n,\alpha_{\X}}$ for the purposes of scenario reduction in the case that $\X$ cannot be used directly.

\subsubsection{Shortest Path Problem}
\label{sec:short-path-probl}

Given a network $G=(N,A)$, and a pair of nodes $s, e \in N$ in this, the feasible set of decisions for a shortest path problem is defined as follows:
\begin{equation*}
  \X = \left\{ (x_{a})_{a\in\mathcal{A}} \in \{0,1\}^{n} : \sum_{a\in \delta_{+}(i)}x_{a} - \sum_{a\in \delta_{-}(i)}x_{a} =
  \begin{cases}
   -1 & \text{ if } i = s \\
    1 & \text{ if } i = e \\
    0 & \text{ otherwise}
  \end{cases}
  \text{ for each } i \in \mathcal{N} \right\}
\end{equation*}
where $d = |\mathcal{A}|$ and $\delta_{+}(i)$ and $\delta_{-}(i)$ represent the set of incoming and out-going arcs for a given node $i\in\mathcal{N}$.

For the purposes of this paper we will work with the shortest path problem for a particular type of network called a \emph{layered network}.
This network is defined by two parameters $L\in\mathbb{N}$, the number of layers, and $W\in\mathbb{N}$ the width of each layer.
The nodes of the network consist of a start and end node, and then $L$ layers of nodes, where each layer has $W$ nodes.
There are arcs connecting the start node to each node in the first layer, arcs connecting each node of a layer to all the nodes in the next layer, and finally arcs connecting each node in the final layer to the end node.
Such a graph will thus consist of $LW + 2$ nodes and $(L-1)W^{2} + 2W$ arcs.

For a layered graph with $L$ layers of width $W$ we have $\alpha_{\X} = L + 1$ so in this case we have $\X_{n, L+1} \subseteq \X$ where $n=(L-1)W^{2} + 2W$.

\section{Scenario Approximation}
\label{sec:approximation}

\subsection{Definitions and Properties}

We now study the scenario approximation problem, which relaxes the requirement of scenario reduction by allowing the robust objective value to change. We define the scenario approximation problem as follows.

\begin{definition}
A discrete set $\cU'\subseteq\mathbb{R}^n$ is said to be $\beta$-approximate with respect to $\cU$, if for all minimizers $\pmb{x}\in\X'$ of $\max_{\pmb{c}\in\cU'} f(\pmb{x},\pmb{c})$, we have
\[ \min_{\pmb{x}\in\X} \max_{\pmb{c}\in\cU} f(\pmb{x},\pmb{c}) \le \beta \max_{\pmb{c}\in\cU} f(\pmb{x}',\pmb{c}) \]
\end{definition}

By constructing an $\beta$-approximate set, we can solve a robust problem with respect to $\cU'$ to obtain a solution that is an $\beta$-approximation for the original robust problem. In analogy to Definition~\ref{def:types}, we now define four criteria that allow us to identify such sets.

\begin{definition}\label{def:approxcriteria}
We define the following properties for a discrete set $\cU'\subseteq \textup{conv}(\cU)$ and given $\beta\ge 1$:
\begin{itemize}
\item $\beta$-approximate type (i'):
\[ \forall \pmb{c}\in \cU \ \exists \pmb{c}'\in\textup{conv}(\cU') : \beta \pmb{c}' \ge \pmb{c} \]

\item $\beta$-approximate type (ii'):
\[ \forall \pmb{c}\in\cU, \pmb{\alpha}\in\mathbb{R}^n_+\ \exists \pmb{c}'\in\textup{conv}(\cU') : \beta  \pmb{\alpha}^\top \pmb{c}' \ge \pmb{\alpha}^\top \pmb{c} \]

\item $\beta$-approximate type (iii'):
\[ \forall \pmb{c}\in\cU\ \exists \pmb{c}'\in\textup{conv}(\cU') \ \forall \pmb{x}\in\X : \beta  \pmb{x}^\top \pmb{c}' \ge \pmb{x}^\top\pmb{c} \]

\item $\beta$-approximate type (iv'):
\[ \forall \pmb{c}\in\cU, \pmb{x}\in\X\ \exists \pmb{c}'\in\textup{conv}(\cU') : \beta \pmb{x}^\top \pmb{c}' \ge \pmb{x}^\top\pmb{c} \]

\end{itemize}
\end{definition}

Note that we allow scenarios $\cU'$ to come form the convex hull of $\cU$, which further relaxes the conditions of Definition~\ref{def:types} and allows us to construct few scenarios that represent $\cU$ well. We show that the difference between properties (i') and (ii') disappears in this case. In the following, we use the notation $\Delta_n=\{\pmb{\lambda}\in\mathbb{R}^n_+ : \sum_{i\in[n]} \lambda_i = 1\}$ for the $n$-dimensional simplex.

\begin{theorem}\label{th:equiv}
It holds that (i') $\Leftrightarrow$ (ii').
\end{theorem}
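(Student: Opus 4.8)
The direction (i') $\Rightarrow$ (ii') is the easy one and mirrors the argument from Theorem~\ref{th:implications}: given $\pmb{c}\in\cU$, type (i') hands us $\pmb{c}'\in\conv{\cU'}$ with $\beta\pmb{c}'\ge\pmb{c}$ componentwise, and then for any $\pmb{\alpha}\in\rn_+$ multiplying through and summing gives $\beta\pmb{\alpha}^\top\pmb{c}'\ge\pmb{\alpha}^\top\pmb{c}$, which is exactly (ii') using the \emph{same} witness $\pmb{c}'$. The point that makes this clean (and different from the non-approximate setting, where (ii)$\not\Rightarrow$(i)) is that in both (i') and (ii') the witness $\pmb{c}'$ is drawn from the full convex set $\conv{\cU'}$ rather than from the finite set $\cU'$.

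\textbf{The harder direction is (ii') $\Rightarrow$ (i'), and this is where the main work lies.} Fix $\pmb{c}\in\cU$. Property (ii') says: for every $\pmb{\alpha}\in\rn_+$ there exists $\pmb{c}'\in\conv{\cU'}$ with $\beta\pmb{\alpha}^\top\pmb{c}'\ge\pmb{\alpha}^\top\pmb{c}$. The obstacle is that the witness $\pmb{c}'$ may depend on $\pmb{\alpha}$, whereas (i') demands a \emph{single} $\pmb{c}'\in\conv{\cU'}$ that works simultaneously for all components (equivalently, for all $\pmb{\alpha}$). I would reformulate (ii') as a statement about the optimal value of a min-max game and then swap the order of the quantifiers via a minimax theorem. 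Concretely, for fixed $\pmb{c}$ the condition of (ii') is equivalent to
\[
\max_{\pmb{\alpha}\in\Delta_n}\ \min_{\pmb{c}'\in\conv{\cU'}}\ \pmb{\alpha}^\top(\pmb{c}-\beta\pmb{c}') \le 0,
\]
where I have normalized $\pmb{\alpha}$ to the simplex $\Delta_n$ (legitimate since the inequality in (ii') is positively homogeneous in $\pmb{\alpha}$, and the $\pmb{\alpha}=\pmb{0}$ case is vacuous). The desired conclusion (i') is precisely the statement that the min and max can be interchanged:
\[
\min_{\pmb{c}'\in\conv{\cU'}}\ \max_{\pmb{\alpha}\in\Delta_n}\ \pmb{\alpha}^\top(\pmb{c}-\beta\pmb{c}') \le 0,
\]
because the inner maximum over $\Delta_n$ equals $\max_{i\in[n]}(c_i-\beta c'_i)$, so this inequality asserts the existence of one $\pmb{c}'$ with $\beta c'_i\ge c_i$ for all $i$, i.e.\ $\beta\pmb{c}'\ge\pmb{c}$.

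\textbf{Thus the crux reduces to justifying the exchange of $\min$ and $\max$, which I would do with von Neumann's minimax theorem} — the same tool already invoked in the proof of Theorem~\ref{th:typeii-subset}. Both $\Delta_n$ and $\conv{\cU'}$ are convex and compact, and the payoff $\pmb{\alpha}^\top(\pmb{c}-\beta\pmb{c}')$ is bilinear (hence concave in $\pmb{\alpha}$ and convex in $\pmb{c}'$), so the hypotheses of the minimax theorem are met and the saddle-value identity holds. Combining the reformulation, the minimax exchange, and the translation of the swapped expression back into componentwise form yields (i'), completing the equivalence. The only points requiring care are confirming that restricting $\pmb{\alpha}$ to the compact simplex loses no generality and that $\conv{\cU'}$ is compact (which holds since $\cU'$ is a finite discrete set, so its convex hull is a polytope).
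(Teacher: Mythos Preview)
Your proposal is correct and follows essentially the same strategy as the paper's proof: both directions use the same reasoning, and for (ii') $\Rightarrow$ (i') both you and the paper recast the condition as a bilinear max--min being $\le 0$ and invoke von Neumann's minimax theorem to interchange the quantifiers. The only cosmetic difference is that you normalize $\pmb{\alpha}$ to the simplex $\Delta_n$ (making compactness explicit) while the paper keeps $\pmb{\alpha}\in\mathbb{R}^n_+$ and parametrizes $\conv{\cU'}$ via $\pmb{\lambda}\in\Delta_K$; your version is, if anything, slightly cleaner in checking the hypotheses of the minimax theorem.
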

\begin{proof}
Let $\cU'$ be of type (i'), and let any $\pmb{c}\in\cU$ and $\pmb{\alpha}\in\mathbb{R}^n_+$ be given. As there is $\pmb{c}'\in\textup{conv}(\cU')$ with $\beta\pmb{c}'\ge\pmb{c}$, it follows that $\beta\pmb{\alpha}^\top\pmb{c}' \ge \pmb{\alpha}^\top\pmb{c}$ holds as well.

Let $\cU'=\{\hat{\pmb{c}}^1,\ldots,\hat{\pmb{c}}^K\}$ be of type (ii'), and let any $\pmb{c}\in\cU$ be given. We thus have that
\begin{align*}
& \forall \pmb{\alpha}\in\mathbb{R}^n_+ \exists \pmb{c}' \in\textup{conv}(\cU') : \beta \pmb{\alpha}^\top\pmb{c}' \ge \pmb{\alpha}^\top\pmb{c} \\
\Leftrightarrow\ & \forall \pmb{\alpha}\in\mathbb{R}^n_+ : \min_{\pmb{\lambda}\in\Delta_K} \left( \pmb{\alpha}^\top\pmb{c} - \beta \pmb{\alpha}^\top(\sum_{k\in[K]} \lambda_k \hat{\pmb{c}}^k) \right) \le 0 \\
\Leftrightarrow\ & \max_{\pmb{\alpha}\in\mathbb{R}^n_+} \min_{\pmb{\lambda}\in\Delta_K} \left( \pmb{\alpha}^\top\pmb{c} - \beta \pmb{\alpha}^\top(\sum_{k\in[K]} \lambda_k \hat{\pmb{c}}^k) \right) \le 0 \\
\Leftrightarrow\ & \min_{\pmb{\lambda}\in\Delta_K} \max_{\pmb{\alpha}\in\mathbb{R}^n_+}  \left( \pmb{\alpha}^\top\pmb{c} - \beta \pmb{\alpha}^\top(\sum_{k\in[K]} \lambda_k \hat{\pmb{c}}^k) \right) \le 0
\end{align*}
where the last reformulation holds because of von Neumann's minimax theorem.
Given $\pmb{\lambda}\in\Delta_K$, note that
\[ \max_{\pmb{\alpha}\in\mathbb{R}^n_+}  \left( \pmb{\alpha}^\top\pmb{c} - \beta \pmb{\alpha}^\top(\sum_{k\in[K]} \lambda_k \hat{\pmb{c}}^k) \right) \le 0 \ \Leftrightarrow\ c_i - \beta(\sum_{k\in[K]} \lambda_k \hat{c}^k_i ) \le 0\ \forall i\in[n] \]
We therefore obtain that type (ii') implies there is $\lambda\in\Delta_K$ such that $\pmb{c} \le \beta \sum_{k\in[K]} \lambda_k\hat{\pmb{c}}^k$, which is the claimed property of type (i').
\end{proof}

The following result was given in \cite{goerigk2023optimal}.

\begin{theorem}\label{th:approxold}
Let $\cU = \{\pmb{c}^1,\ldots,\pmb{c}^N\}$ and let $\cU' = \{\hat{\pmb{c}}^1,\ldots,\hat{\pmb{c}}^K\}\subseteq \textup{conv}(\cU)$. Let $\beta \ge 1$ be such that
\[ \forall \pmb{c}\in\cU\ \exists \pmb{c}'\in\textup{conv}(\cU') : \beta \pmb{c}' \ge \pmb{c} \]
Then, $\cU'$ is a $\beta$-approximate set with respect to $\cU$.
\end{theorem}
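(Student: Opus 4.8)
The plan is to reduce everything to a single pointwise domination inequality between the worst-case cost under $\cU$ and under $\cU'$, and then chain it with the minimax optimality of the solution obtained from the reduced problem. Throughout I write $\text{OPT} = \min_{\pmb{x}\in\X}\max_{\pmb{c}\in\cU} f(\pmb{x},\pmb{c})$, let $\pmb{x}^*\in\X$ attain it, and let $\pmb{x}'\in\X$ be a minimizer of $\max_{\pmb{c}\in\cU'} f(\pmb{x},\pmb{c})$; the goal is the approximation guarantee that the genuine worst-case cost of $\pmb{x}'$ under $\cU$ is at most $\beta\,\text{OPT}$.

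First I would establish the key inequality
\[ \max_{\pmb{c}\in\cU} f(\pmb{x},\pmb{c}) \le \beta \max_{\hat{\pmb{c}}\in\cU'} f(\pmb{x},\hat{\pmb{c}}) \qquad \text{for all } \pmb{x}\in\X. \]
Fixing $\pmb{x}\in\X\subseteq\rn_+$ and an arbitrary $\pmb{c}\in\cU$, the hypothesis supplies some $\pmb{c}'\in\textup{conv}(\cU')$ with $\beta\pmb{c}'\ge\pmb{c}$ componentwise; since $\pmb{x}\ge\pmb{0}$ and $f$ is linear in its second argument, this gives $f(\pmb{x},\pmb{c}) \le \beta f(\pmb{x},\pmb{c}')$. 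Writing $\pmb{c}'=\sum_k\lambda_k\hat{\pmb{c}}^k$ as a convex combination of the points of $\cU'=\{\hat{\pmb{c}}^1,\ldots,\hat{\pmb{c}}^K\}$ and using linearity once more yields $f(\pmb{x},\pmb{c}')=\sum_k\lambda_k f(\pmb{x},\hat{\pmb{c}}^k)\le\max_{\hat{\pmb{c}}\in\cU'} f(\pmb{x},\hat{\pmb{c}})$. Taking the maximum over $\pmb{c}\in\cU$ on the left then delivers the claimed inequality.

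Second, I would record the complementary bound coming from $\cU'\subseteq\textup{conv}(\cU)$: for every $\hat{\pmb{c}}\in\cU'$ we may write $\hat{\pmb{c}}=\sum_j\mu_j\pmb{c}^j$ with $\pmb{\mu}\in\Delta_N$, so $f(\pmb{x},\hat{\pmb{c}})\le\max_{\pmb{c}\in\cU} f(\pmb{x},\pmb{c})$ and hence $\max_{\hat{\pmb{c}}\in\cU'} f(\pmb{x},\hat{\pmb{c}})\le\max_{\pmb{c}\in\cU} f(\pmb{x},\pmb{c})$ for all $\pmb{x}\in\X$. With both facts in hand the conclusion follows by a short chain: applying the key inequality at $\pmb{x}'$, then using that $\pmb{x}'$ minimizes the $\cU'$-worst-case over $\X$ (so its value is at most that of $\pmb{x}^*$), and finally the complementary bound at $\pmb{x}^*$, we get
\[ \max_{\pmb{c}\in\cU} f(\pmb{x}',\pmb{c}) \le \beta \max_{\hat{\pmb{c}}\in\cU'} f(\pmb{x}',\hat{\pmb{c}}) \le \beta \max_{\hat{\pmb{c}}\in\cU'} f(\pmb{x}^*,\hat{\pmb{c}}) \le \beta \max_{\pmb{c}\in\cU} f(\pmb{x}^*,\pmb{c}) = \beta\,\text{OPT}. \]

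I expect the only genuine subtlety to be the passage from a dominating point $\pmb{c}'$ lying in the convex hull $\textup{conv}(\cU')$ to a bound in terms of the finite maximum $\max_{\hat{\pmb{c}}\in\cU'}$: this is exactly where nonnegativity of $\pmb{x}$ and linearity of $f$ in $\pmb{c}$ are both needed, since a convex combination of objective values is always dominated by their maximum. Everything else is bookkeeping with the minimax optimality of $\pmb{x}'$ and $\pmb{x}^*$, and the symmetric roles of the two ``convex-hull'' reductions (one for $\cU'$ in the key inequality, one for $\cU$ in the complementary bound).
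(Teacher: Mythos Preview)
Your argument is correct and is exactly the standard one. Note, however, that the paper does not give its own proof of this theorem; it is simply cited from \cite{goerigk2023optimal}. The pattern you use---the pointwise sandwich $\max_{\pmb{c}\in\cU} f(\pmb{x},\pmb{c}) \le \beta \max_{\hat{\pmb{c}}\in\cU'} f(\pmb{x},\hat{\pmb{c}})$ together with $\max_{\hat{\pmb{c}}\in\cU'} f(\pmb{x},\hat{\pmb{c}}) \le \max_{\pmb{c}\in\cU} f(\pmb{x},\pmb{c})$, chained through the optimality of $\pmb{x}'$ and $\pmb{x}^*$---is precisely the mechanism the paper employs in the subsequent Theorem~\ref{th:approxnew}, which generalizes the present statement to properties~(iii') and~(iv'). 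So your approach coincides with the paper's own treatment of the surrounding material.
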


We extend this result towards the criteria from Definition~\ref{def:approxcriteria}.

\begin{theorem}\label{th:approxnew}
Let $\cU'\subseteq \textup{conv}(\cU)$ be a discrete set that fulfills one of the properties (i')-(iv'). Then, $\cU'$ is a $\beta$-approximate set with respect to $\cU$.
\end{theorem}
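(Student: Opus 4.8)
The plan is to lift the implication diagram of Theorem~\ref{th:implications} to the primed criteria, so that the statement collapses to a single case. First I would verify that (i') $\Rightarrow$ (ii'), (i') $\Rightarrow$ (iii'), (ii') $\Rightarrow$ (iv') and (iii') $\Rightarrow$ (iv'), each exactly as in the unprimed setting: the componentwise bound $\beta\pmb{c}'\ge\pmb{c}$ yields $\beta\pmb{x}^\top\pmb{c}'\ge\pmb{x}^\top\pmb{c}$ for every $\pmb{x}\in\X\subseteq\rn_+$, the substitution $\pmb{\alpha}=\pmb{x}$ turns (ii') into (iv'), and (iii') carries a strictly stronger quantifier order than (iv') (the direction (i') $\Rightarrow$ (ii') is already contained in Theorem~\ref{th:equiv}). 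Hence each of (i')--(iv') implies the weakest criterion (iv'), and it suffices to show that (iv') alone forces $\cU'$ to be $\beta$-approximate.

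I would then fix a minimizer $\pmb{x}'$ of the reduced robust problem $\min_{\pmb{x}\in\X}\max_{\pmb{c}\in\cU'} f(\pmb{x},\pmb{c})$ and choose $\bar{\pmb{c}}\in\cU$ with $\max_{\pmb{c}\in\cU} f(\pmb{x}',\pmb{c}) = f(\pmb{x}',\bar{\pmb{c}})$, that is, a worst-case scenario for $\pmb{x}'$ over the original set. Applying (iv') to the pair $(\bar{\pmb{c}},\pmb{x}')$ produces some $\pmb{c}'\in\textup{conv}(\cU')$ with $\beta\,(\pmb{x}')^\top\pmb{c}'\ge(\pmb{x}')^\top\bar{\pmb{c}}$. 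The key step is to discharge the continuous representative $\pmb{c}'$: since $f(\pmb{x}',\cdot)$ is linear and $\pmb{c}'$ is a convex combination of elements of $\cU'$, one has $(\pmb{x}')^\top\pmb{c}'\le\max_{\hat{\pmb{c}}\in\cU'}(\pmb{x}')^\top\hat{\pmb{c}}$, so the bound can be stated purely in terms of the discrete reduced set. Combining the two inequalities gives $\max_{\pmb{c}\in\cU} f(\pmb{x}',\pmb{c})\le\beta\max_{\hat{\pmb{c}}\in\cU'} f(\pmb{x}',\hat{\pmb{c}})$.

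To finish I would append the trivial bound $\min_{\pmb{x}\in\X}\max_{\pmb{c}\in\cU} f(\pmb{x},\pmb{c})\le\max_{\pmb{c}\in\cU} f(\pmb{x}',\pmb{c})$, which holds because the minimum over $\X$ cannot exceed the value attained at $\pmb{x}'$; chaining it with the previous estimate yields the claimed approximation guarantee relating the original robust optimum to the reduced value achieved at $\pmb{x}'$. I expect the main obstacle to be the correct handling of the quantifier order: the worst-case scenario $\bar{\pmb{c}}$ must be selected \emph{after} $\pmb{x}'$ is fixed and then fed into (iv'), which is precisely why allowing $\pmb{c}'$ to depend on both $\pmb{c}$ and $\pmb{x}$ --- the weakest of the four criteria --- is already sufficient, and why a single argument covers all of (i')--(iv'). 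The second delicate point is the convexity reduction from $\textup{conv}(\cU')$ back to $\cU'$, which is essential since $\cU'$ need not be a subset of $\cU$ and the guarantee must ultimately be expressed through the discrete set that is actually optimized over.
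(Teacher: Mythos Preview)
Your argument is correct. The organization differs from the paper's in two small ways: first, you funnel all four primed criteria down to (iv') via the implication diagram and then give a single chain, whereas the paper dispatches (i') by citing Theorem~\ref{th:approxold}, obtains (ii') from the equivalence in Theorem~\ref{th:equiv}, and then treats (iii') and (iv') with an explicit inequality chain. Second, and more substantively, your chain is anchored at the worst-case scenario $\bar{\pmb{c}}$ \emph{for the reduced-problem optimizer} $\pmb{x}'$, while the paper anchors at the worst-case scenario $\pmb{c}^*$ for the \emph{original} optimizer $\pmb{x}^*$ and then passes to $\hat{\pmb{x}}$. Your choice makes the quantifier handling that you highlight completely transparent and the convexity step $(\pmb{x}')^\top\pmb{c}'\le\max_{\hat{\pmb{c}}\in\cU'}(\pmb{x}')^\top\hat{\pmb{c}}$ is exactly the right way to return from $\textup{conv}(\cU')$ to the discrete set; the paper's chain uses the same linearity/convexity observation implicitly. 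Both routes deliver the same inequality, and your single-case reduction is arguably the cleaner packaging.
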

\begin{proof}
The case of property (i') is already covered in Theorem~\ref{th:approxold}. As (ii') and (i') are equivalent according to Theorem~\ref{th:equiv}, the claim also holds for (ii').

Let some optimal solution $\pmb{x}^*$ of the robust problem with respect to $\cU$ be given, and let $\hat{\pmb{x}}$ be an optimal solution of the robust problem with respect to $\cU'$. Let $\pmb{c}^*\in\cU$ be any worst-case scenario for $\pmb{x}^*$.

Assume that (iii') holds. Then there exists $\pmb{c}'\in\textup{conv}(\cU')$ such that for all $\pmb{x}\in\X$, we have $\beta \pmb{x}^\top \pmb{c}' \ge \pmb{x}^\top\pmb{c}^*$. Hence,
\[ \min_{\pmb{x}\in\X} \max_{\pmb{c}\in\cU} \pmb{c}^\top\pmb{x} = (\pmb{c}^*)^\top\pmb{x}^* \le (\pmb{c}^*)^\top\hat{\pmb{x}}
\le \beta (\pmb{c}')^\top\hat{\pmb{x}} \le \beta \max_{\pmb{c}\in\textup{conv}(\cU')} \pmb{c}^\top\pmb{x}^* = \beta \min_{\pmb{x}\in\X} \max_{\pmb{c}\in\cU'} \pmb{c}\pmb{x}. \]
For case (iv'), we choose $\pmb{c}'\in\textup{conv}(\cU')$ such that $(\pmb{c}^*)^\top\hat{\pmb{x}} \le \beta (\pmb{c}')^\top \hat{\pmb{x}}$ and use the same reasoning as above.

\end{proof}

\subsection{Optimization Models for Scenario Approximation}

We now use Theorem~\ref{th:approxnew} to derive optimization models that construct sets of each type with best possible approximation guarantees. We first consider type (i'). In \cite{goerigk2023optimal}, the following model for such sets has already been proposed:
\begin{align}
\max\ &t \\
\text{s.t. } & t \pmb{c}^j \le \sum_{k\in[K]} \mu_{ik} \hat{\pmb{c}}^k & \forall j\in [N] \\
& \hat{\pmb{c}}^k = \sum_{j\in[N]} \lambda_{kj} \pmb{c} ^j& \forall k\in[K] \\
& \sum_{k\in[K]} \mu_{jk} = 1 & \forall j\in[N] \\
& \sum_{j\in[N]} \lambda_{kj} = 1 & \forall k\in[K] \\
& \mu_{jk} \in [0,1] & \forall j\in[N], k\in[K] \\
& \lambda_{kj} \in [0,1] & \forall k\in[K], j\in[N]
\end{align}
A separate model for type (ii') is not necessary due to Theorem~\ref{th:equiv}.

For type (iii'), we need to consider constraints
\[ t \sum_{\ell\in[n]} c^j_\ell x_\ell \le \sum_{\ell\in[n]} \left( \sum_{k\in[K]} \mu_{ik} \hat{c}_\ell^k\right) x_\ell \ \forall j\in [N], \pmb{x}\in\X. \]
This is equivalent to
\[ \min_{\pmb{x}\in\X}  \sum_{\ell\in[n]} \left(  \sum_{k\in[K]} \mu_{ik} \hat{c}_\ell^k - t c^j_\ell \right) x_\ell \ge 0 \ \forall j\in[N] \]
This is a nominal-type problem with potentially negative cost coefficients (i.e., a problem of the form $\min_{\pmb{x}\in\X} \pmb{c}^\top\pmb{x}$ for a specific $\pmb{c}\in\mathbb{R}^n$). Let us assume that a compact linear programming formulation for the nominal problem is available (recall that this is the case, e.g., for the minimum spanning tree problem with negative costs, as well as the selection problem and the shortest path problem on cycle-free graphs, while the shortest path problem on general graphs becomes NP-hard when negative costs are allowed). This means we can dualize this problem, with the dual being $\max \{ \pmb{b}^\top \pmb{y} : A^\top \pmb{y} \le   \sum_{k\in[K]} \mu_{ik} \hat{c}_\ell^k - t c^i_\ell\}$. Then we can solve (iii') using the following model:
\begin{align}
\max\ &t \label{app3-start}\\
\text{s.t. } & b^\top \pmb{\pi}^j \ge \pmb{0} & \forall j\in [N] \\
& A^\top \pmb{\pi}^j \le  \sum_{k\in[K]} \mu_{jk} \hat{c}_\ell^k - t c^j_\ell  & \forall j\in[N] \\
& B^\top \pmb{\pi}^j = \pmb{0} & \forall j\in[N] \\
& \hat{\pmb{c}}^k = \sum_{j\in[N]} \lambda_{kj} \pmb{c}^j& \forall k\in[K] \\
& \sum_{k\in[K]} \mu_{jk} = 1 & \forall j\in[N] \\
& \sum_{j\in[N]} \lambda_{kj} = 1 & \forall k\in[K] \\
& \pi^j_\ell \ge 0 & \forall j\in[N], \ell\in[q] \\
& \mu_{jk} \in [0,1] & \forall j\in[N], k\in[K] \\
& \lambda_{kj} \in [0,1] & \forall k\in[K], j\in[N] \label{app3-end}
\end{align}

Finally, for (iv') we need to fulfill constraints of the form
\[ \min_{\pmb{x}\in\X} \max_{\pmb{\mu}\in\Lambda}  \sum_{\ell\in[n]} \left(  \sum_{k\in[K]} \mu_{k} \hat{c}_\ell^k -t c^i_\ell \right) x_\ell \ge 0 \ \forall i\in[N] \]
with $\Lambda=\{\pmb{\mu}\in[0,1]^K : \sum_{k\in[K]} \mu_k = 1\}$. The left-hand side is equivalent to the following standard robust optimization problem with $K$ scenarios:
\begin{align*}
\min\ & z \\
\text{s.t. } & z \ge \sum_{\ell\in[n]} \left( \hat{c}_\ell^k - t c^i_\ell\right) x_\ell & \forall k\in[K] \\
& \pmb{x} \in \X
\end{align*}
This motivates the following iterative column-and-constraint generation solution procedure, whose principle is frequently applied to solve robust two-stage problems, see \cite{zeng2013solving,robook}. We first solve
\begin{align*}
\max\ & t \\
\text{s.t. } &  \sum_{\ell\in[n]} \left(   \sum_{k\in[K]} \mu^{\pmb{x}}_{jk} \hat{c}_\ell^k - t c^j_\ell\right) x_\ell \ge 0 & \forall \pmb{x}\in \X', j\in[N] \\
& \hat{\pmb{c}}^k = \sum_{j\in[N]} \lambda_{kj} \pmb{c}^j & \forall k\in[K] \\
& \sum_{k\in[K]} \mu^{\pmb{x}}_{jk} = 1 & \forall \pmb{x}\in \X', j\in[N] \\
& \sum_{j\in[N]} \lambda_{kj} = 1 & \forall k\in[K] \\
& \pmb{\mu},\pmb{\lambda} \ge \pmb{0}
\end{align*}
for some subset of candidate solutions $\X' \subseteq \X$ (it is possible to use $\X'=\emptyset$ in the first iteration). This gives $(t,\hat{\pmb{c}}^1,\ldots,\hat{\pmb{c}}^K)$. We then solve the $N$ robust problems
\[ \min_{\pmb{x}\in \X} \max_{k\in[K]} \sum_{\ell\in[n]} \left(  \hat{c}_\ell^k - t c^j_\ell \right) x_\ell \]
If the objective value is non-negative for all $j\in[N]$, we have found an optimal solution. Otherwise, we add the corresponding $\pmb{x}$-solutions to $\X'$ and repeat the process.

In the following, we also use this approach to evaluate a given scenario set $\cU'$, i.e., we follow the same principle for fixed variables $\hat{\pmb{c}}^k$ to their corresponding approximation guarantee.

\section{Computational Experiments}
\label{sec:experiments}

In the following experiments, we study the performance of both scenario reduction (Experiment~1) and scenario approximation (Experiment~2). We apply these approaches to the two types of problem already discussed: the selection problem, with feasible set of solutions $\X_{n,p}$, and the shortest path problem for layered graphs of length $L$ and width $W$.

\subsection{Experiment 1: Scenario Reduction}

\subsubsection{Setting}

We first test the potential for reduction using types (i)-(iv) reduction for the selection problem and the shortest path problem with a layered graph.

For both problem types we perform the different types of reduction across a range of problem parameters.
In both cases, for each set of problems, we randomly sample 10 scenario sets of size $N=100$ uniformly on $[0,1]^{n}$ where $n$ is the dimension of the problem.
For each sampled set we reduce the set using the different types of reduction and record the proportion of scenarios reduced.
All mixed integer linear programs were implemented using the JuMP package \cite{Lubin2023} in Julia and solved using Gurobi version 9 \cite{gurobi} on a laptop with Intel(R) Core(TM) i5-8265U CPU @ 1.60GHz processor with 8 CPUs.

In the results below, the ``Red-1'' and ``Red-2'' methods find reduced scenario sets by solving the optimization problems in Theorems~\Ref{th:typei-subset} and \ref{th:typeii-subset}.
Minimal subsets of type (iii) are found using the problem specific formulations in Corollary~\ref{cor:rediii} and are labeled ``Red-3''.
Minimal subsets of type (iv) are found via the formulation in Theorem~\ref{th:typeiv-subset} and are labeled ``Red-4''
Finally, the method ``Red-4S'' refers to sets found using the optimization problem in Corollary~\ref{th:typeiv-subset-approximate}.

\subsubsection{Selection Problem Results}

For the selection problem, for each dimension, we perform reduction across a range of $p$.
For better comparability, for each $n$ we use the same set of scenarios for reduction across all $p$. Note also that the computational burden of some of the methods is much greater than others.
Minimal type (iv) reduction in particular can only done for only relatively small dimensions and numbers of scenarios.
We therefore run this experiment over different ranges of dimensions for the different methods.

\begin{figure}[htbp]
  \centering
  \begin{subfigure}{0.49\textwidth}
    \includegraphics[width=\textwidth]{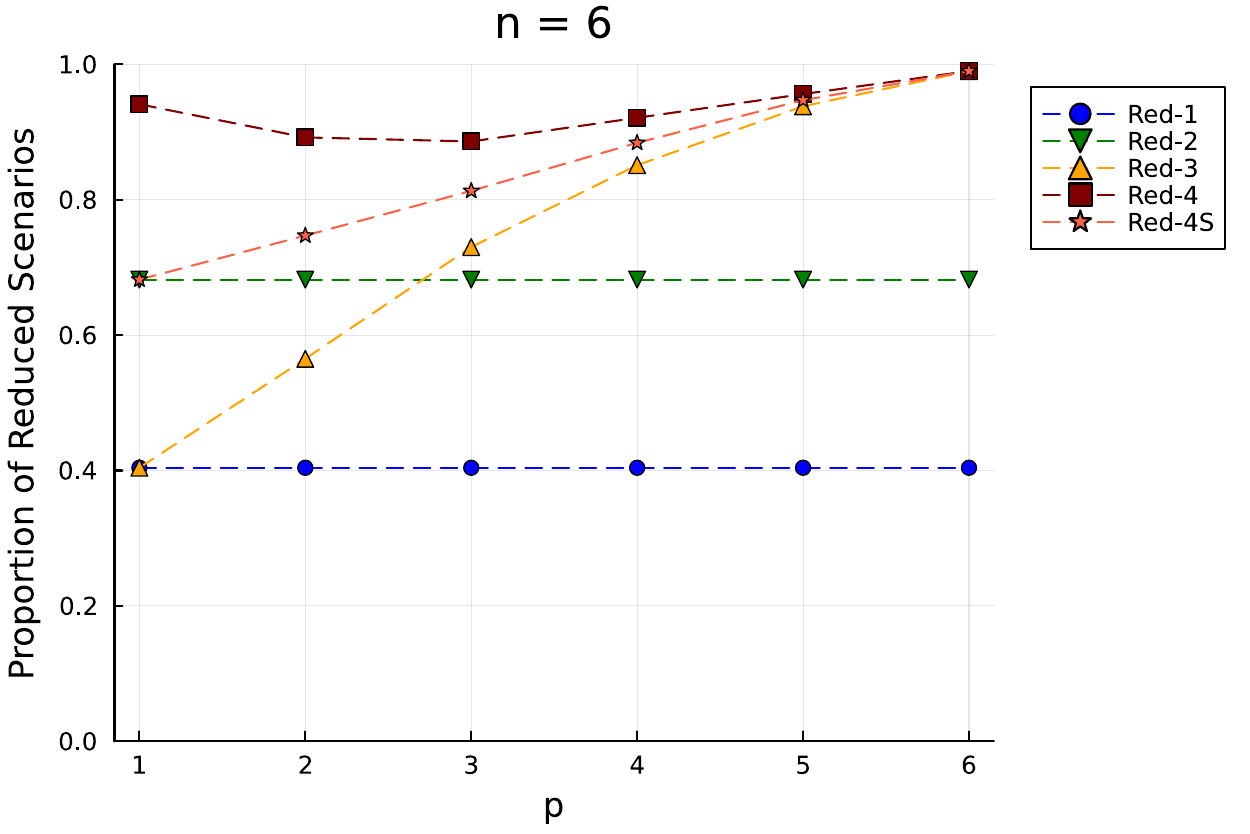}
  \end{subfigure}
  \begin{subfigure}{0.49\textwidth}
    \includegraphics[width=\textwidth]{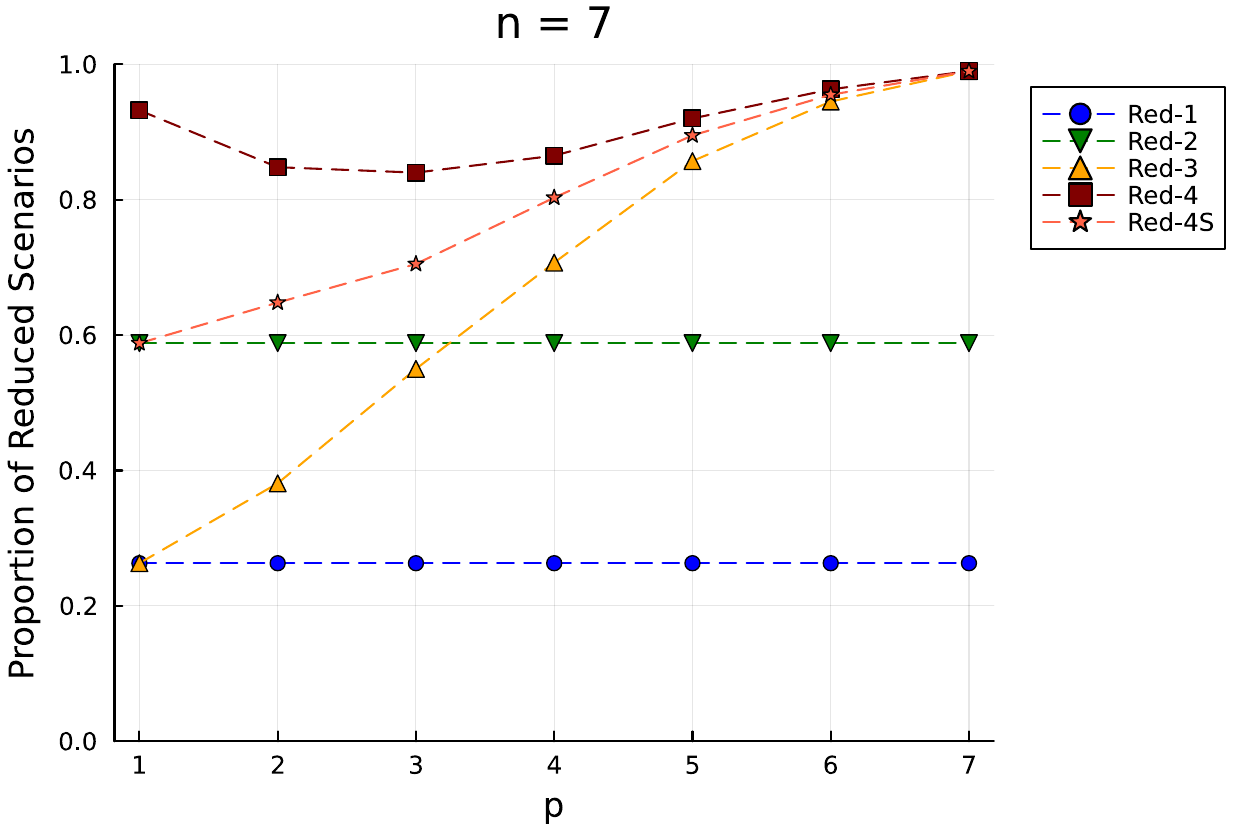}
  \end{subfigure}
  \begin{subfigure}{0.49\textwidth}
    \includegraphics[width=\textwidth]{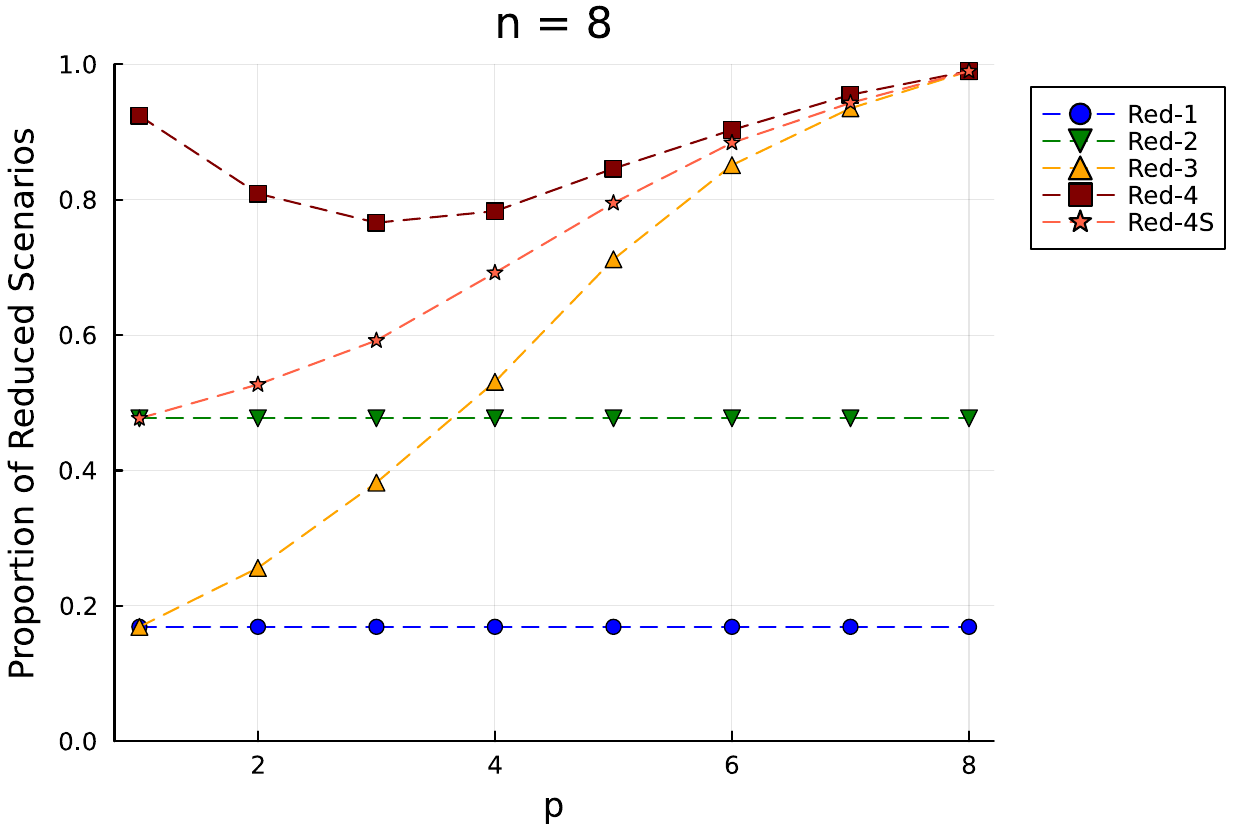}
  \end{subfigure}
  \begin{subfigure}{0.49\textwidth}
    \includegraphics[width=\textwidth]{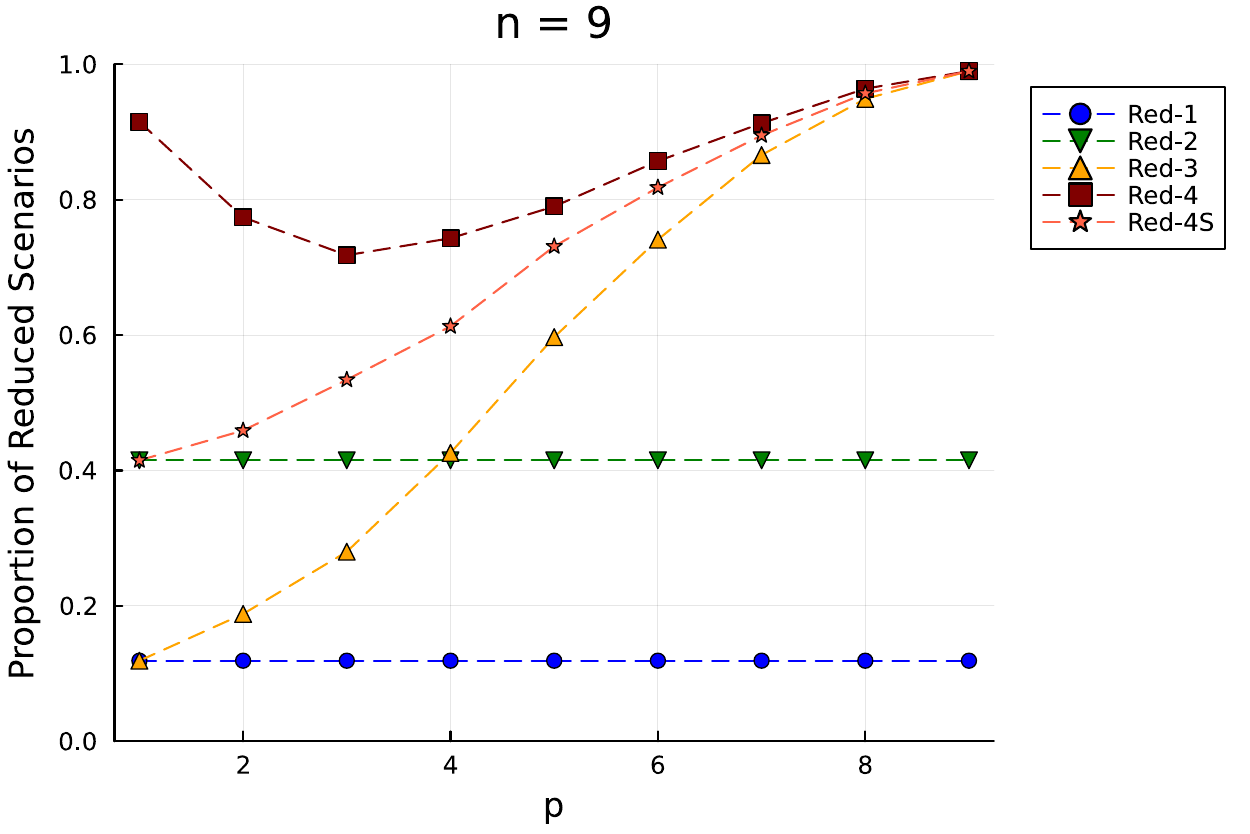}
  \end{subfigure}
  \caption{Reduction using for selection problem for dimensions $n=6,7,8,9$}
  \label{fig:reduction-selection-exact-small-dims}
\end{figure}

Figure~\ref{fig:reduction-selection-exact-small-dims} shows the average reduction for dimensions $n=6,\ldots,9$.
This plot demonstrates again the relationships between the different types of set stated in Theorem~\ref{th:implications}.
Type (i) reduction is always the weakest type of reduction and it becomes less effective as we increase the dimension $n$ of the problem.
Neither type (ii) nor type (iii) dominate the other strictly, but type (iii) becomes stronger as we increase the parameter $p$ and eventually beats type (ii).
The fact that the effectiveness of type (iii) improves can be explained by Proposition~\ref{prop:conic-selection-monotonic} which says that the conic hull of the feasible region becomes smaller of we increase $p$.
Note that both type (i) and type (ii) reduction are flat lines in the plot below since they do not depend on the problem, and are thus unaffected by different values of $p$.

The fact that approximate type (iv) reduction is more effective than exact type (iii) can be explained by the fact that the minimization problem for finding an approximate type (iv) set in Corollary~\ref{th:typeiv-subset-approximate} is a relaxation of the problem used to find the minimal type (iii) set in Corollary~\ref{cor:rediii}.
Type (iv) reduction is always by far the most effective type of reduction but interestingly its effectiveness does not observe the same monotonic relationship with $p$ as the other problem-based approaches.
The fact that reduction is very high for small values of $p$ can be explained by the relatively small number of solutions in the feasible set $\X_{n,p}$.
For example, when $p=1$ then $\X_{n,p}$ consists of only $n$ solutions, which means that we need at most $n$ scenarios in order to preserve the worst-case value for each solution.
As $p$ increases, we have a decrease in the effectiveness as the number of feasible solutions sharply increase, but the effectiveness then increases again as the conic hull of the feasible solution set gets smaller.

\begin{figure}[htbp]
  \centering
  \begin{subfigure}{0.49\textwidth}
    \includegraphics[width=\textwidth]{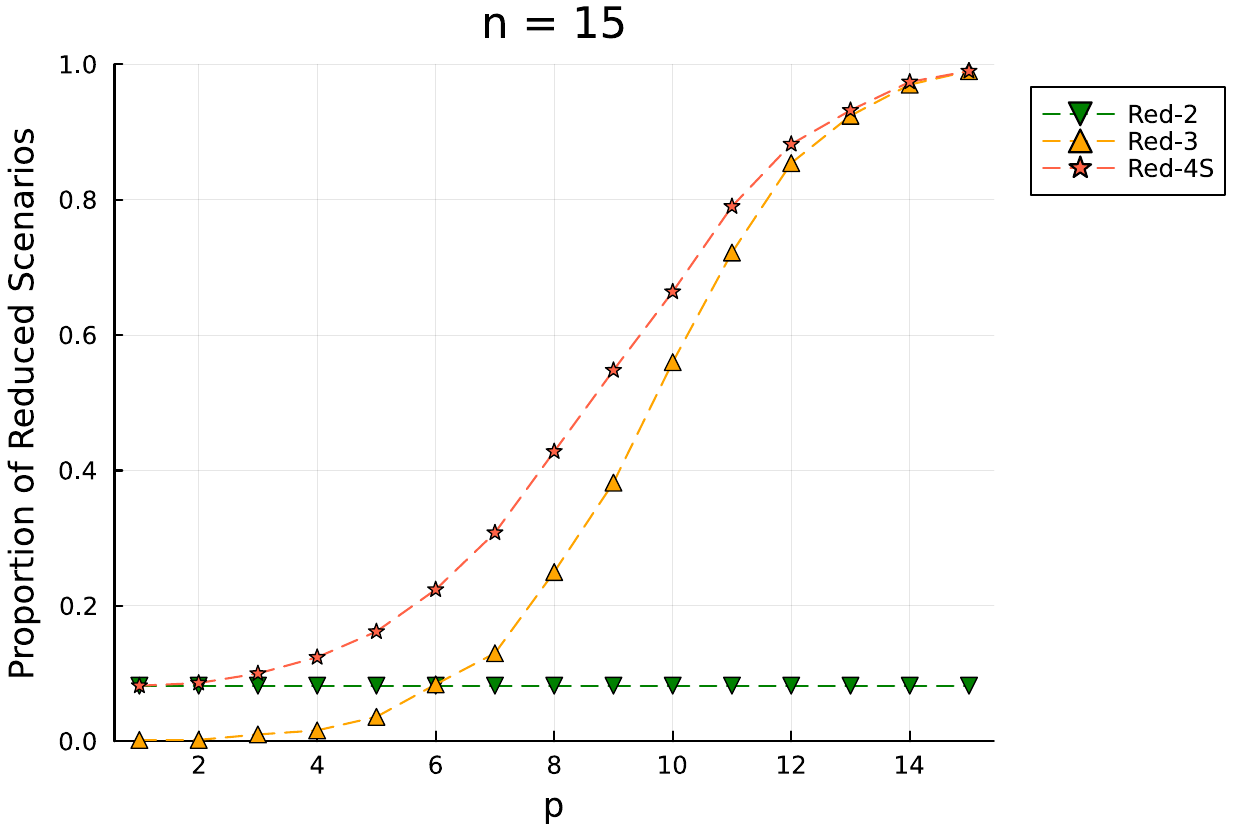}
  \end{subfigure}
  \begin{subfigure}{0.49\textwidth}
    \includegraphics[width=\textwidth]{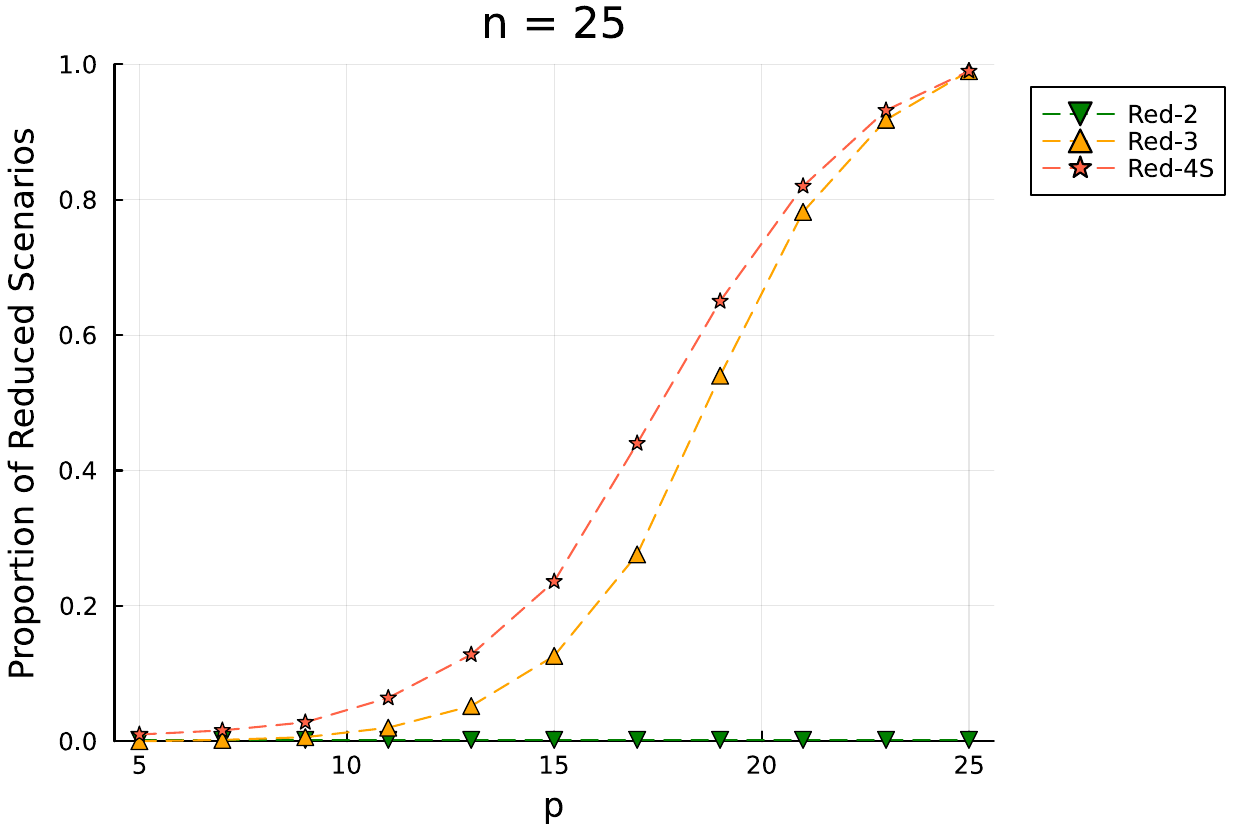}
  \end{subfigure}
  \begin{subfigure}{0.49\textwidth}
    \includegraphics[width=\textwidth]{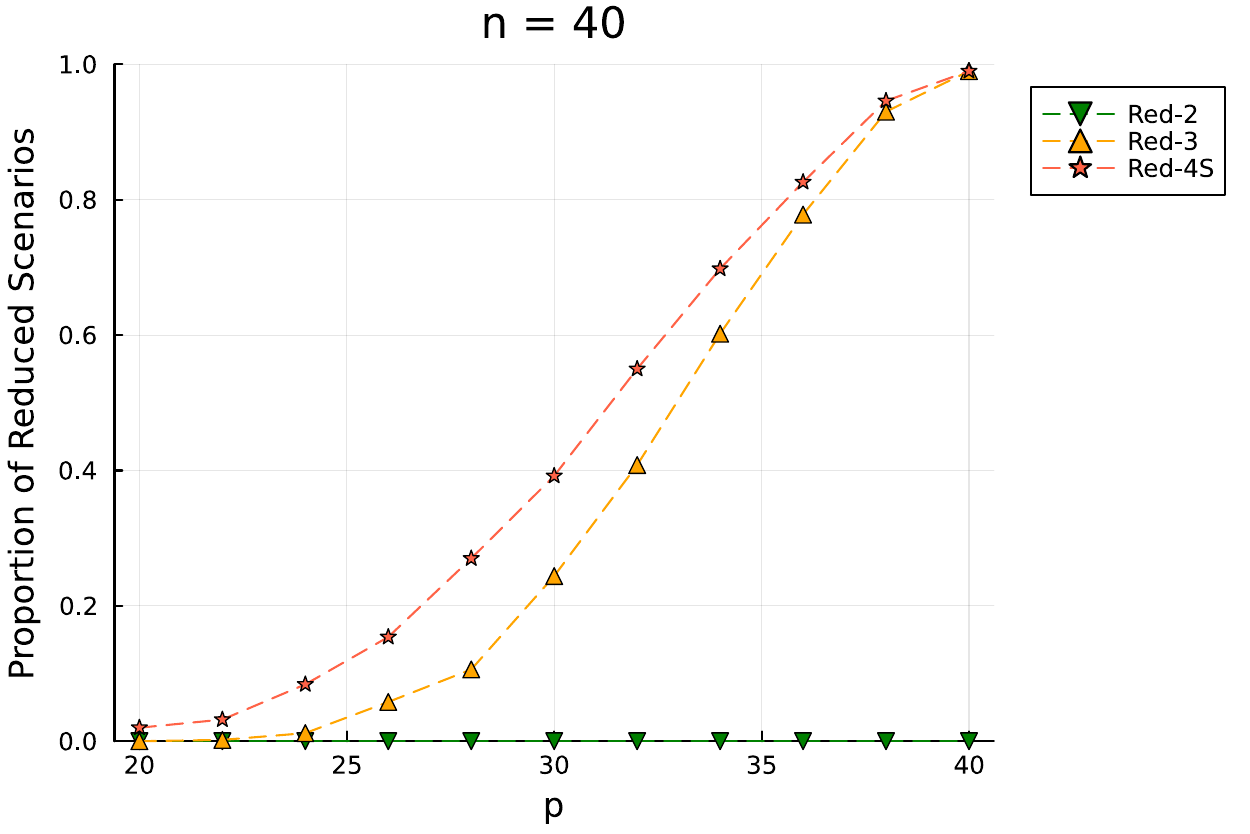}
  \end{subfigure}
  \caption{Reduction using for selection problem for dimensions $n=15$, $25$ and $40$}
  \label{fig:reduction-selection-exact-medium-dims}
\end{figure}

Figure~\ref{fig:reduction-selection-exact-medium-dims} shows the average reduction from 100 scenarios for dimensions 15, 20 and 25.
For these dimensions we cannot perform type (iv) reduction and we do not perform any reduction of type (i) since this method results in no reduction for higher dimensions.
Again, we observe that the effectiveness of type (iii) and approximate type (iv) reduction increase with $p$, but we need much higher values of $p$ for it to be effective at all.
For example for $n=40$, we need $n > 28$ for type (iii) reduction to have much effect at all.
We also observe in this case that for $n > 15$, type (ii) reduction has essentially no effect.

\subsubsection{Shortest Path Results}

For the shortest path problem we solve the problem for layered directed graphs with layers ($L$) and widths ($W$) between 2 and 6. Again, we can only do type (iv) reduction for problems with smaller dimensions.
The results showing average reductions achieved for the different types of reduction are shown in Table~\ref{tab:sp-red}.
To aid the interpretation of these results we also include a column for the dimension $n$ of the problem, namely $n=2W + (L-1)W^{2}$.
Although we did perform reduction experiments for problems with larger dimensions without type (iv), they nearly all achieved zero reductions so we do not show detailed tables of results for these.

\begin{table}[htb]
  \centering
\begin{tabular}{c|c|c|c|c|c|c|c}
L & W & dim & Red-1 & Red-2 & Red-3 & Red-4 & Red-4S\\
\hline
2 & 2 & 8 & 0.162 & 0.452 & 0.808 & 0.960 & 0.892\\
2 & 3 & 15 & 0.004 & 0.102 & 0.432 & 0.916 & 0.708\\
2 & 4 & 24 & 0.000 & 0.004 & 0.102 & 0.866 & 0.414\\
2 & 5 & 35 & 0.000 & 0.000 & 0.014 & 0.796 & 0.200\\
2 & 6 & 48 & 0.000 & 0.000 & 0.004 & 0.740 & 0.104\\
3 & 2 & 12 & 0.018 & 0.200 & 0.692 & 0.936 & 0.850\\
3 & 3 & 24 & 0.000 & 0.002 & 0.112 & 0.806 & 0.412\\
3 & 4 & 40 & 0.000 & 0.000 & 0.006 & 0.644 & 0.132\\
4 & 2 & 16 & 0.000 & 0.082 & 0.504 & 0.890 & 0.722\\
4 & 3 & 33 & 0.000 & 0.000 & 0.028 & 0.656 & 0.196\\
5 & 2 & 20 & 0.000 & 0.026 & 0.344 & 0.804 & 0.610\\
6 & 2 & 24 & 0.000 & 0.004 & 0.268 & 0.774 & 0.532\\
\end{tabular}
  \caption{Shortest Path Exact Reduction Results}
  \label{tab:sp-red}
\end{table}

These results again demonstrate the relative effectiveness of the different types of reduction (according to Theorem~\ref{th:implications}), and that the effectiveness of all methods diminishes as the dimension of the problem grows, and we see that only exact type (iv) reduction is able to produce large reductions across all combinations of $L$ and $W$.
What is particularly interesting here is to compare the reductions here to those which would be achieved using the selection problem as a relaxation (see Section~\Ref{sec:short-path-probl}).
That is, for a shortest path problem with parameters $L$ and $W$, it is valid to use types (iii) and (iv) reduction for the selection problem with $n=2W +(L-1)W^{2}$ and $p=l+1$.
For example, for $L=2$ and $W=3$ type (iii) and approximate type (iv) reduction give us average reductions of 0.43 and 0.71.
On the other hand, using the selection problem relaxation with $n=15$ and $p=4$ gives respective reductions of only 0.016 and 0.124.
This demonstrates that it is important to make use of as much problem structure as possible when it comes to doing reduction.

\subsection{Experiment 2: Scenario Approximation}

\subsubsection{Setting}

To assess the scenario approximation approach, we again consider the selection problem as well as the shortest path problem with layered graphs. For selection problems, we generate a set of small-scale instances, and a set of large-scale instances. We consider instances with $n \in \{8, 12,15\}$ and $p \in \{1,2,\ldots,n\}$ to be small, and instances with $n \in \{20,30,50,100\}$ and $p \in \{2,4,\ldots,n\}$ to be large. In both cases, we set $N=50$ and $K=5$ (i.e., we approximate 50 original scenarios using 5 scenarios). We chose a smaller value of $N$ in comparison to the previous experiment due to the increased computational difficulty of solving the approximation problem. For each combination of $n$, $N$ and $K$, we generate 50 instances and always show the average results. We use a 300-second time limit for each optimization problem.

In the experiment of the shortest path problem we first set $W=3$ with $L \in \{2,\ldots,15\}$, and then set $L=4$ with $W \in \{2,\ldots,8\}$. In addition, we set the size of the original uncertainty set to be $N=20$ and the reduced uncertainty set to be $K=5$ for both cases.

All instances of the selection and shortest path problems are randomly sampled uniformly from the set $\{1,2, \ldots,100\}$. Each generated instance for these problems is solved using all scenarios to determine the optimal objective value, or an estimate if the 300-second time limit is reached.

Optimal solution methods are used for all instances except for the large-scale instances of the selection problem, where heuristics are applied. In each case, the results are compared with the k-means solution method.
For large-scale instances of the selection problem, we apply a heuristic pre-processing. We first use the K-means algorithm to cluster the scenario set into $K$ clusters. We then use our scenario approximation methods to represent each of these cluster by a single scenario. Depending on the method applied for this second step, we refer to these heuristic methods as App-1/2 (App-3) as well.

For all scenario approximation experiments, we use CPLEX version 22.11 on a GenuineIntel pc-i440fx-7.2 CPU computer server running at 2.00 GHz with 15 GB of RAM. Additionally, only one core is used for each instance evaluation. Code was implemented in C++.

\subsubsection{Selection Problem Results}

For small-scale instances, we discuss $n=12$ as a representative set of results. Corresponding plots for $n=8$ and $n=15$ are presented in Appendix~\ref{sec:expappendix}. In Figure~\ref{fig:guarantee-n12}, we compare the guarantee found by the respective optimization model to the guarantee obtained by fixing the reduced uncertainty set of each method, including k-means, in the iterative approximation (iv), which we refer to as Guarantee-T4. The Guarantee-T4 value of App-3 is nearly identical to the value found by App-3 itself, while the lines for App-4 and its Guarantee-T4 overlap. The best guarantee for $p=1$ is achieved by App-4, but this method performs worse than App-3 due to the time limit we imposed. In Figure~\ref{fig:optgap-n12}, we assess the quality of the solutions found by each method by plotting the actual ratio to the optimal solution. This means that we take an optimal solution to the robust problem with approximate uncertainty set and calculate its robust objective value with respect to the original uncertainty set. We compare this value to the optimal objective value of the robust problem with the original uncertainty set. This is denoted as UB ratio. Here, App-3 provides the best solution in most cases. The relatively poor performance of App-4 in comparison to App-3 in this experiment is likely due to the computational challenge that these models pose. We note a similarity to the behavior in $p$ that has been observed in the previous case of scenario reduction: The larger the value of $p$, the better the performance of our methods, when we exclude small values of $p$. Again, this improvement eludes the previous aggregation methods. We also note that scenario approximation can achieve very strong bounds, even when exact reduction is ineffective.

\begin{figure}[htbp]
	\centering
		\begin{subfigure}{0.45\textwidth}
         		\centering
         			\includegraphics[width=\textwidth]{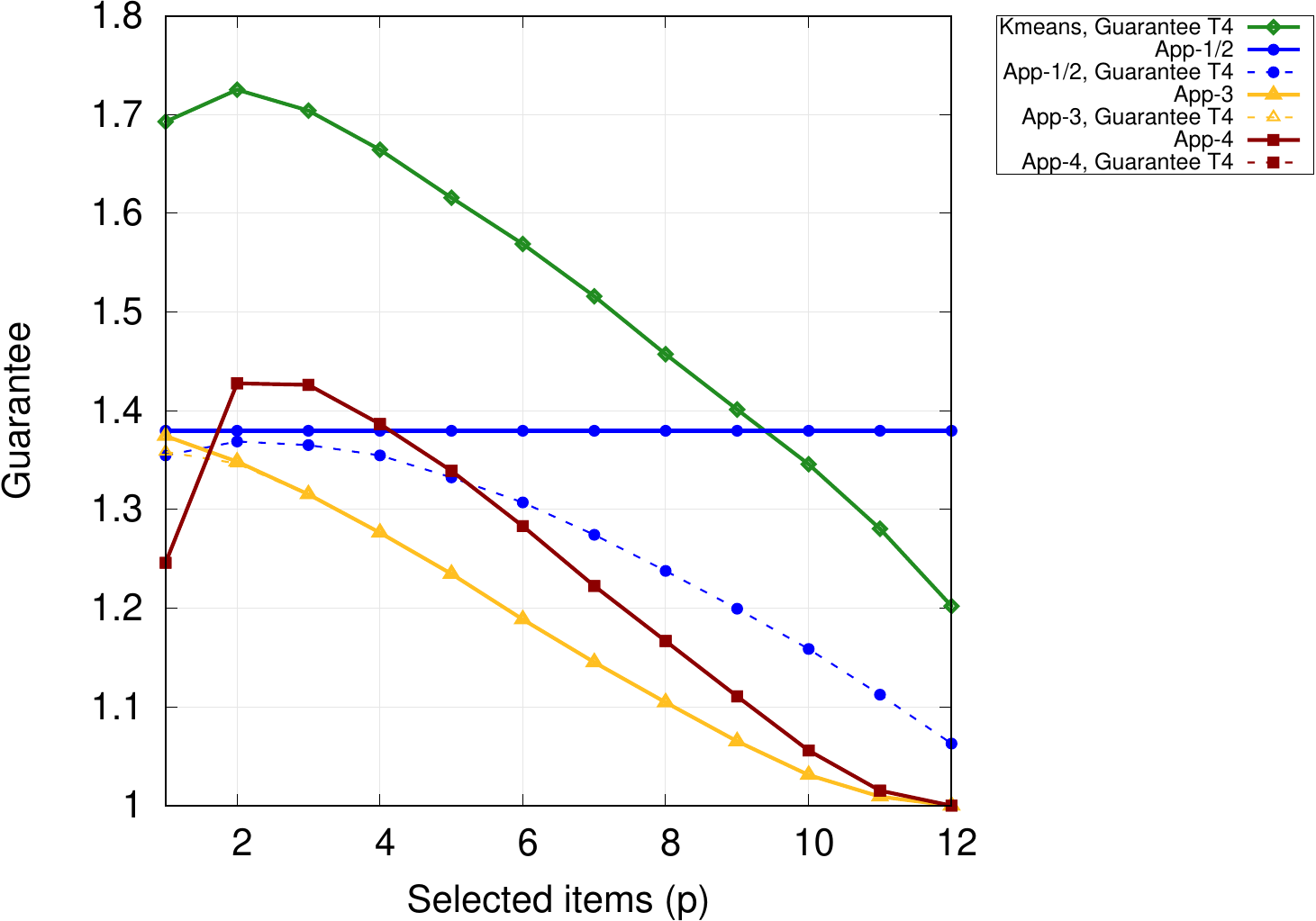}
         		\caption{Guarantee ($n=12$)}\label{fig:guarantee-n12}
		\end{subfigure}
     		\begin{subfigure}{0.45\textwidth}
         		\centering
         			\includegraphics[width=\textwidth]{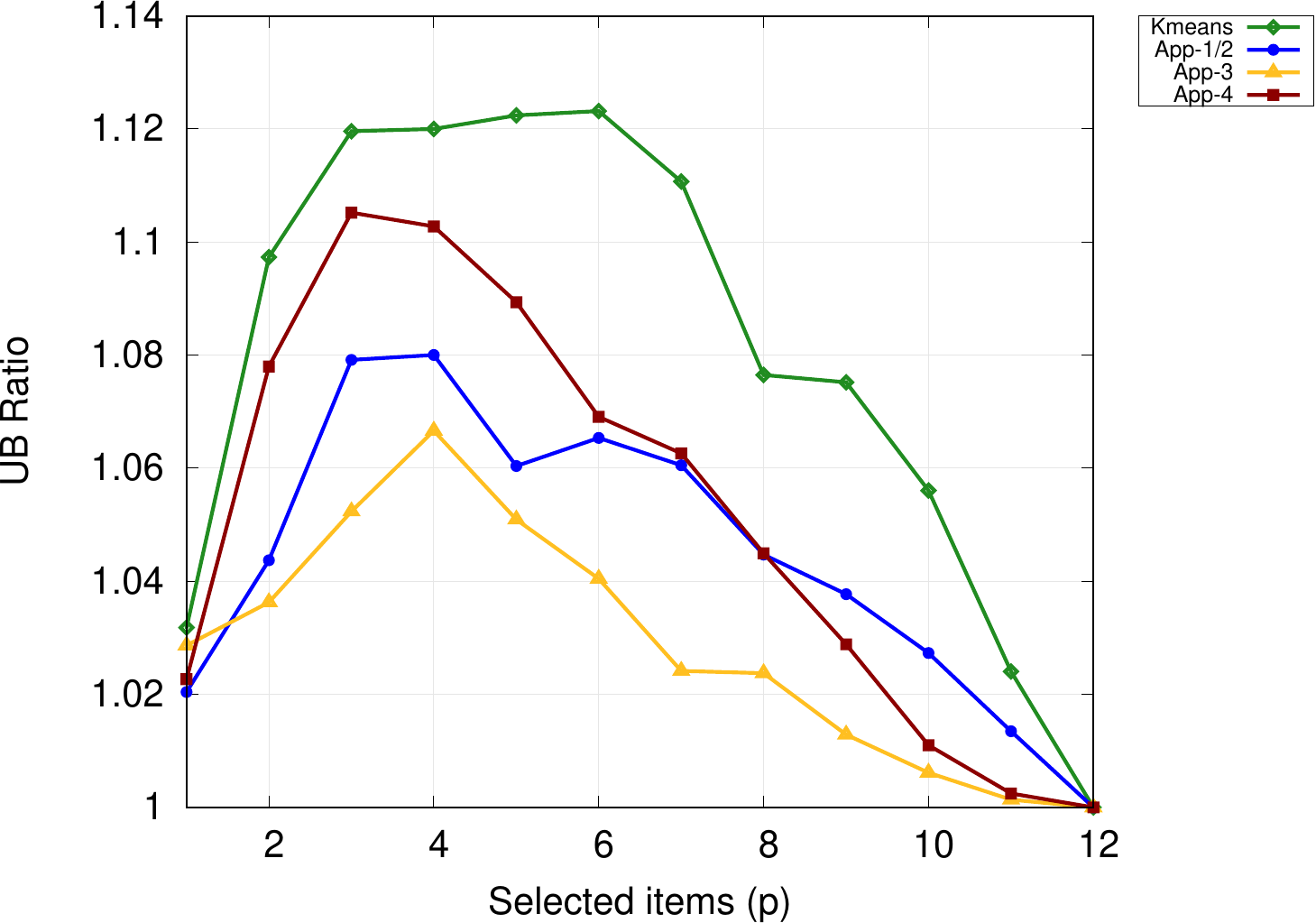}
         		\caption{UB Ratio ($n=12$)}\label{fig:optgap-n12}
     		\end{subfigure}
        		\caption{Selection - performance of small instances}\label{fig:sel-performance-small}
\end{figure}

In Figure~\ref{fig:sel-time-performance-small}, we present the computation times, divided into three categories: the time to find the corresponding approximate scenario set (labeled as aggregation time in Figure~\ref{fig:aggtime-n12}), the time to solve the resulting reduced robust optimization problem (labeled as solution time in Figure~\ref{fig:soltime-n12}), and the process time, which is the sum of both, compared to the time to solve the original robust optimization problem (Figure~\ref{fig:protime,n12}). The solution time for all methods is very fast, since the size of the reduced problem is small, making the aggregation time and process time appear almost the same. App-4 is the slowest approach and hits the time limit for $p=6$. Given the small size of the problem in this case, CPLEX outperforms all methods.

\begin{figure}[htbp]
\centering
\begin{subfigure}{0.32\textwidth}
\centering
\includegraphics[width=\textwidth]{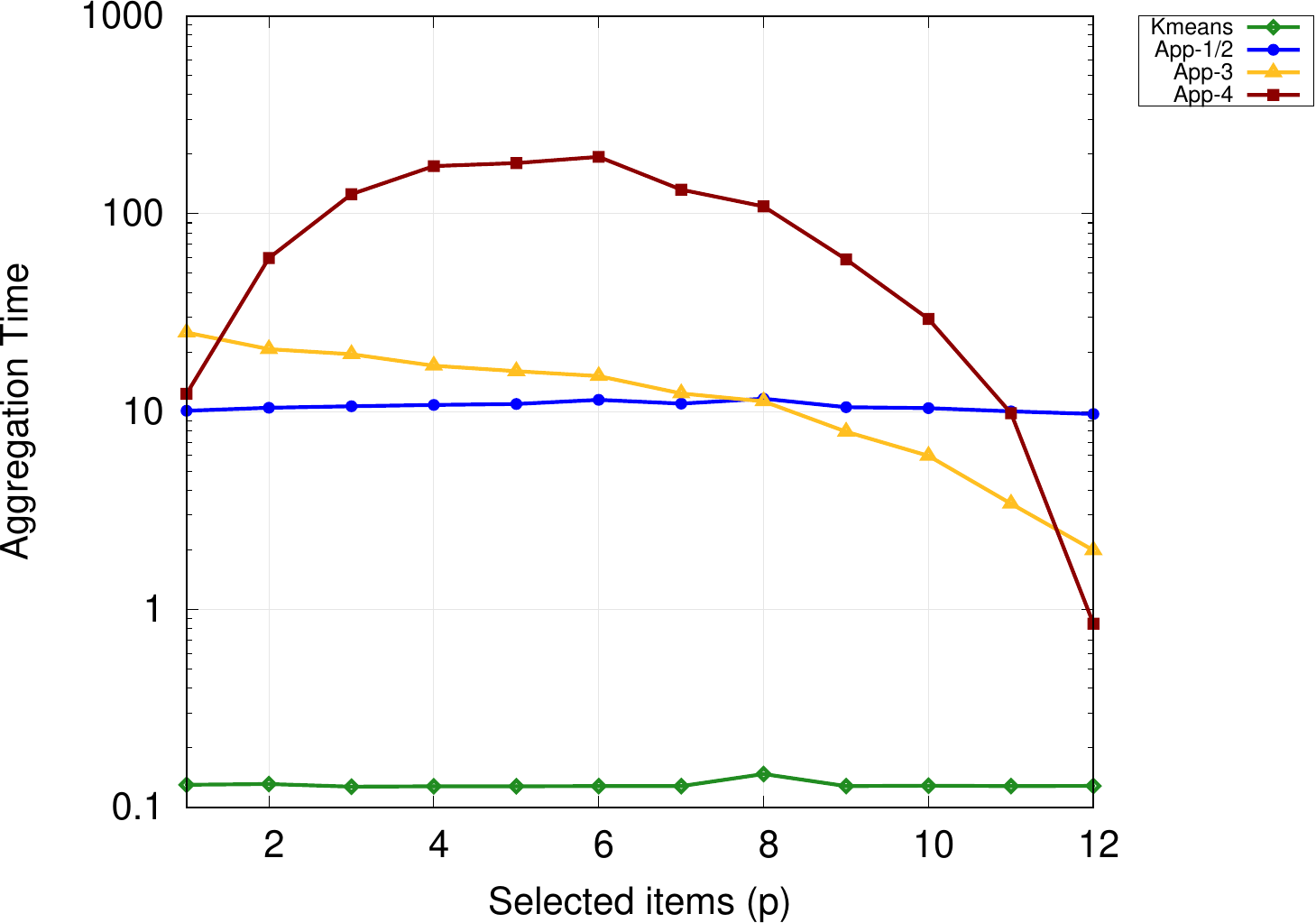}
\caption{Aggregation Time}\label{fig:aggtime-n12}
\end{subfigure}
\begin{subfigure}{0.32\textwidth}
\centering
\includegraphics[width=\textwidth]{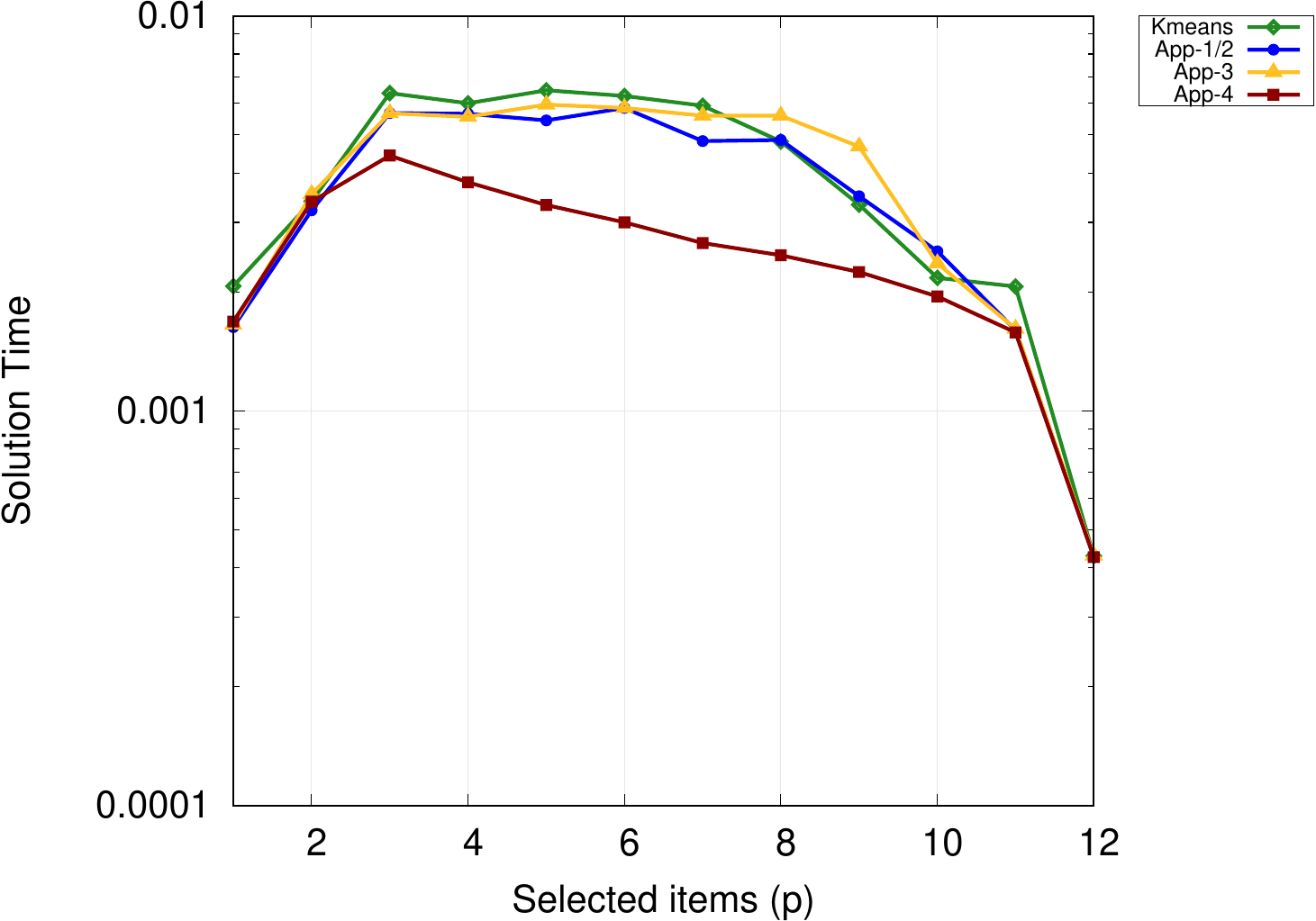}
\caption{Solution Time}\label{fig:soltime-n12}
\end{subfigure}
\begin{subfigure}{0.32\textwidth}
\centering
\includegraphics[width=\textwidth]{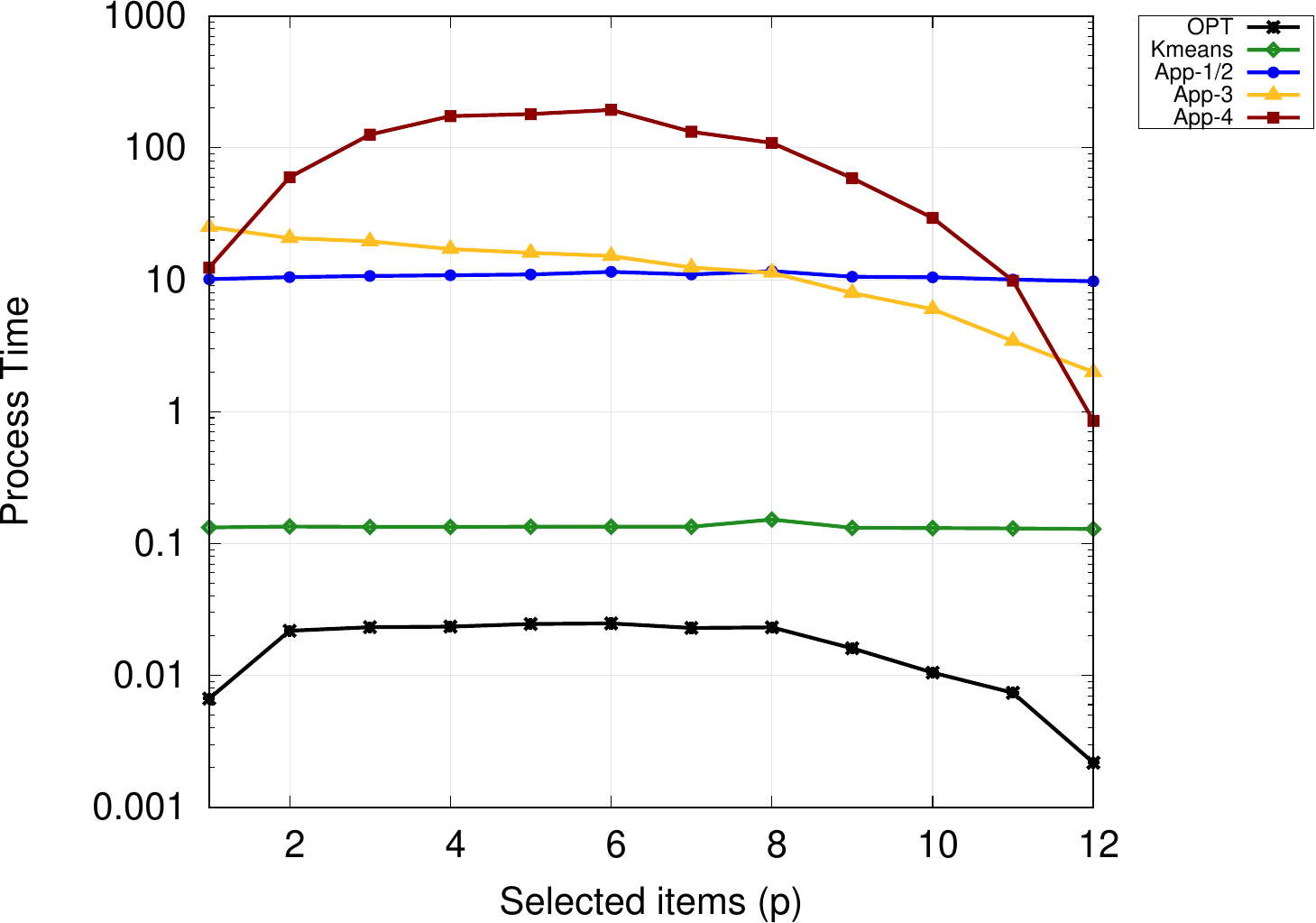}
\caption{Process}\label{fig:protime,n12}
\end{subfigure}
\caption{Selection - time performance of small instances}\label{fig:sel-time-performance-small}
\end{figure}

We now turn to large-scale instances, where we discuss $n=50$ as a representative case, and present additional results on $n=20$, $n=30$, and $n=100$ in the appendix. We do not use App-4 here, due to its large computation times. Similarly to the small instances, in Figure~\ref{fig:sel-performance-larg}, we illustrate the quality (guarantee and actual ratio) of the approximated uncertainty sets. In this scenario, App-3 consistently offers the best guarantee; however, the quality of its solution sometimes exceeds that of App-1, while at other times it is dominated.

\begin{figure}
	\centering
		\begin{subfigure}{0.45\textwidth}
         		\centering
         			\includegraphics[width=\textwidth]{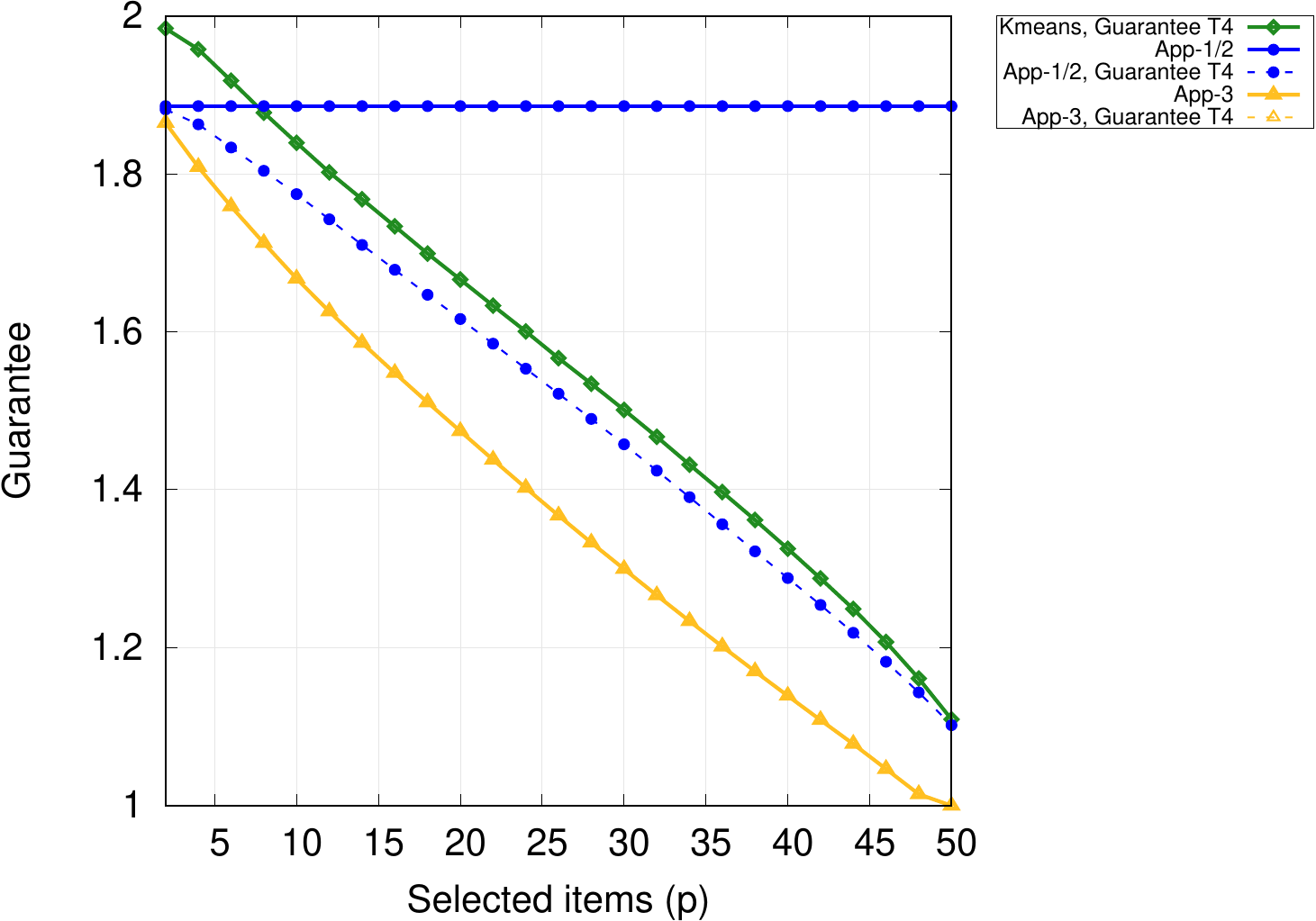}
         		\caption{Guarantee ($n=50$)}\label{fig:guarantee-t4-50}
		\end{subfigure}
     		\begin{subfigure}{0.45\textwidth}
         		\centering
         			\includegraphics[width=\textwidth]{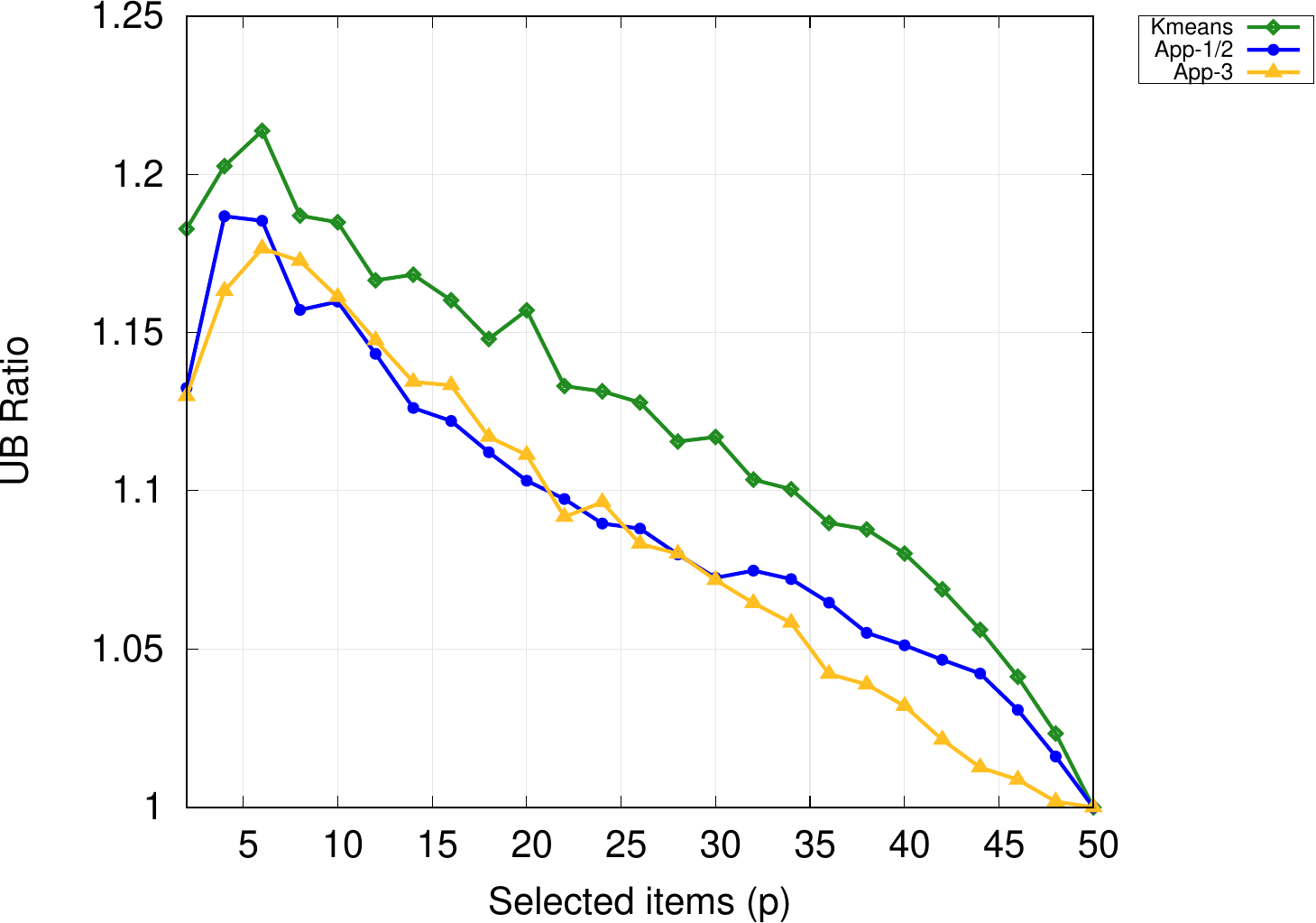}
         		\caption{UB Ratio ($n=50$)}\label{fig:ubratio-50}
     		\end{subfigure}
        		\caption{Selection - performance of large instances}\label{fig:sel-performance-larg}
\end{figure}

The impact of our methods can be emphasized by examining the computation times in Figure~\ref{fig:sel-time-performance-large}, where our methods demonstrate significant efficiency compared to solving the problem over the original uncertainty set.

\begin{figure}
	\centering
		\begin{subfigure}{0.32\textwidth}
         		\centering
         			\includegraphics[width=\textwidth]{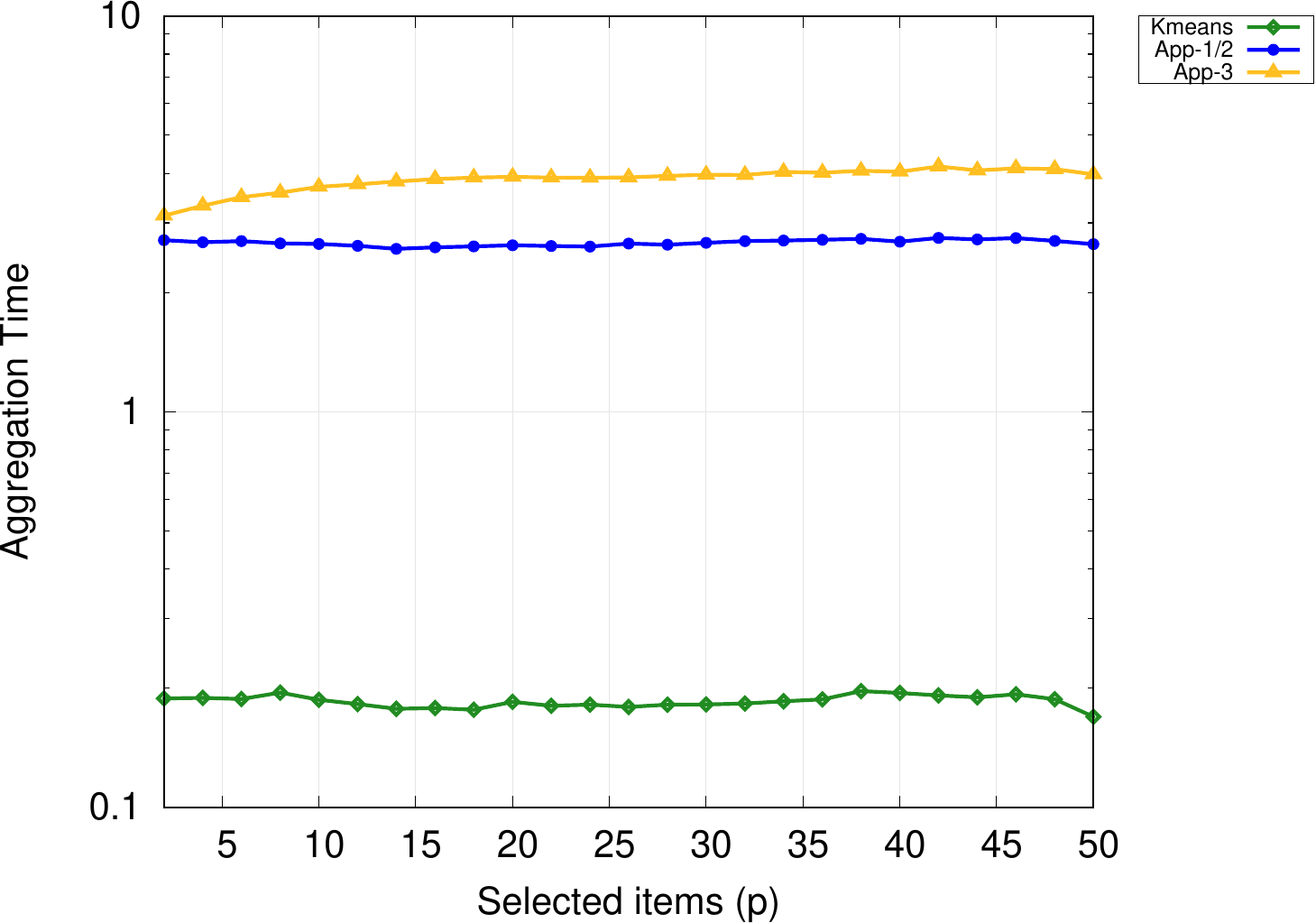}
         		\caption{Aggregation ($n=50$)}\label{fig:aggtime-n50}
     		\end{subfigure}
     		\begin{subfigure}{0.32\textwidth}
         		\centering
         			\includegraphics[width=\textwidth]{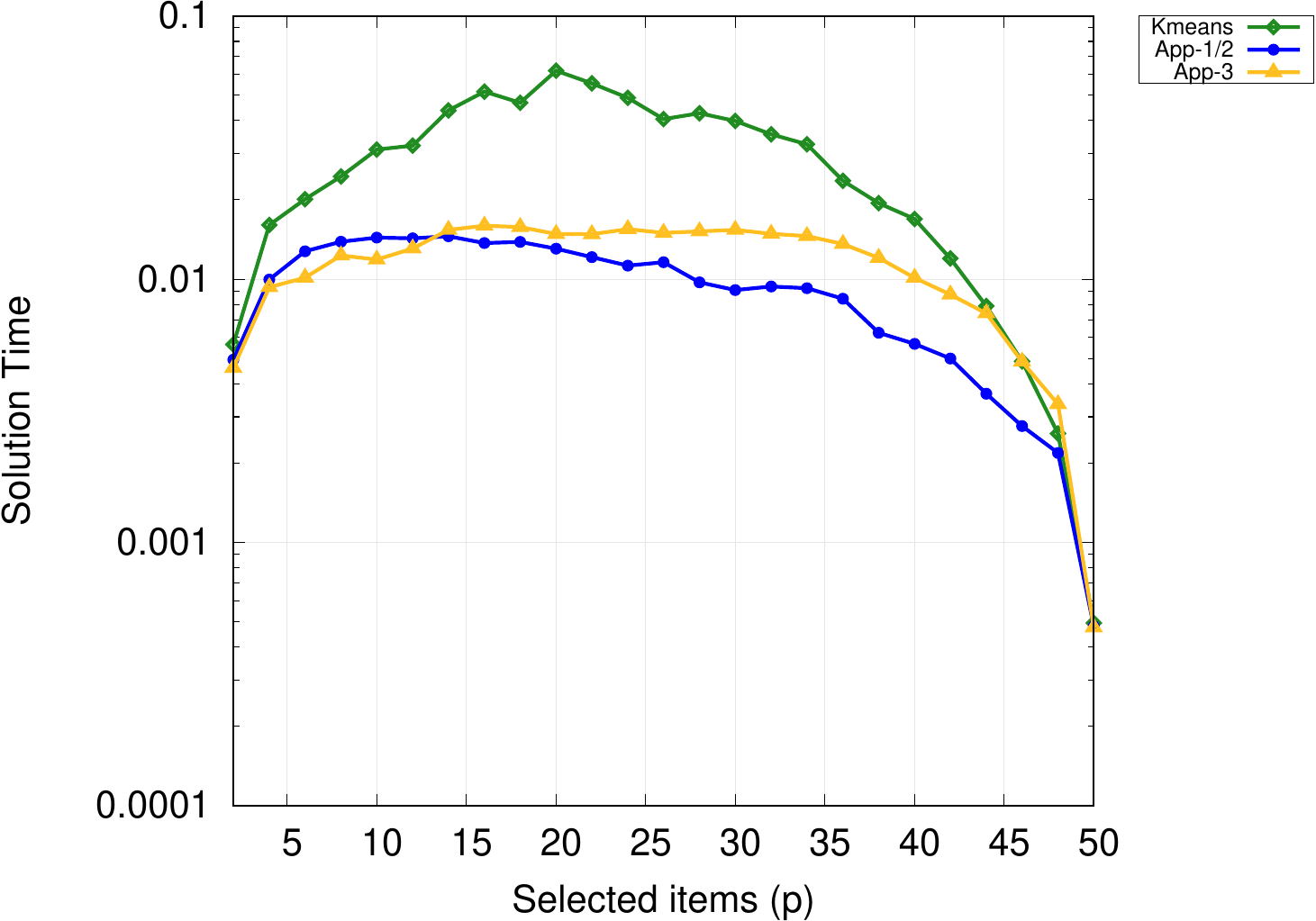}
         		\caption{Solution ($n=50$)}\label{fig:soltime-n50}
     		\end{subfigure}
     		\begin{subfigure}{0.32\textwidth}
         		\centering
         			\includegraphics[width=\textwidth]{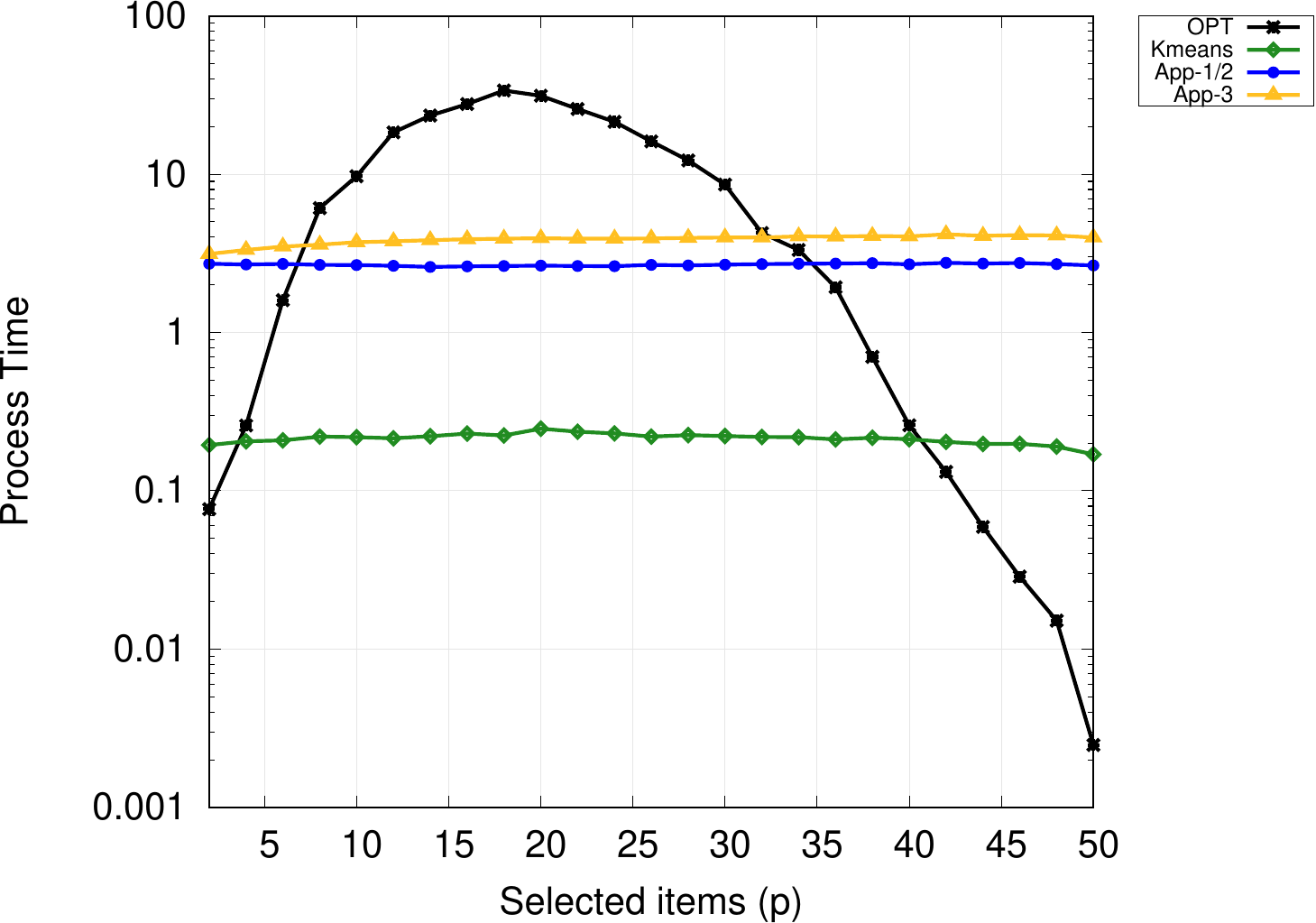}
         		\caption{Process ($n=50$)}\label{fig:protime,n50}
     		\end{subfigure}
        		\caption{Selection - time performance of large instances}\label{fig:sel-time-performance-large}
\end{figure}

\subsubsection{Shortest Path Results}

We now present the results for the shortest path problems. The quality of our methods for both settings with fixed $W=3$ is shown in Figure~\ref{fig:app-shp-performance-l}, and for fixed $L=4$ it is presented in Figure~\ref{fig:app-shp-performance-w}. In this experiment, besides Guarantee-T4, which works as in the selection experiments, we also include Guarantee-T3. Guarantee-T3 is the guarantee obtained by fixing the corresponding reduced uncertainty set into the type-(iii) model~(\ref{app3-start}-\ref{app3-end}) and finding its $t$.

In Figure~\ref{fig:app-shp-performance-l}, we observe that the iterative approximation (iv) yields better guarantees for larger cases, and Guarantee-T4 outperform Guarantee-T3 for all methods (Figures~\ref{fig:app-shp-gua3-l} and \ref{fig:app-shp-gua4-l}). Regarding solution quality, App-1/2 and App-3 perform better than the other two methods.

The results for the setting with fixed $L$, shown in Figure~\ref{fig:app-shp-performance-w}, illustrate different trends, likely due to the increasing value of $n$. Here, App-1/2 and App-3 provide better guarantees (Figures~\ref{fig:app-shp-gua3-w} and \ref{fig:app-shp-gua4-w}), while other behaviors remain similar to the case with fixed $W$.

\begin{figure}[htbp]
	\centering
		\begin{subfigure}{0.32\textwidth}
         		\centering
         			\includegraphics[width=\textwidth]{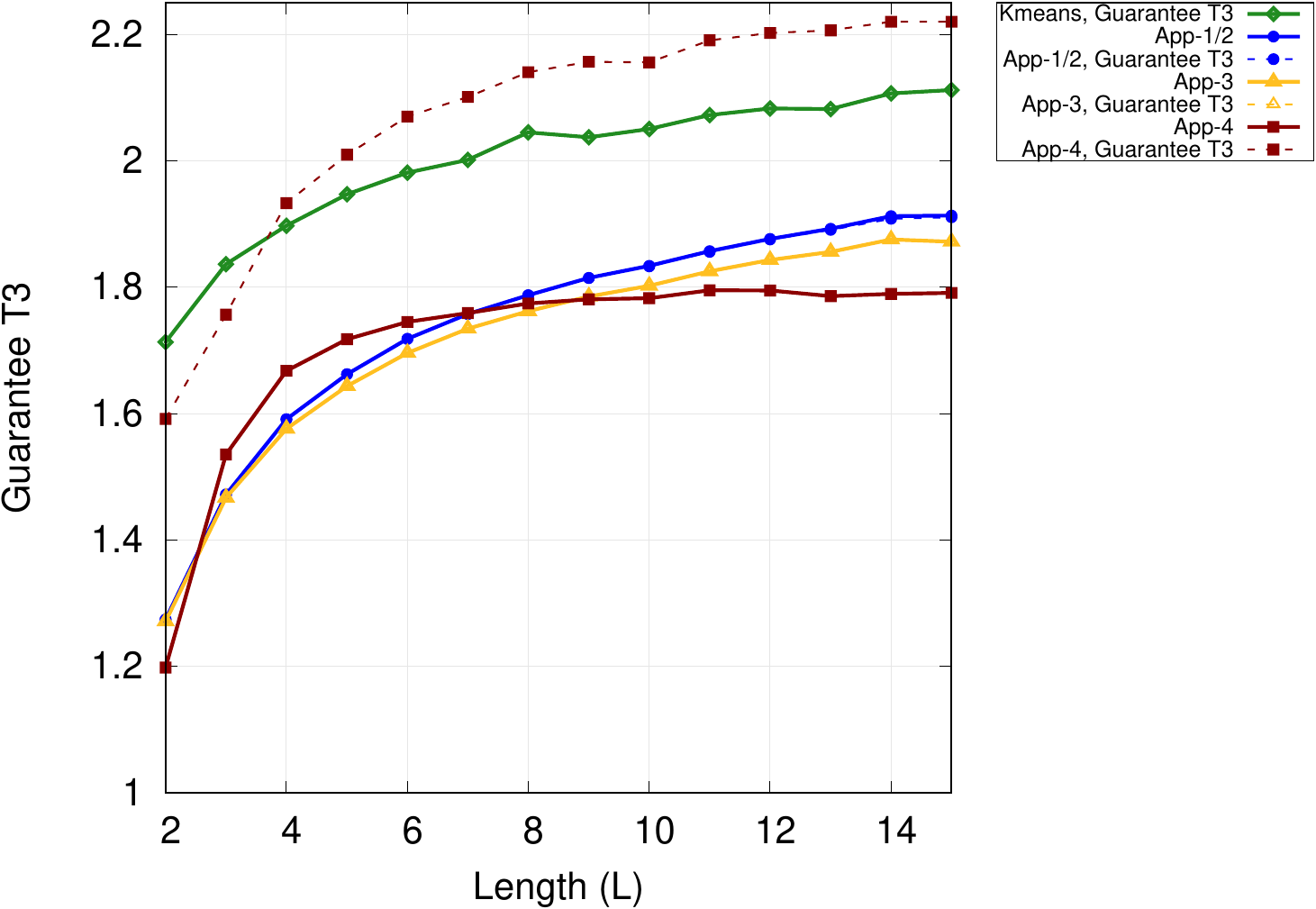}
         		\caption{Guarantee T3}\label{fig:app-shp-gua3-l}
     		\end{subfigure}
     		\begin{subfigure}{0.32\textwidth}
         		\centering
         			\includegraphics[width=\textwidth]{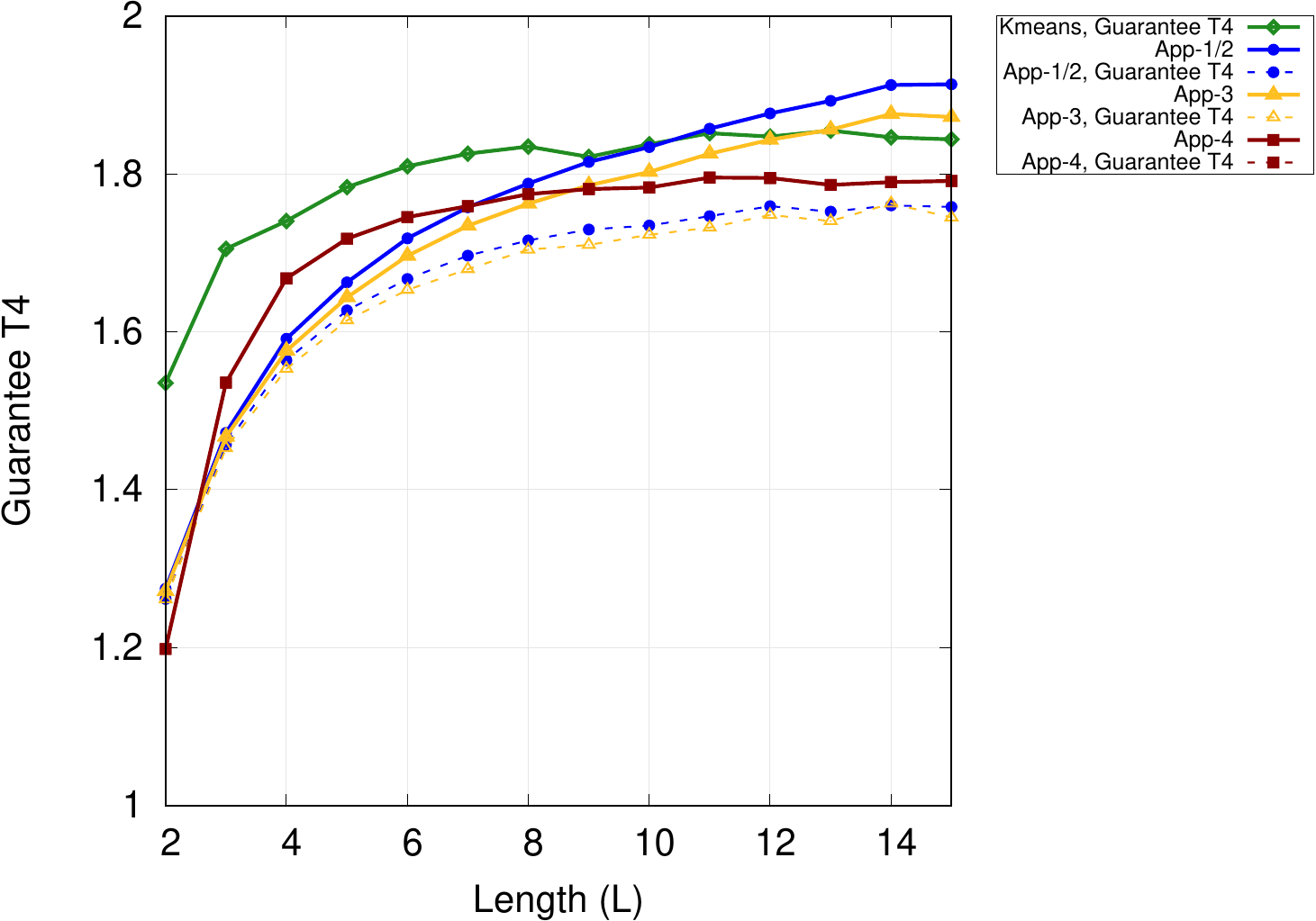}
         		\caption{Guarantee T4}\label{fig:app-shp-gua4-l}
     		\end{subfigure}
		\begin{subfigure}{0.32\textwidth}
         		\centering
         			\includegraphics[width=\textwidth]{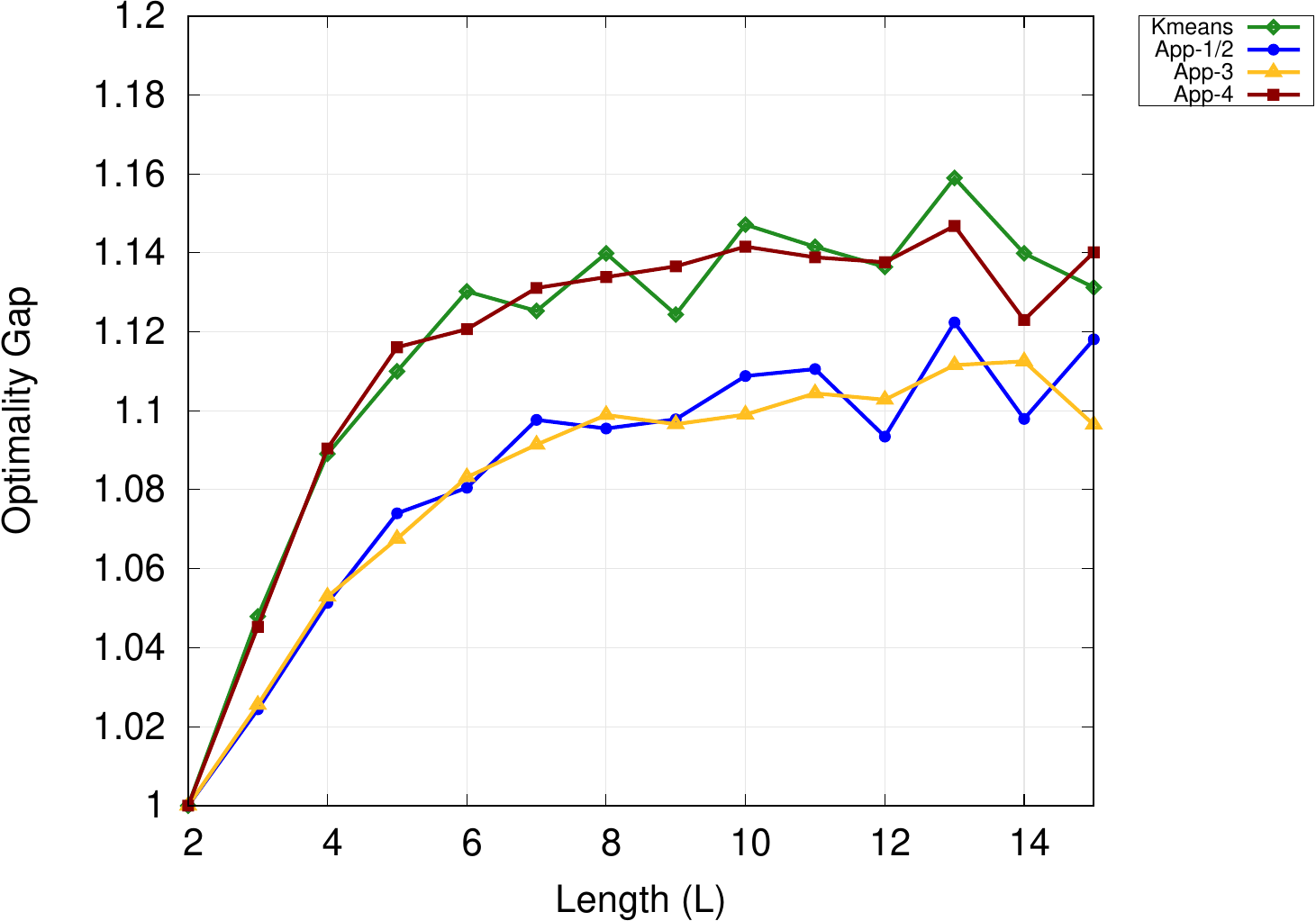}
         		\caption{UB Ratio}\label{fig:app-shp-ubratio-l}
     		\end{subfigure}
        		\caption{Shortest path - performance with fixed width ($W=3$)}\label{fig:app-shp-performance-l}
\end{figure}

\begin{figure}[htbp]
	\centering
		\begin{subfigure}{0.32\textwidth}
         		\centering
         			\includegraphics[width=\textwidth]{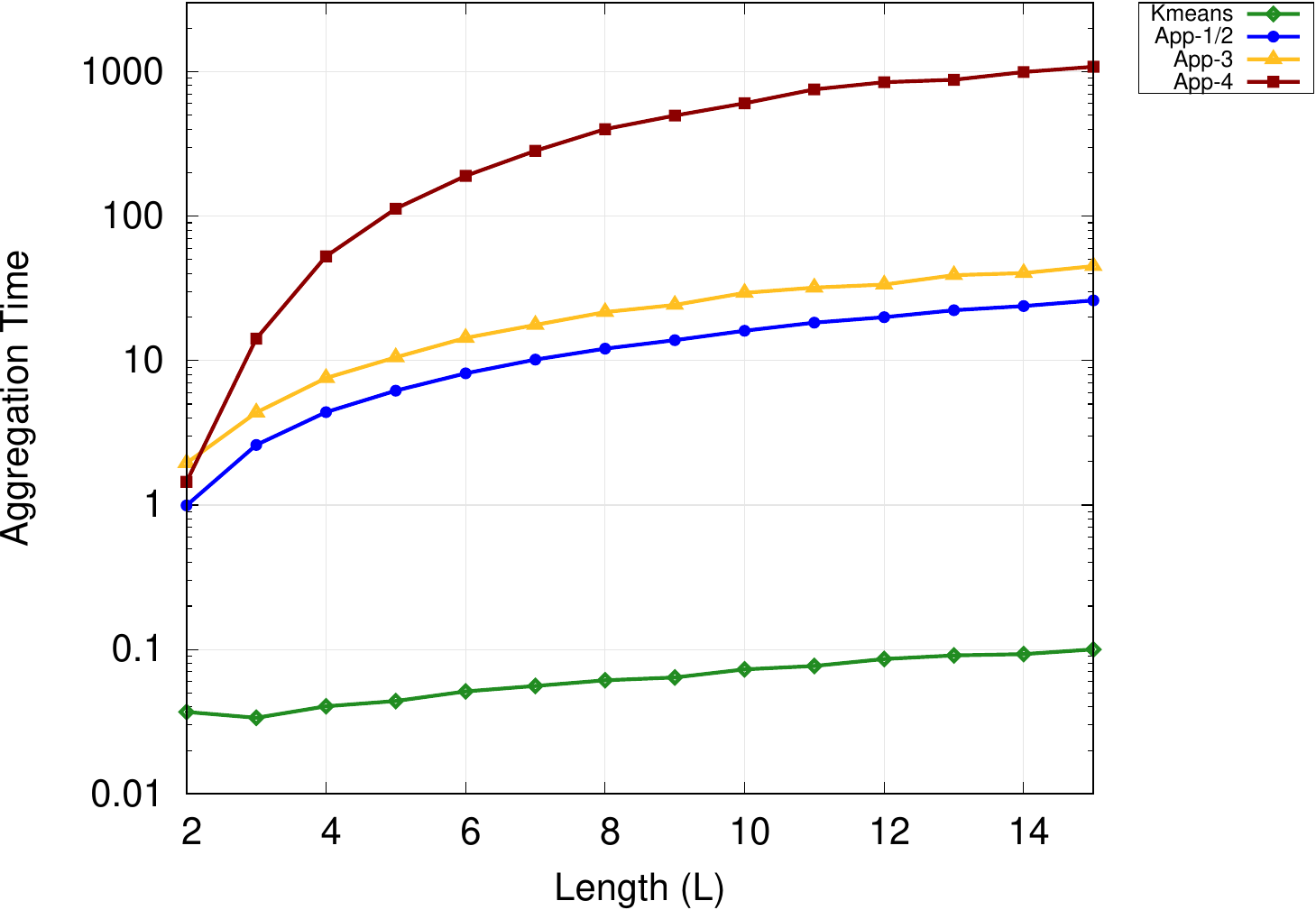}
         		\caption{Aggregation time}\label{fig:app-shp-aggtime-l}
     		\end{subfigure}
     		\begin{subfigure}{0.32\textwidth}
         		\centering
         			\includegraphics[width=\textwidth]{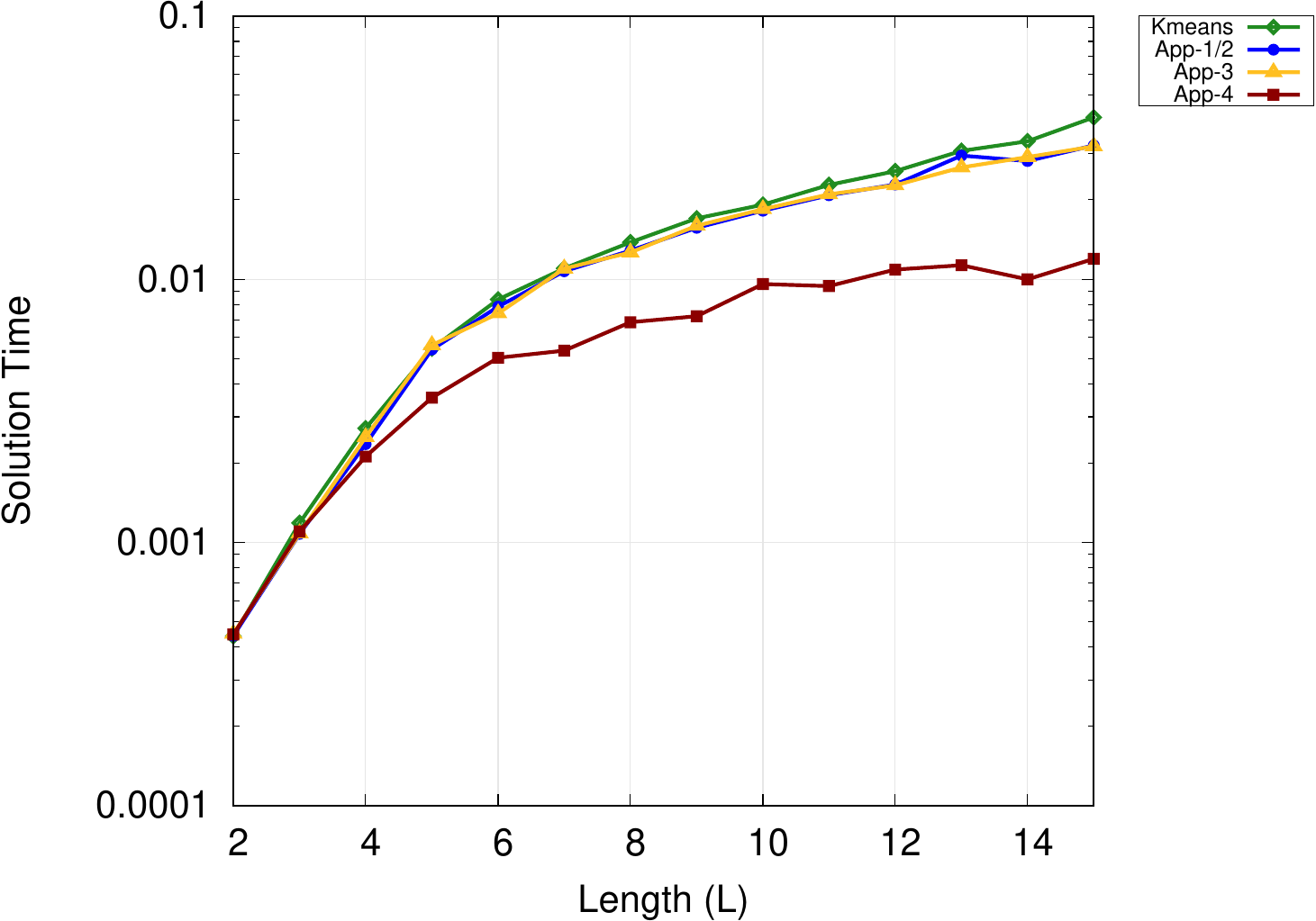}
         		\caption{Solution time}\label{ffig:app-shp-soltime-l}
     		\end{subfigure}
		\begin{subfigure}{0.32\textwidth}
         		\centering
         			\includegraphics[width=\textwidth]{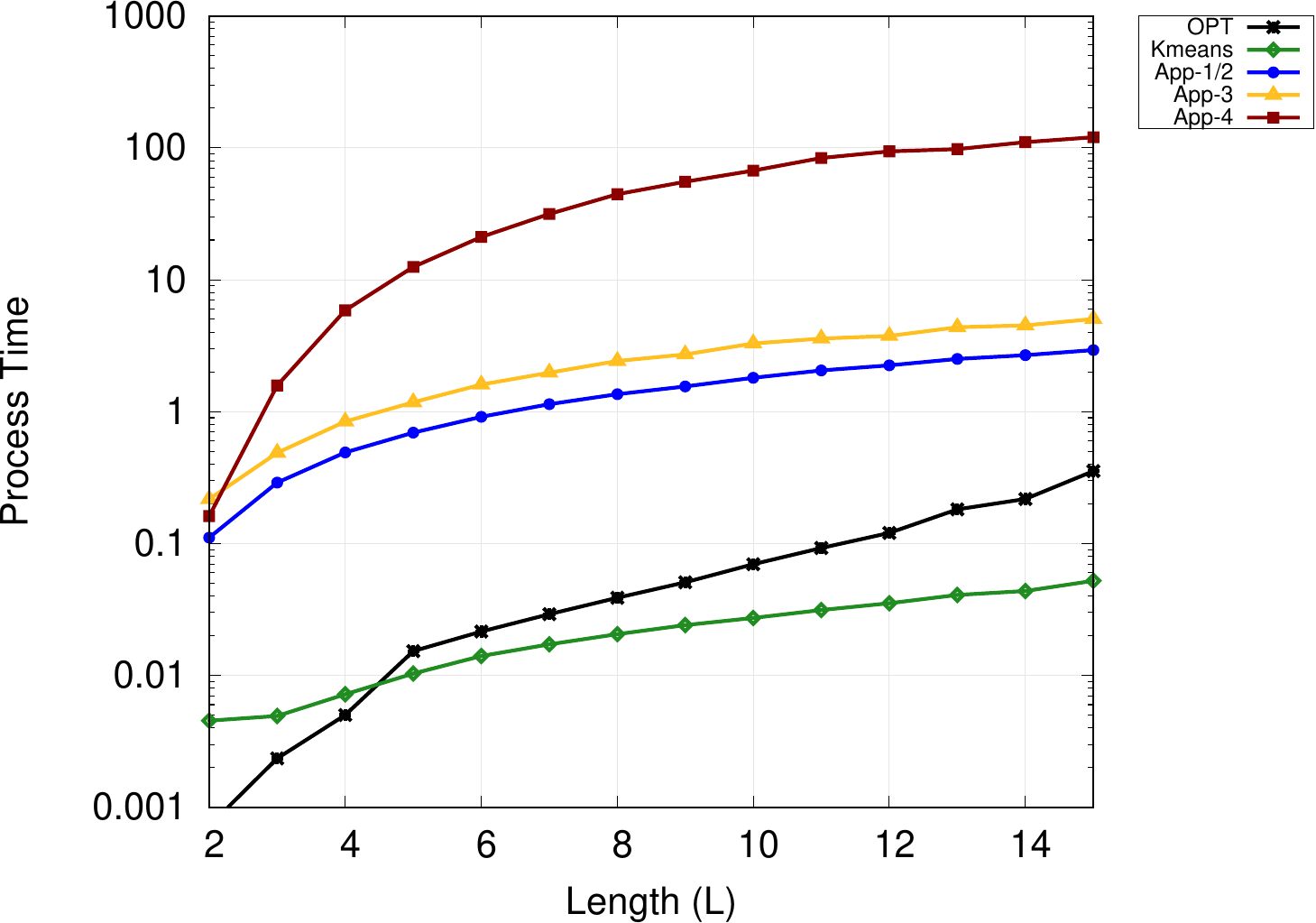}
         		\caption{Process time}\label{fig:app-shp-protime-l}
     		\end{subfigure}
        		\caption{Shortest path - time performance with fixed width ($W=3$)}\label{fig:app-shp-time-performance-l}
\end{figure}

Regarding time performance, all methods perform similarly for both settings. The iterative approximation (iv) has the highest aggregation time but the lowest solution time. Additionally, in Figures~\ref{fig:app-shp-protime-l} and \ref{fig:app-shp-protime-w}, the process time is calculated as the sum of both aggregation and solution times for all corresponding $s$-$t$ paths, divided by the number of $s$-$t$ paths. It also includes the time to solve the robust optimization model directly, using the original scenario set. We note that our approaches require more time when applied to a single instance than solving the problem directly. However, the motivation for our method is that it needs to be done only once, and can then be applied multiple times to a set of instances.

\begin{figure}
	\centering
		\begin{subfigure}{0.32\textwidth}
         		\centering
         			\includegraphics[width=\textwidth]{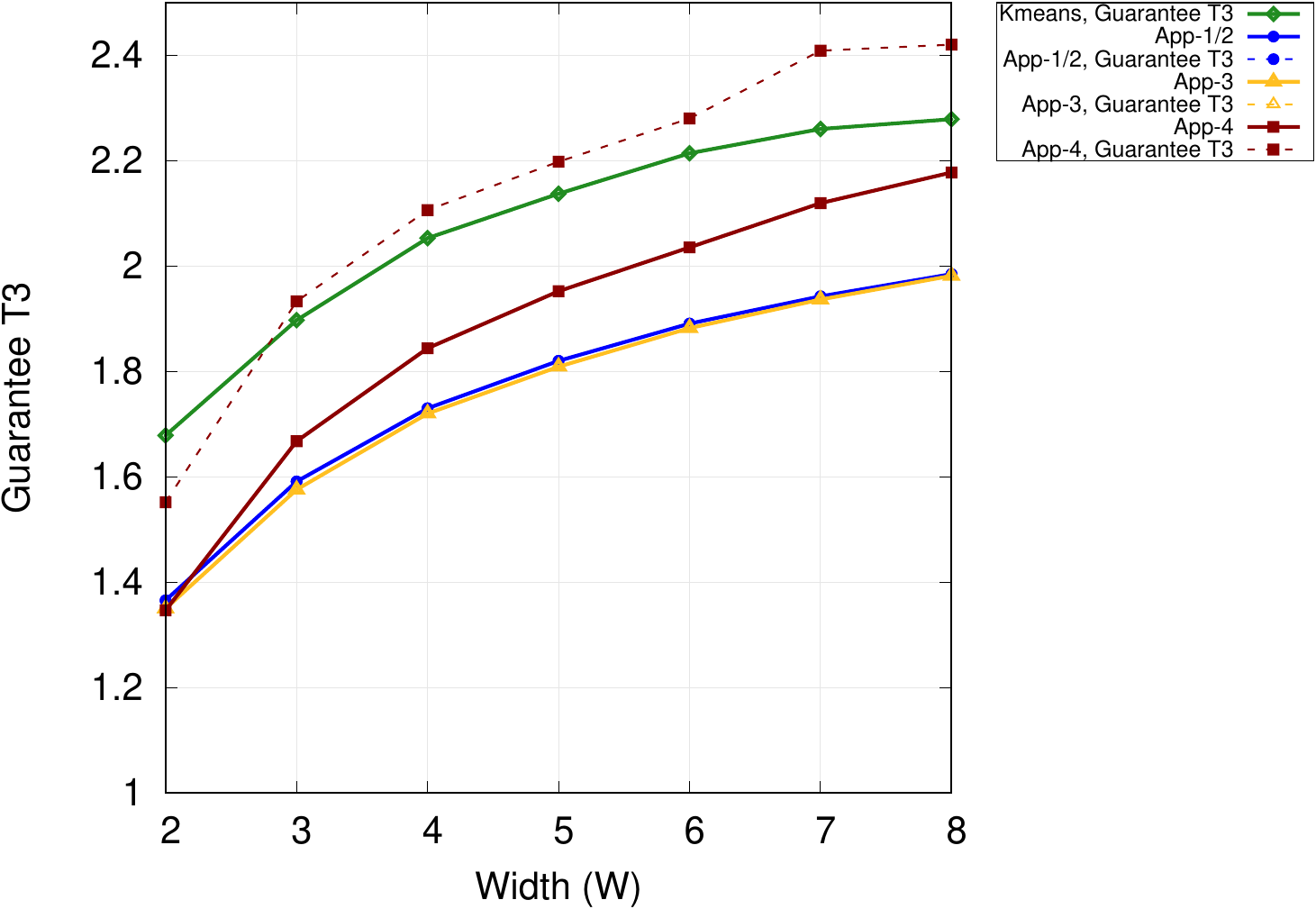}
         		\caption{Guarantee T3}\label{fig:app-shp-gua3-w}
     		\end{subfigure}
     		\begin{subfigure}{0.32\textwidth}
         		\centering
         			\includegraphics[width=\textwidth]{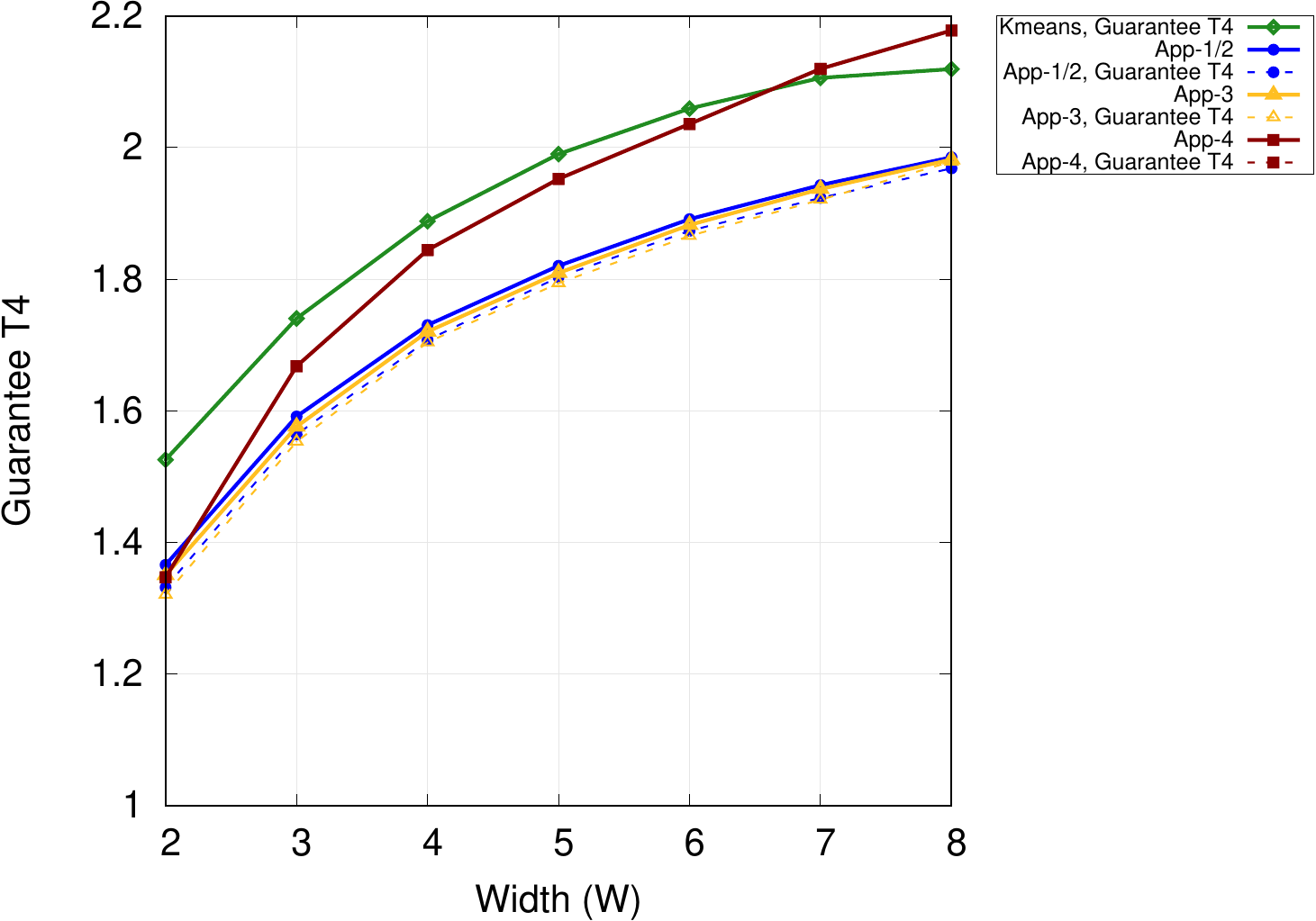}
         		\caption{Guarantee T4}\label{fig:app-shp-gua4-w}
     		\end{subfigure}
		\begin{subfigure}{0.32\textwidth}
         		\centering
         			\includegraphics[width=\textwidth]{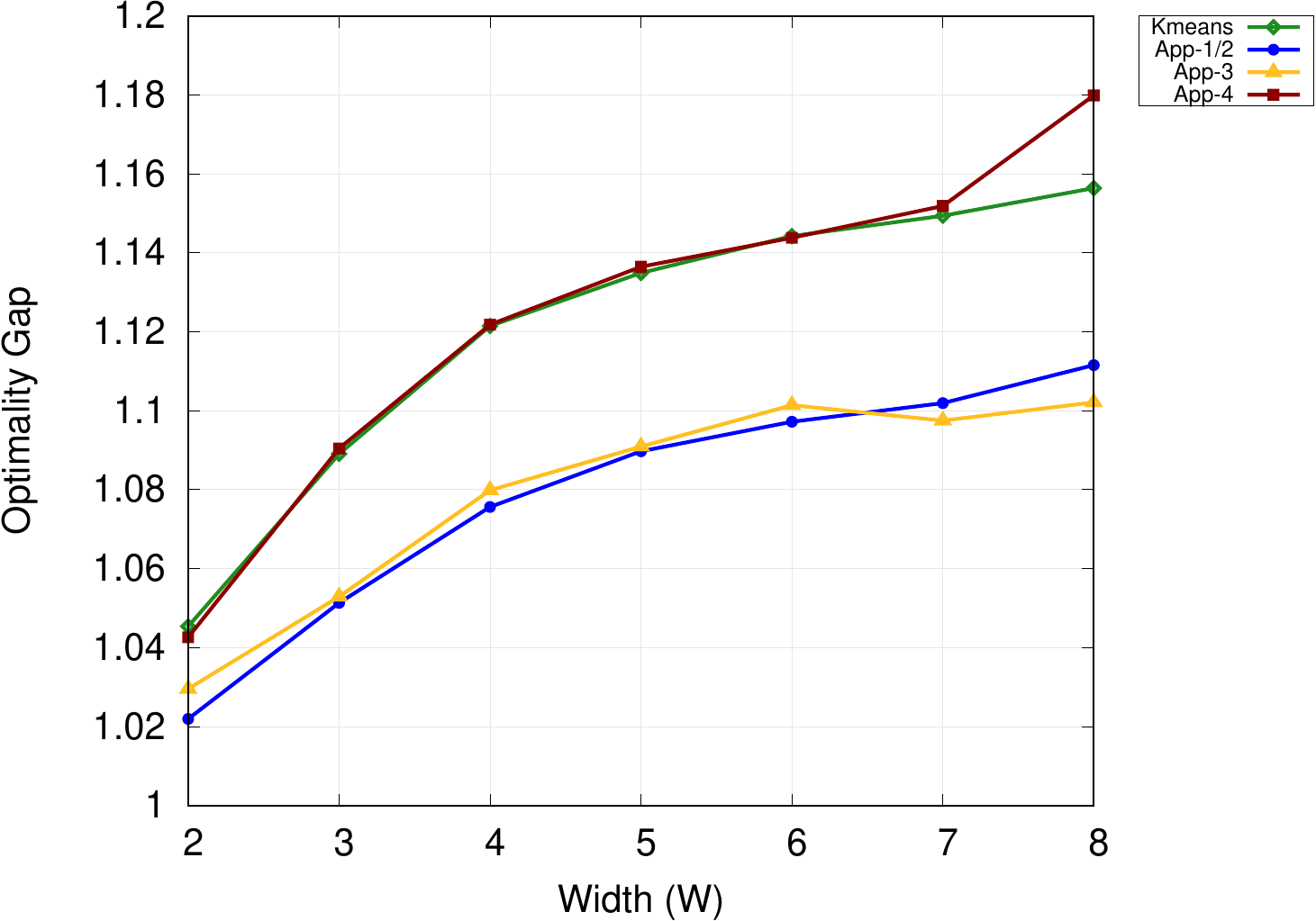}
         		\caption{UB Ratio}\label{fig:app-shp-ubratio-w}
     		\end{subfigure}
        		\caption{Shortest path - performance with fixed length ($L=4$)}\label{fig:app-shp-performance-w}
\end{figure}

 \begin{figure}
	\centering
		\begin{subfigure}{0.32\textwidth}
         		\centering
         			\includegraphics[width=\textwidth]{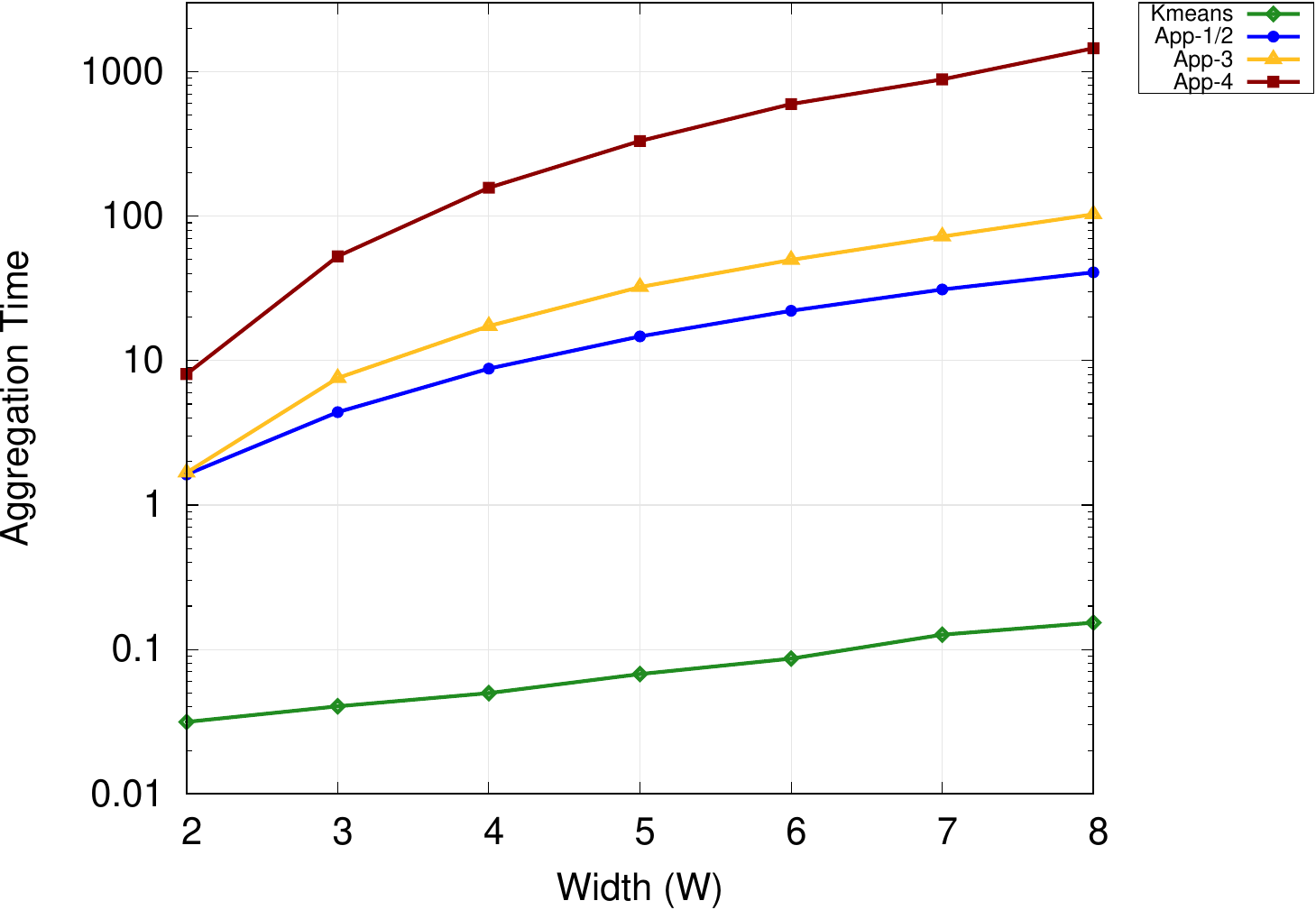}
         		\caption{Aggregation time}\label{fig:app-shp-aggtime-w}
     		\end{subfigure}
     		\begin{subfigure}{0.32\textwidth}
         		\centering
         			\includegraphics[width=\textwidth]{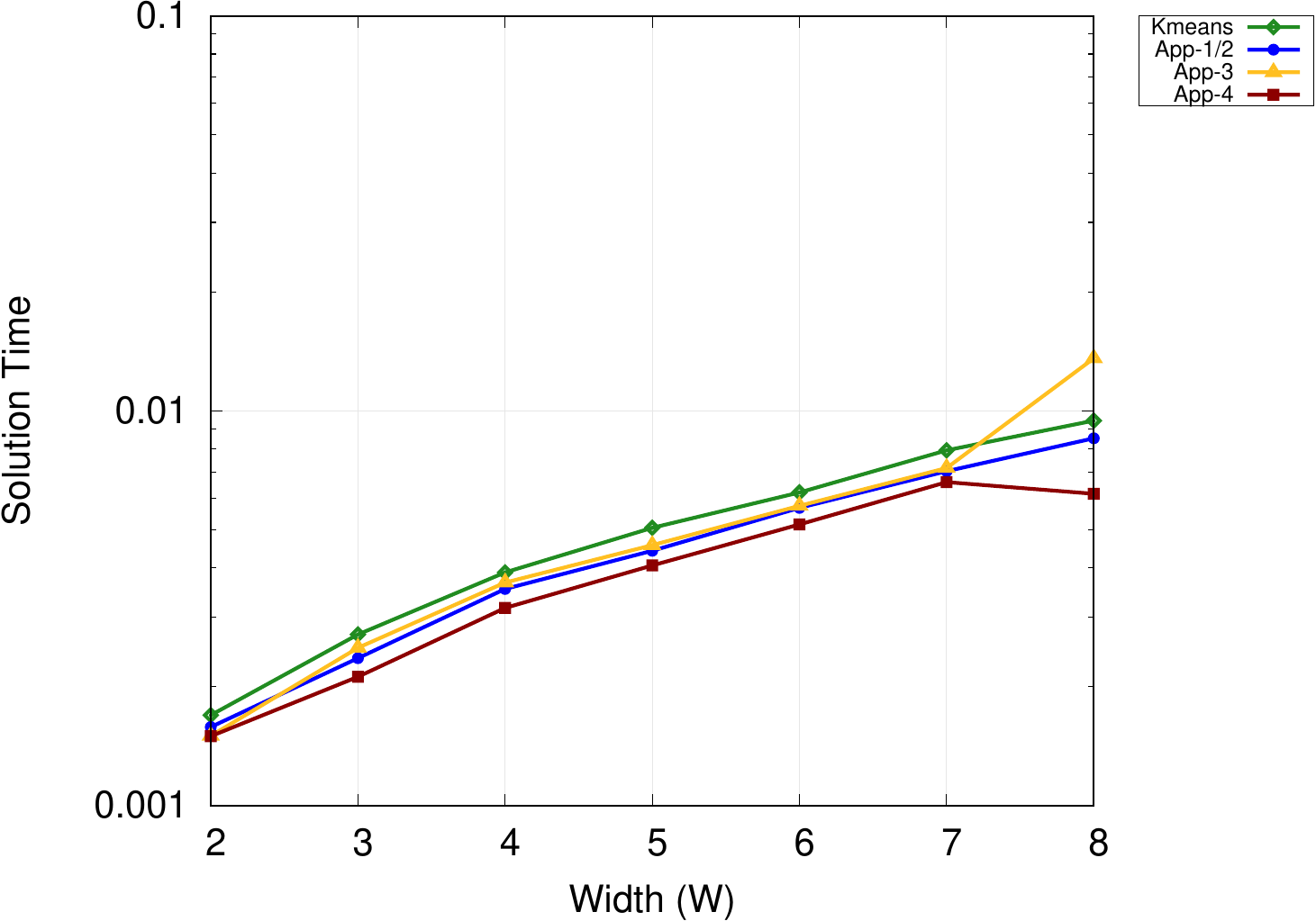}
         		\caption{Solution time}\label{ffig:app-shp-soltime-w}
     		\end{subfigure}
		\begin{subfigure}{0.32\textwidth}
         		\centering
         			\includegraphics[width=\textwidth]{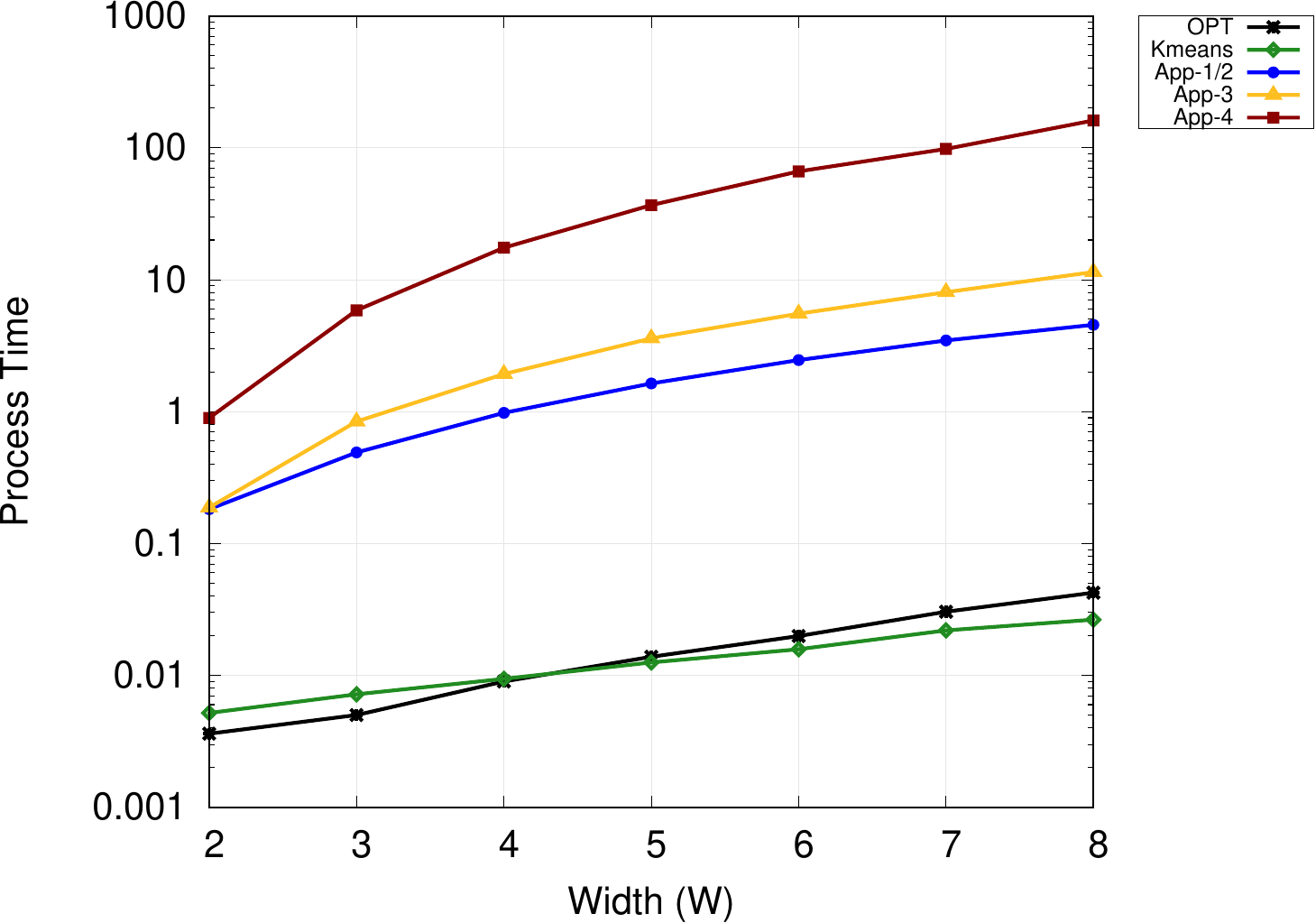}
         		\caption{Process time}\label{fig:app-shp-protime-w}
     		\end{subfigure}
        		\caption{Shortest path - time performance with fixed length ($L=4$)}\label{fig:app-shp-time-performance-w}
\end{figure}

\section{Conclusions}
\label{sec:conclusions}

Robust optimization problems tend to become harder with an increasing number of scenarios, both from a theoretical and practical point of view. Thus we have an interest in constructing uncertainty sets from discrete data observations with as few scenarios as possible. In this paper we studied the scenario reduction problem (which aims at removing unnecessary scenarios from the problem) and the scenario approximation problem (which aims at finding a representative set that results in good robust solutions). Our framework takes the set of feasible solutions into account, which guides the process of removing or representing scenarios. We presented a four types of sufficiency and approximation criteria and compared them theoretically. We discussed connections to conic orderings and introduced optimization models to find reduced or approximate uncertainty sets and guarantees. In our computational experiments using selection and shortest path instances, we showed the potential of our approach in reducing the size of uncertainty sets, in particular in comparison to a simple clustering method or when we ignore the set of feasible solutions in the process.

The framework presented in this paper takes a discrete uncertainty and represents it by another discrete uncertainty set. In further research we will consider more general representations that make use of different types of uncertainty sets. For example, using the resulting approximation guarantee as a guiding principle, we will study how to represent discrete observations using budgeted or ellipsoidal uncertainty sets.

\newcommand{\etalchar}[1]{$^{#1}$}

\newpage

\appendix

\section{Extensions of Domination Types}\label{sec:appendix}

We show how the dominance criteria discussed in Section~\ref{sec:reduction} extend to non-linear objective functions $f$. For uncertainty sets $\cU' \subseteq \cU = \{\pmb{c}^1,\ldots,\pmb{c}^N\}\subseteq \mathbb{R}^n_+$ and a set of feasible solutions $\X\subseteq\mathbb{R}^n_+$, we define:
\begin{itemize}
\item sufficiency type (i):
\[ \forall \pmb{c}\in \cU \ \exists \pmb{c}'\in\cU' : \pmb{c}' \ge \pmb{c} \]

\item sufficiency type (ii):
\[ \forall \pmb{c}\in\cU, \pmb{\alpha}\in\mathbb{R}^n_+\ \exists \pmb{c}'\in\cU' : \pmb{\alpha}^\top \pmb{c}' \ge \pmb{\alpha}^\top \pmb{c} \]

\item generalized sufficiency type (iii):
\[ \forall \pmb{c}\in\cU\ \exists \pmb{c}'\in\cU' \ \forall \pmb{x}\in\X : f(\pmb{x},\pmb{c}') \ge f(\pmb{x},\pmb{c}) \]

\item generalized sufficiency type (iv):
\[ \forall \pmb{c}\in\cU, \pmb{x}\in\X\ \exists \pmb{c}'\in\cU' : f(\pmb{x},\pmb{c}') \ge f(\pmb{x},\pmb{c})  \]

\end{itemize}

\begin{theorem}
Let $\cU'\subseteq \cU$. Set $\cU'$ is a sufficient set if one of the following conditions holds:
\begin{itemize}
\item set $\cU'$ fulfills sufficiency type (i), and $f(\pmb{x},\pmb{c})$ is monotonically increasing in $\pmb{c}$ for all $\pmb{x}$,
\item set $\cU'$ fulfills sufficiency type (ii), and $f(\pmb{x},\pmb{c})$ is monotonically increasing and concave in $\pmb{c}$ for all $\pmb{x}$,
\item set $\cU'$ fulfills one of the generalized sufficiency types (iii) or (iv).
\end{itemize}
\end{theorem}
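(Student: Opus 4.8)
The plan is to reuse the case-by-case structure of the proof of Theorem~\ref{th:sufficient}. Because $\cU'\subseteq\cU$, we automatically have $\max_{\pmb{c}\in\cU'} f(\pmb{x},\pmb{c})\le\max_{\pmb{c}\in\cU} f(\pmb{x},\pmb{c})$ for every $\pmb{x}\in\X$, so in each of the four cases it suffices to fix $\pmb{x}\in\X$, let $\pmb{c}^*\in\cU$ attain $\max_{\pmb{c}\in\cU} f(\pmb{x},\pmb{c})$, and produce some $\pmb{c}'\in\cU'$ with $f(\pmb{x},\pmb{c}')\ge f(\pmb{x},\pmb{c}^*)$. The generalized types (iii) and (iv) give this for free and need no structural assumption on $f$: type (iii) furnishes one $\pmb{c}'$ that beats $\pmb{c}^*$ at \emph{every} $\pmb{x}\in\X$, hence at the fixed $\pmb{x}$, while type (iv) already allows $\pmb{c}'$ to be chosen after $\pmb{x}$ and $\pmb{c}^*$ are fixed. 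For type (i) together with monotonicity, the definition yields $\pmb{c}'\in\cU'$ with $\pmb{c}'\ge\pmb{c}^*$, and monotonicity of $f(\pmb{x},\cdot)$ converts this componentwise inequality into $f(\pmb{x},\pmb{c}')\ge f(\pmb{x},\pmb{c}^*)$, exactly mirroring the linear proof with the final scalar step replaced by monotonicity.

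The substance of the proof, and the step I expect to be the main obstacle, is type (ii). The linear argument chose the direction $\pmb{\alpha}=\pmb{x}$ to turn $\pmb{\alpha}^\top\pmb{c}'\ge\pmb{\alpha}^\top\pmb{c}^*$ into $f(\pmb{x},\pmb{c}')\ge f(\pmb{x},\pmb{c}^*)$, which breaks once $f$ is nonlinear. Instead I would first rewrite type (ii) in its convex-combination form, repeating the minimax computation from the proof of Theorem~\ref{th:typeii-subset}: type (ii) holds if and only if, for each $\pmb{c}\in\cU$, there is a convex combination $\bar{\pmb{c}}=\sum_{k}\mu_k\hat{\pmb{c}}^k$ of elements of $\cU'$ with $\bar{\pmb{c}}\ge\pmb{c}$ componentwise. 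Applying this to $\pmb{c}^*$ and invoking monotonicity gives $f(\pmb{x},\pmb{c}^*)\le f(\pmb{x},\bar{\pmb{c}})$, reducing the claim to bounding $f(\pmb{x},\bar{\pmb{c}})$ by $\max_k f(\pmb{x},\hat{\pmb{c}}^k)$.

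This last bound is exactly a Jensen inequality $f(\pmb{x},\sum_k\mu_k\hat{\pmb{c}}^k)\le\sum_k\mu_k f(\pmb{x},\hat{\pmb{c}}^k)\le\max_k f(\pmb{x},\hat{\pmb{c}}^k)$, which requires $f(\pmb{x},\cdot)$ to be \emph{convex} rather than concave. Before finalizing I would therefore verify the hypothesis of the type (ii) clause carefully, since the concave formulation appears to fail: for $f(\pmb{x},\pmb{c})=\sum_i x_i\sqrt{c_i}$, which is concave and increasing in $\pmb{c}$, the set $\cU'=\{(2,0),(0,2)\}$ is type (ii) with respect to $\cU=\{(2,0),(0,2),(1,1)\}$, yet at $\pmb{x}=(1,1)$ the discarded scenario $(1,1)$ strictly maximizes $f(\pmb{x},\cdot)$ over $\cU$, so $\cU'$ is not sufficient. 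Reading the type (ii) clause with ``convex'' in place of ``concave'' repairs this, and the Jensen step above then closes the argument.
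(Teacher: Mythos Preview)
Your treatment of types (i), (iii) and (iv) is identical to the paper's: fix $\pmb{x}$, pick a maximizer $\pmb{c}^*\in\cU$, produce a dominating $\pmb{c}'\in\cU'$ either by monotonicity (type (i)) or directly from the definition (types (iii), (iv)).

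For type (ii) you take a different route and, in doing so, uncover a genuine error in the stated theorem. The paper argues that the maximizer $\pmb{c}(\pmb{x})$ of $f(\pmb{x},\cdot)$ over $\cU$ can be assumed to be an extreme point of $\text{conv}(\cU)$, hence (by monotonicity) supported efficient, hence already in $\cU'$. But the extreme-point step requires $f(\pmb{x},\cdot)$ to be \emph{convex}, not concave: only for convex $f$ does $f(\pmb{x},\sum\lambda_i\pmb{c}^i)\le\max_i f(\pmb{x},\pmb{c}^i)$ hold, allowing one to pass to a vertex. Your Jensen-based argument via the convex-combination characterization of type (ii) needs exactly the same hypothesis, and your counterexample with $f(\pmb{x},\pmb{c})=x_1\sqrt{c_1}+x_2\sqrt{c_2}$, $\cU=\{(2,0),(0,2),(1,1)\}$, $\cU'=\{(2,0),(0,2)\}$, $\pmb{x}=(1,1)$ is valid and shows the statement with ``concave'' is false. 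So both approaches---the paper's extreme-point argument and your monotonicity-plus-Jensen argument---establish the corrected statement with ``convex'' in place of ``concave''; neither rescues the concave version, and your counterexample confirms it cannot be rescued.
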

\begin{proof}
Let some $\pmb{x}\in\X$ be given, and let $\pmb{c}(\pmb{x})$ be some maximizer of $\max_{\pmb{c}\in\cU} f(\pmb{x},\pmb{c})$.
\begin{itemize}
\item Let $\cU'$ fulfill sufficiency type (i). By definition, there is some $\pmb{c}'\in\cU'$ such that $\pmb{c}' \ge \pmb{c}(\pmb{x})$. By monotonicity of $f$, we have
\[ \max_{\pmb{c}\in\cU'} f(\pmb{x},\pmb{c}) \ge f(\pmb{x},\pmb{c}') \ge f(\pmb{x},\pmb{c}(\pmb{x})) = \max_{\pmb{c}\in\cU} f(\pmb{x},\pmb{c}). \]
\item Let $\cU'$ fulfill sufficiency type (ii). As $f$ is concave in $\pmb{c}$, we can assume that $\pmb{c}(\pmb{x})$ is an extreme point of the polyhedron $\text{conv}(\cU)$. Due to the monotonicity, $\pmb{c}(\pmb{x})$ is also efficient. The efficient extreme points of $\text{conv}(\cU)$ are exactly the supported efficient points of $\cU$. As $\cU'$ contains all supported efficient solutions, we therefore have $\pmb{c}(\pmb{x})\in\cU'$.

\item The proofs for generalized sufficiency of types (iii) and (iv) work in the same way as for the proofs of Theorem~\ref{th:sufficient}.
\end{itemize}
\end{proof}

\section{Additional Experimental Results}\label{sec:expappendix}

In this section we collect the remaining approximation experiments for the selection problem. In Figure~\ref{appfig:sel-performance-small}, performance (guarantee and actual ratio) of small instances with $n=8$ and $n=15$ are presented and Figure~\ref{appfig:sel-time-performance-small} shows their respective time performance. Moreover, the performance and time evaluation of the remaining large-scale experiments with $n\in\{20,30,100\}$ for the selection problem are provided in Figure~\ref{appfig:sel-performance-larg} and Figure~\ref{appfig:sel-time-performance-large}, respectively.

\begin{figure}
	\centering
		\begin{subfigure}{0.45\textwidth}
         		\centering
         			\includegraphics[width=\textwidth]{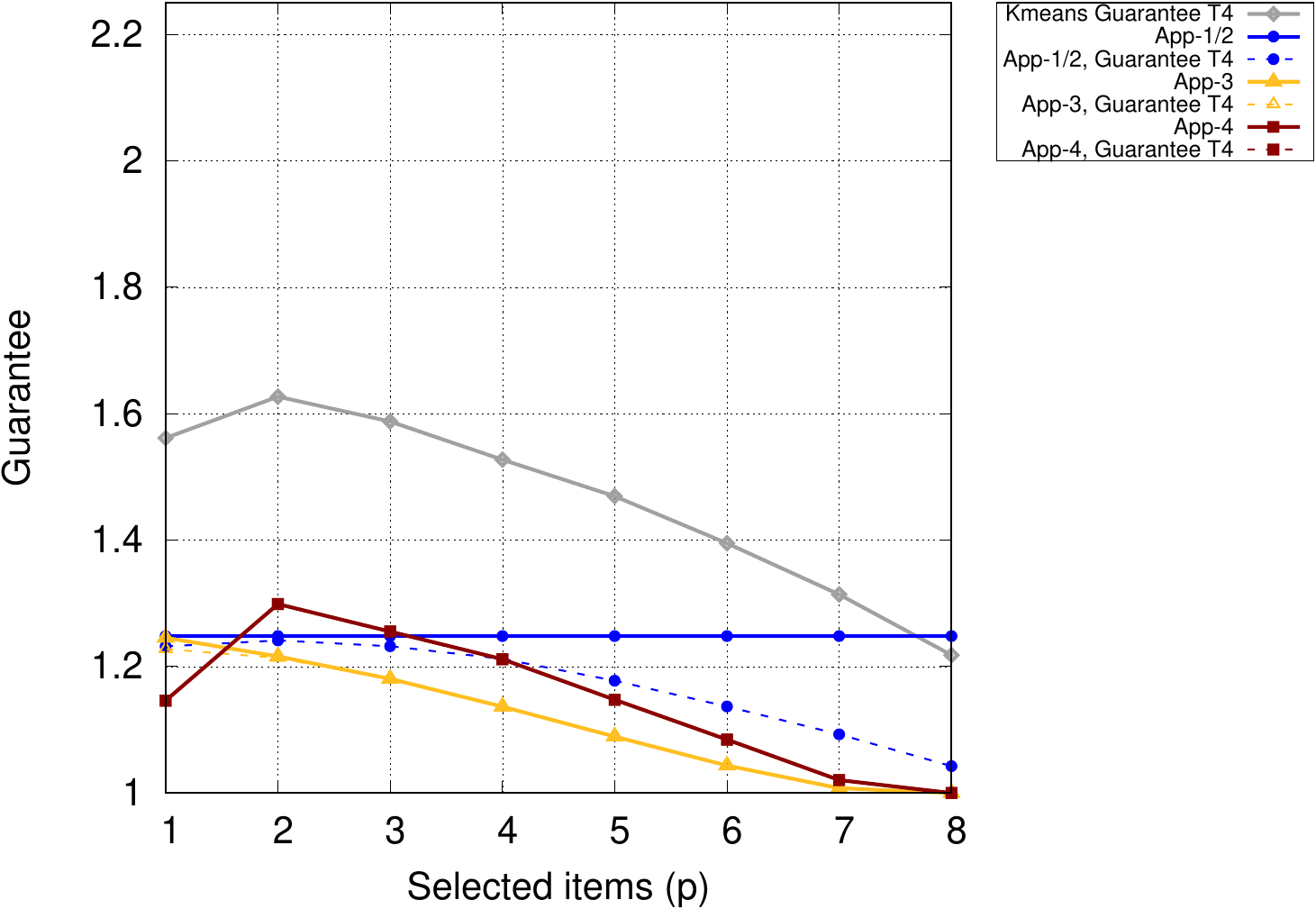}
         		\caption{Guarantee ($n=8$)}\label{fig:guarantee-n8}
		\end{subfigure}
     		\begin{subfigure}{0.45\textwidth}
         		\centering
         			\includegraphics[width=\textwidth]{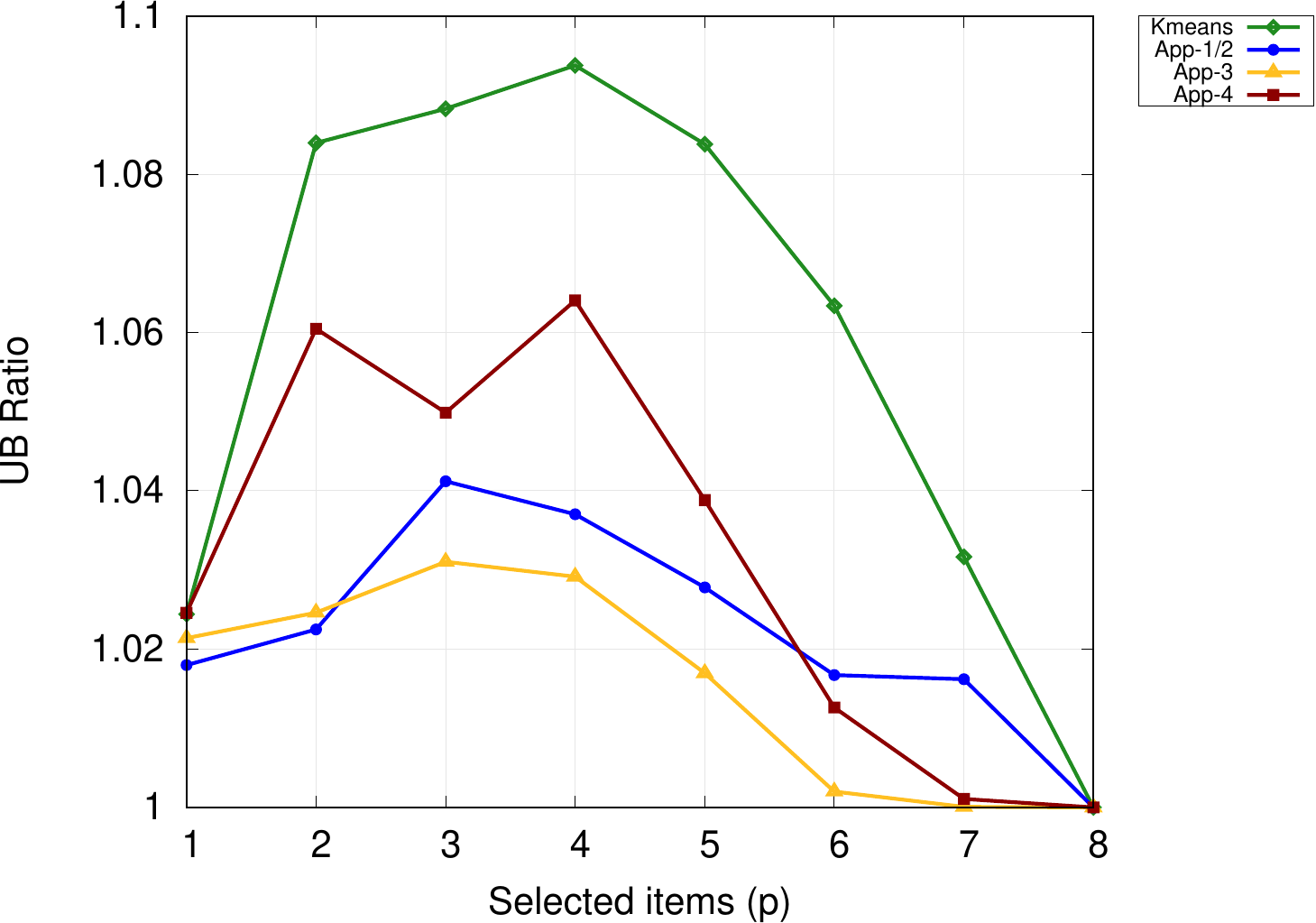}
         		\caption{UB Ratio ($n=8$)}\label{fig:optgap-n8}
     		\end{subfigure}\\
		\begin{subfigure}{0.45\textwidth}
         		\centering
         			\includegraphics[width=\textwidth]{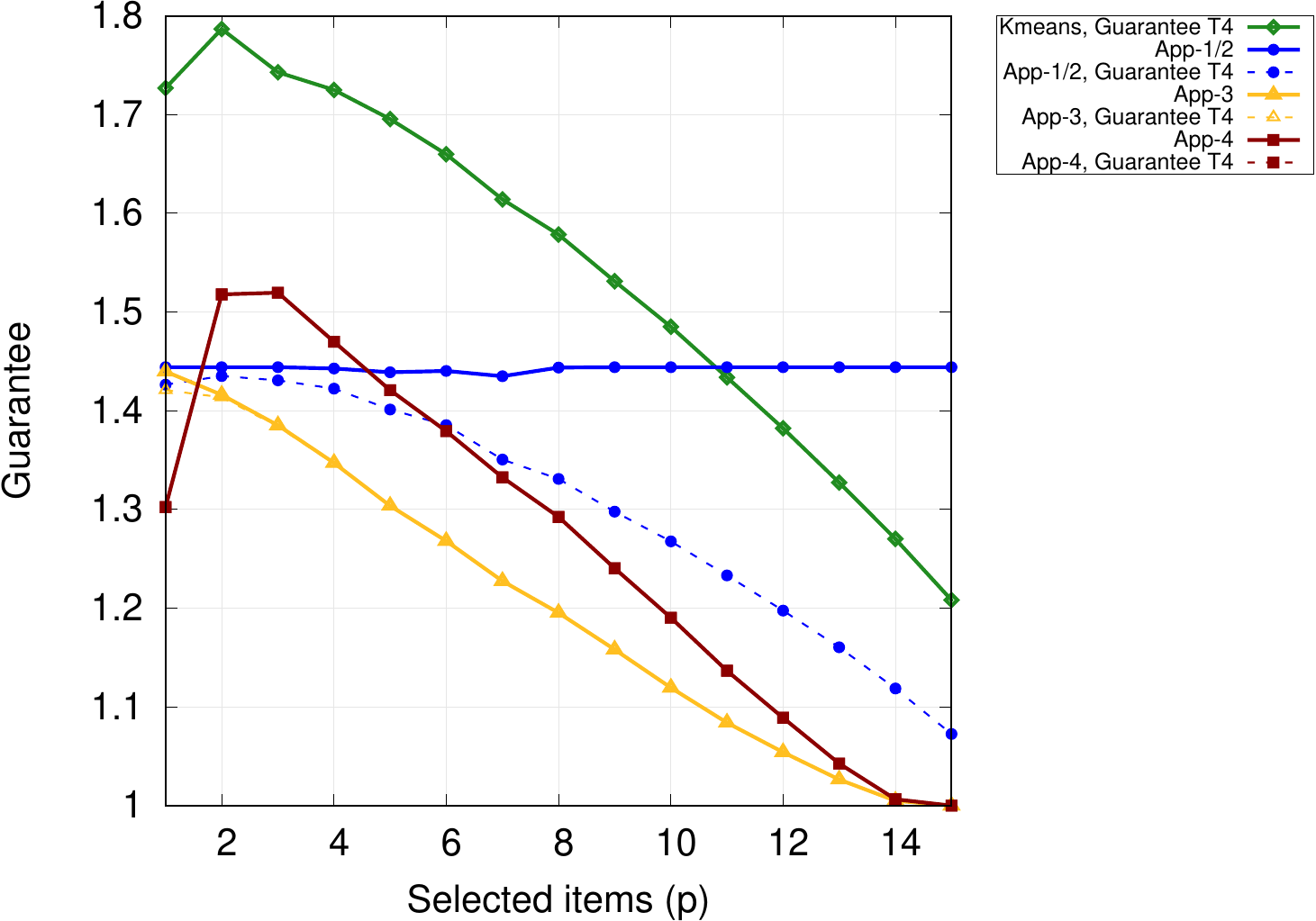}
         		\caption{Guarantee ($n=15$)}\label{fig:guarantee-n15}
		\end{subfigure}
     		\begin{subfigure}{0.45\textwidth}
         		\centering
         			\includegraphics[width=\textwidth]{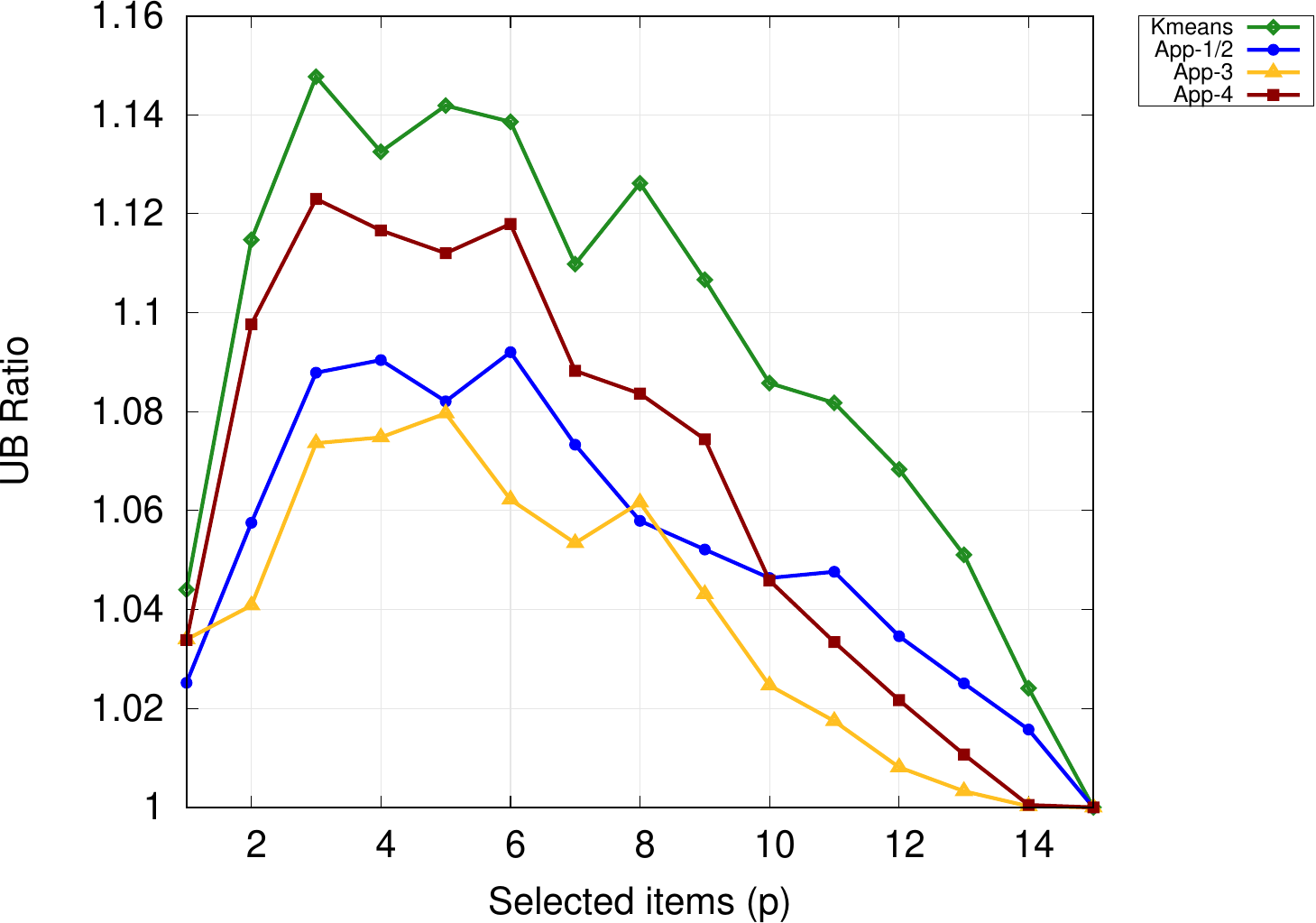}
         		\caption{UB Ratio ($n=15$)}\label{fig:optgap-n15}
     		\end{subfigure}
        		\caption{Selection - performance of small instances}\label{appfig:sel-performance-small}
\end{figure}

\begin{figure}
	\centering
		\begin{subfigure}{0.32\textwidth}
         		\centering
         			\includegraphics[width=\textwidth]{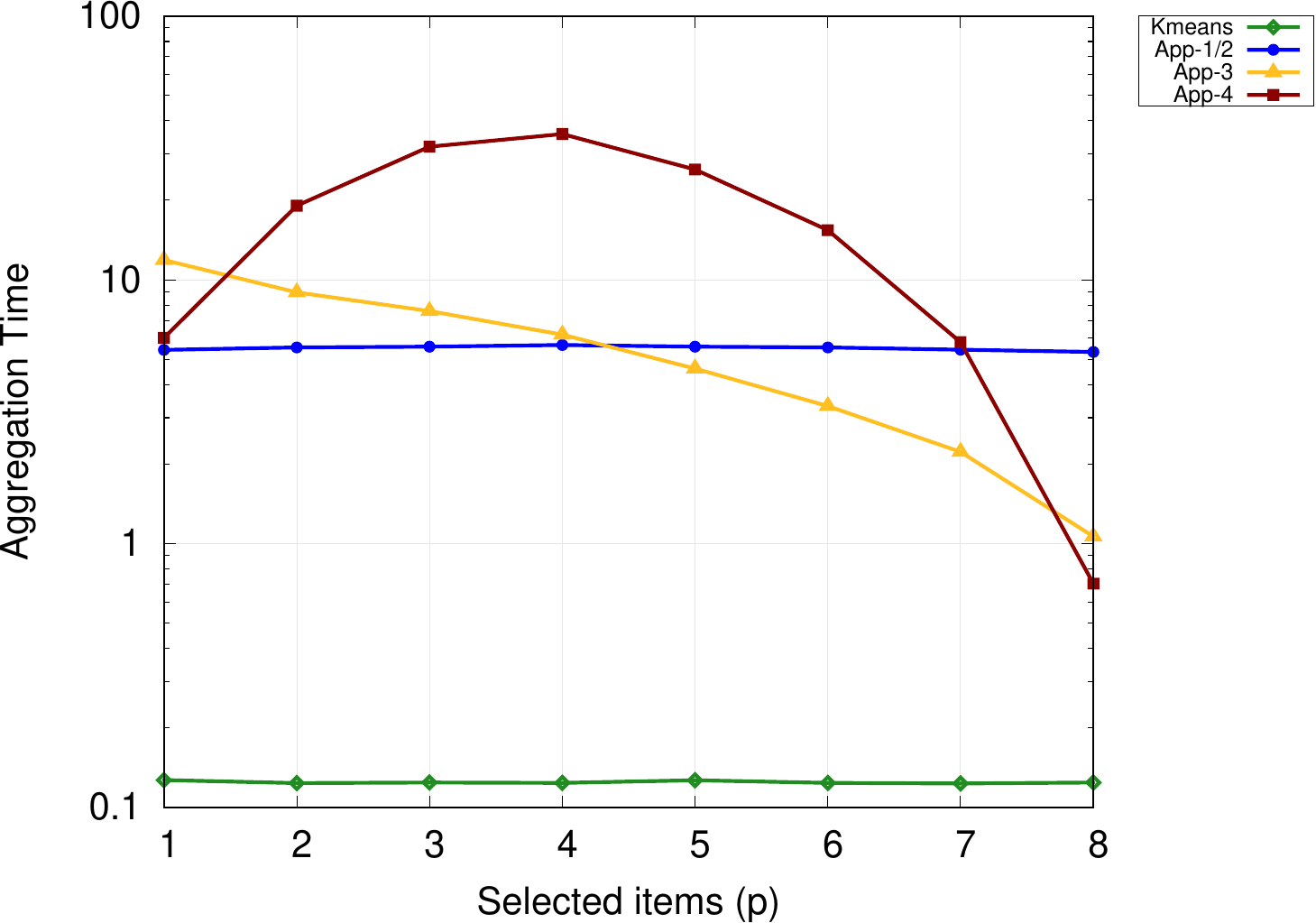}
         		\caption{Aggregation}\label{fig:aggtime,n8}
     		\end{subfigure}
     		\begin{subfigure}{0.32\textwidth}
         		\centering
         			\includegraphics[width=\textwidth]{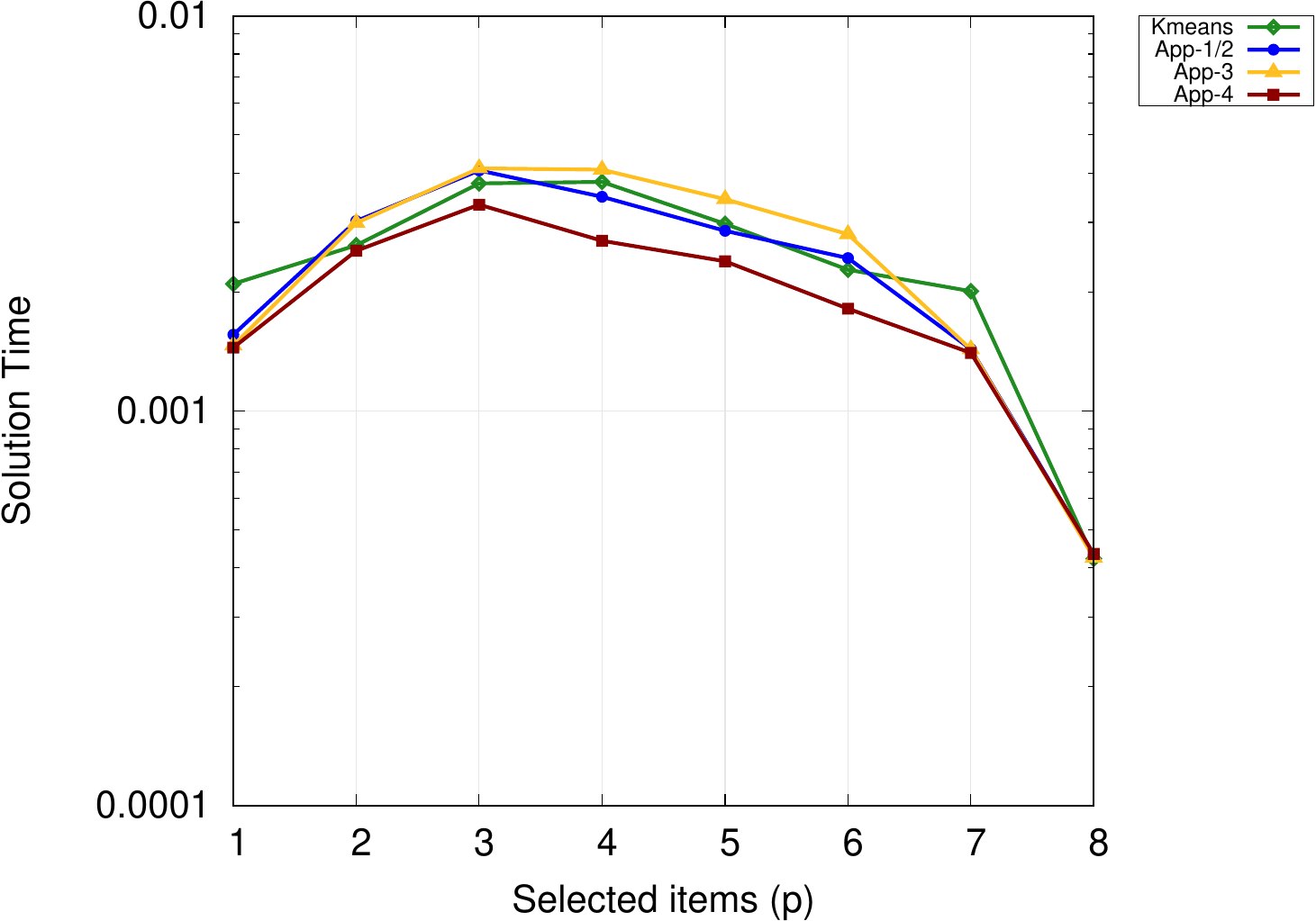}
         		\caption{Solution}\label{fig:soltime,n8}
     		\end{subfigure}
		\begin{subfigure}{0.32\textwidth}
         		\centering
         			\includegraphics[width=\textwidth]{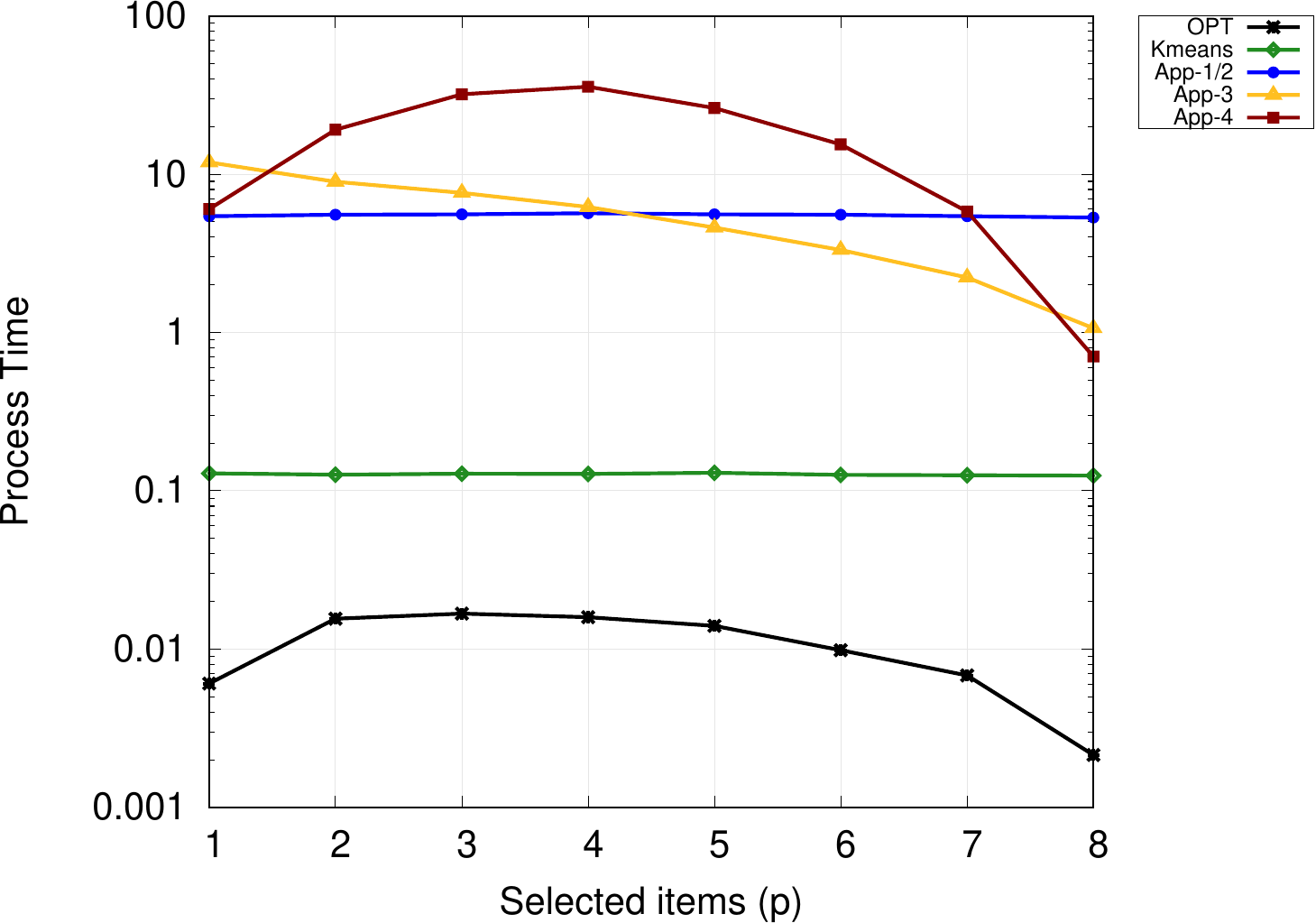}
         		\caption{Process}\label{fig:protime,n8}
     		\end{subfigure}\\
		\begin{subfigure}{0.32\textwidth}
         		\centering
         			\includegraphics[width=\textwidth]{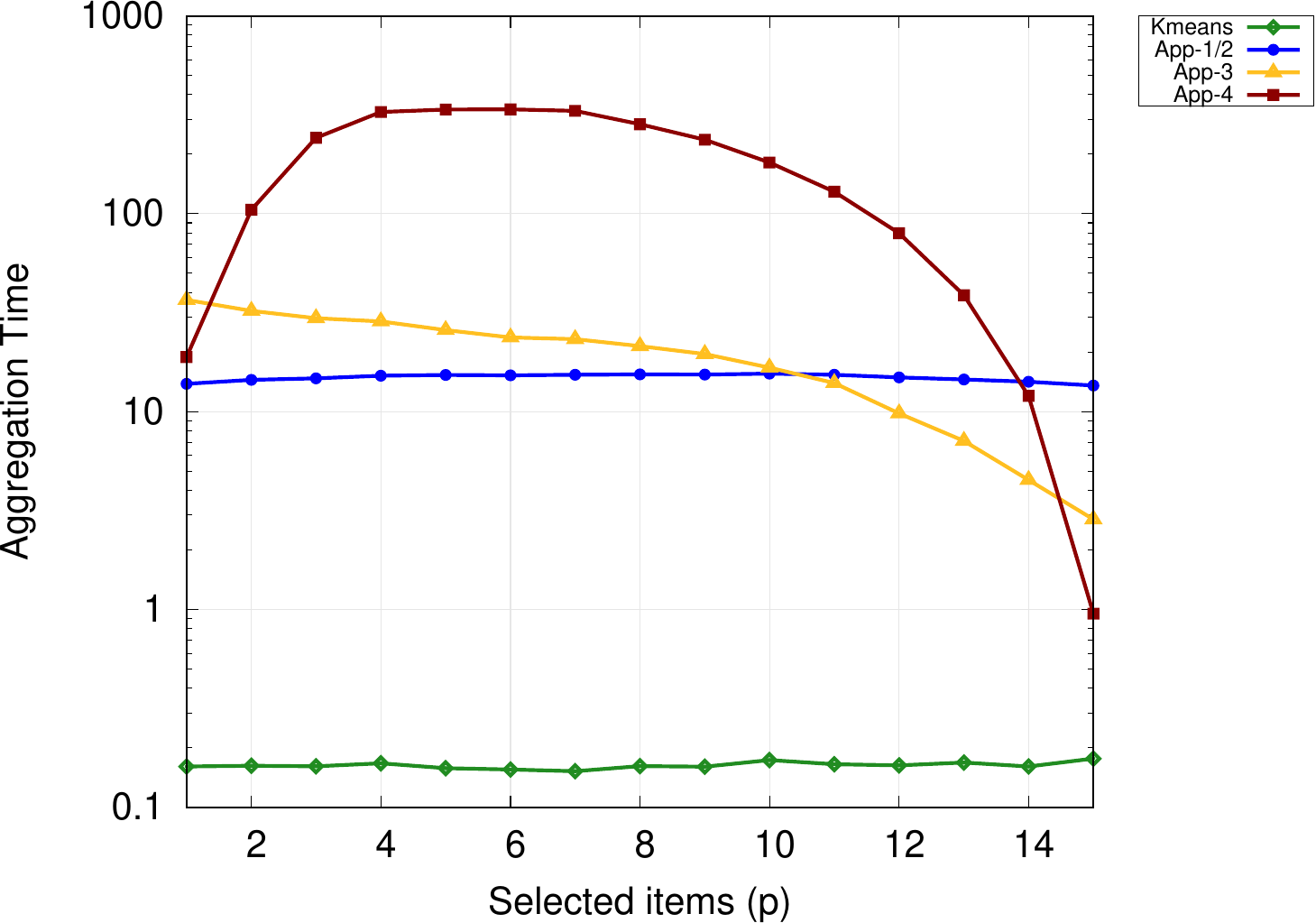}
         		\caption{Aggregation Time}\label{fig:aggtime-n15}
     		\end{subfigure}
     		\begin{subfigure}{0.32\textwidth}
         		\centering
         			\includegraphics[width=\textwidth]{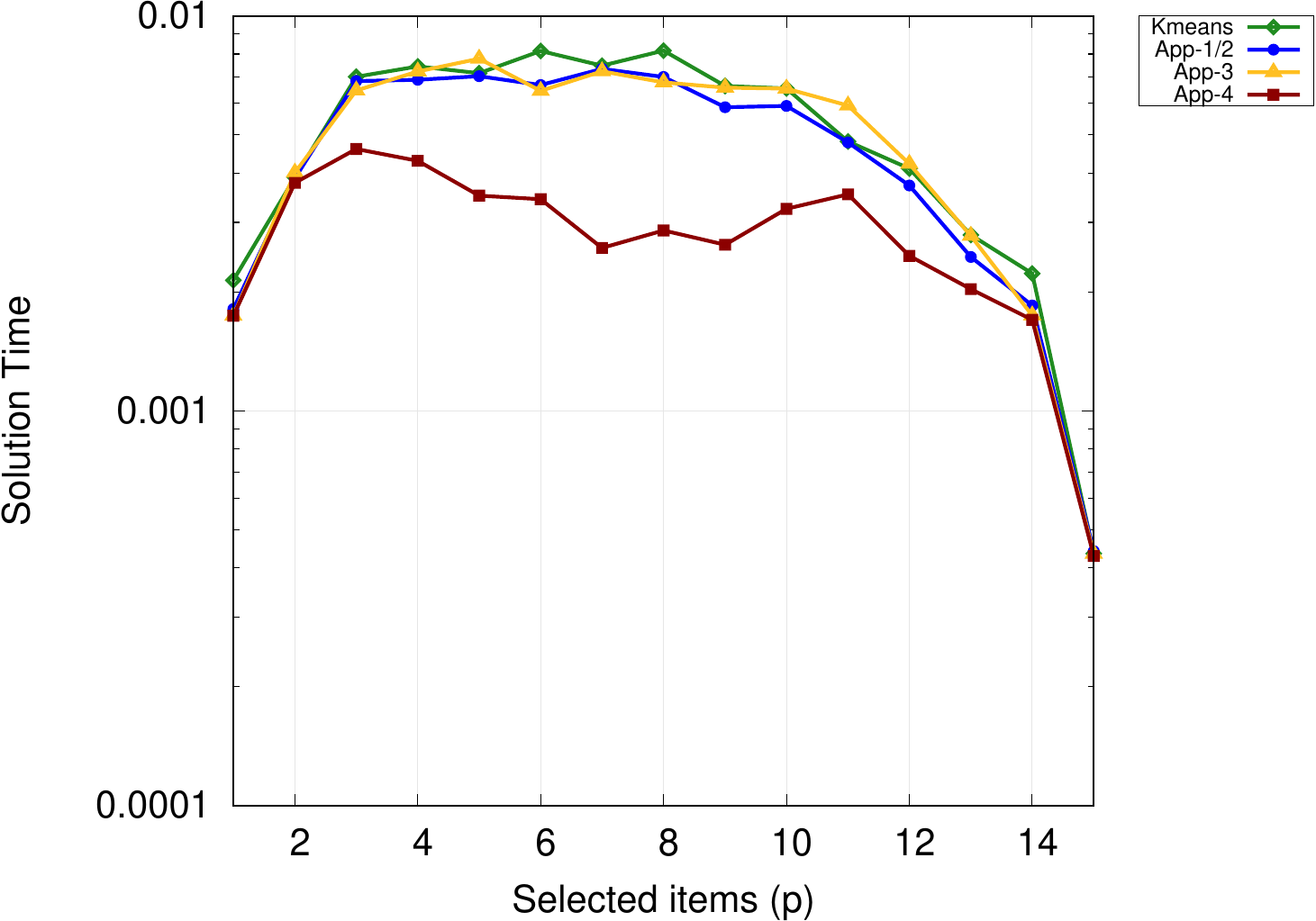}
         		\caption{Solution Time}\label{fig:soltime-n15}
     		\end{subfigure}
		\begin{subfigure}{0.32\textwidth}
         		\centering
         			\includegraphics[width=\textwidth]{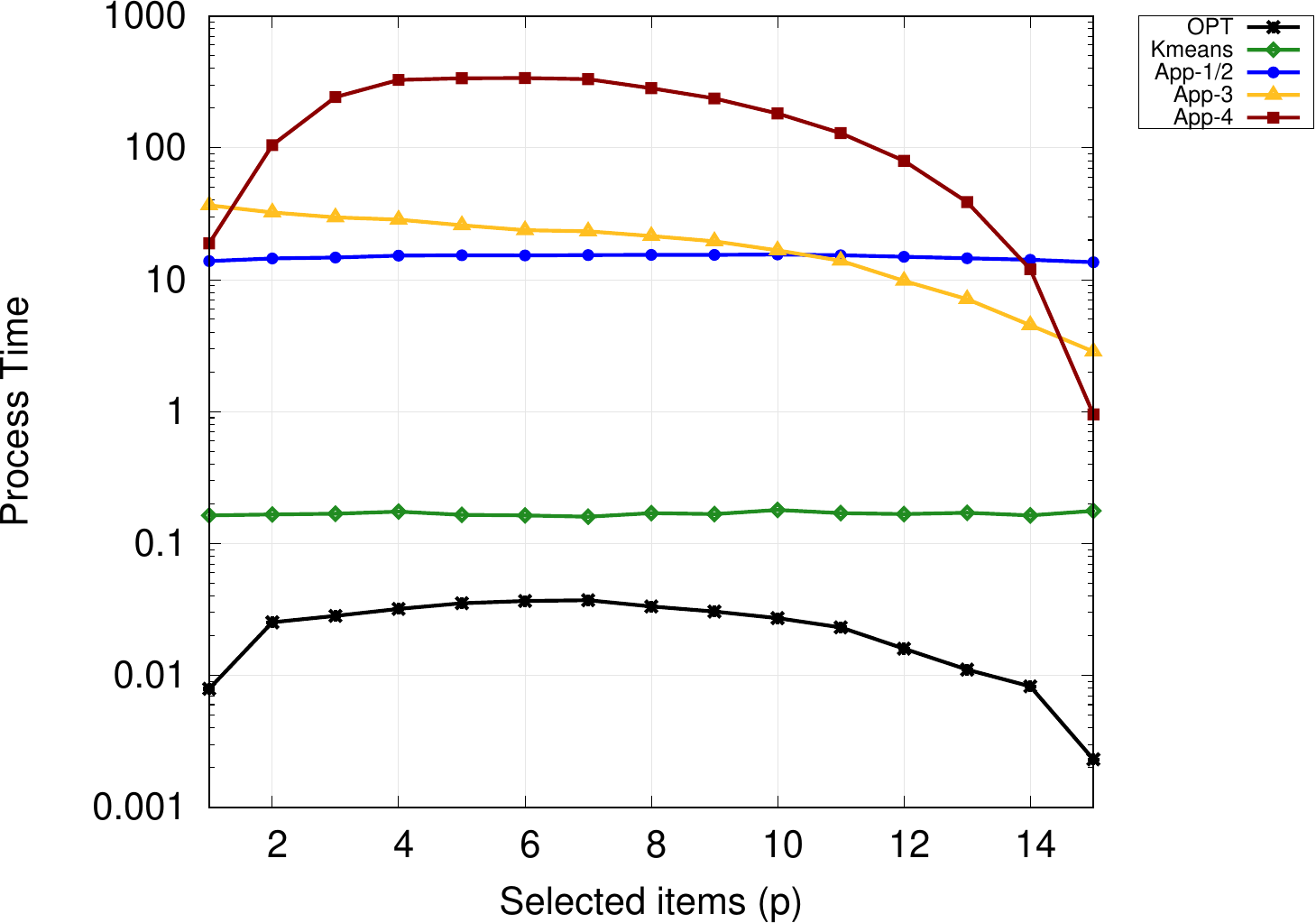}
         		\caption{Process}\label{fig:protime,n15}
     		\end{subfigure}
        		\caption{Selection - time performance of small instances}\label{appfig:sel-time-performance-small}
\end{figure}

\begin{figure}
	\centering
		\begin{subfigure}{0.45\textwidth}
         		\centering
         			\includegraphics[width=\textwidth]{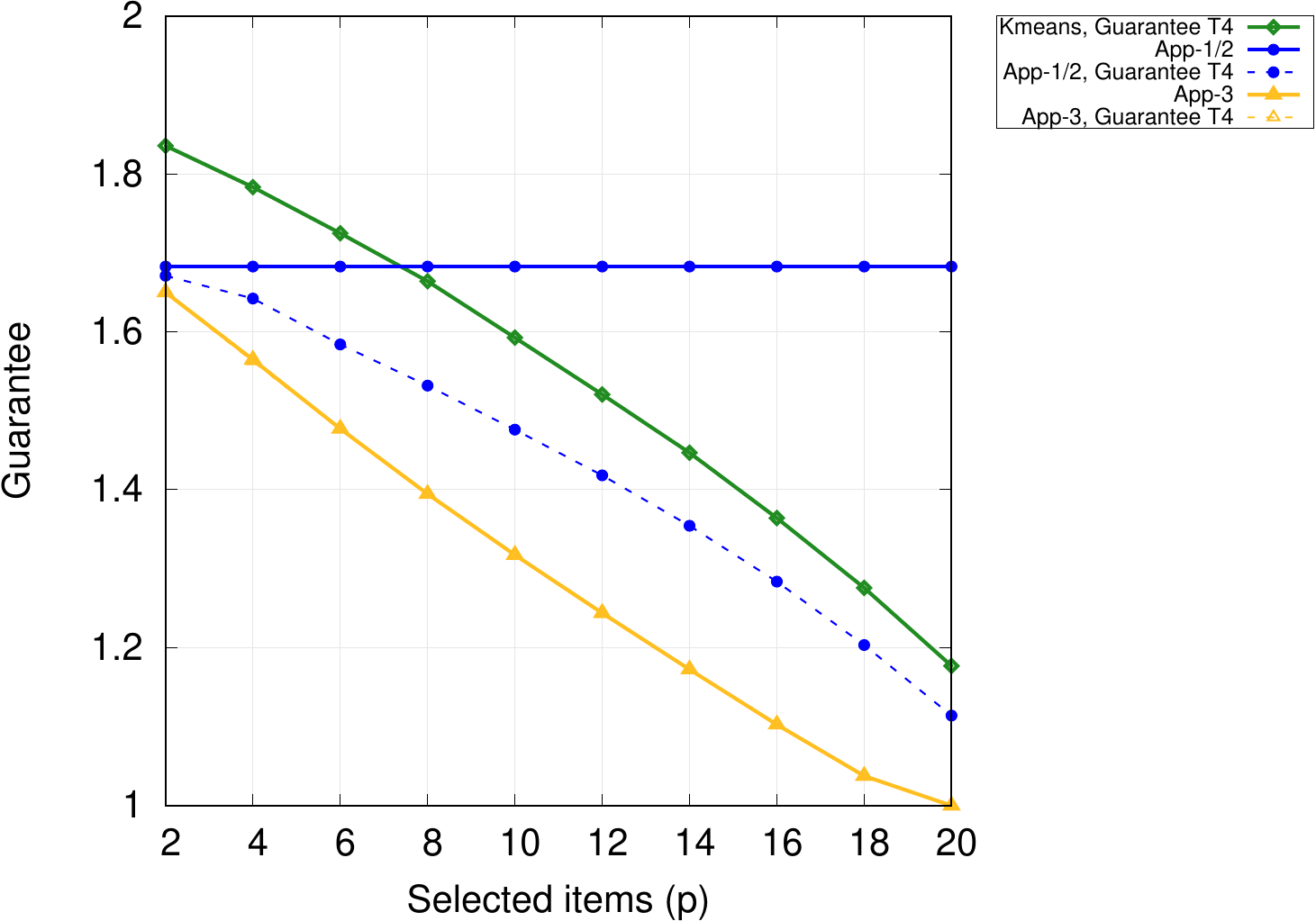}
         		\caption{Guarantee ($n=20$)}\label{fig:guarantee-t4-n20}
		\end{subfigure}
     		\begin{subfigure}{0.45\textwidth}
         		\centering
         			\includegraphics[width=\textwidth]{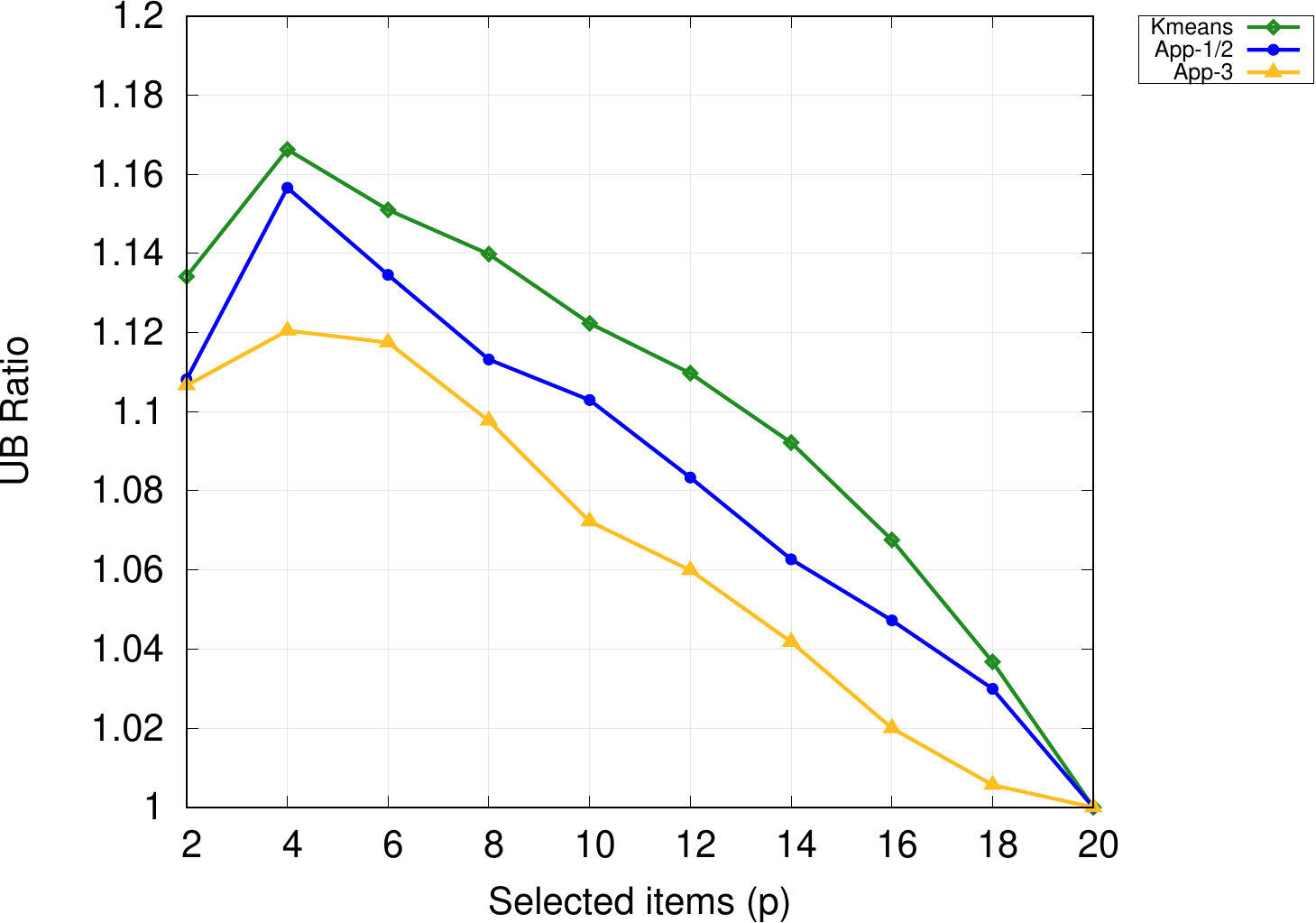}
         		\caption{UB Ratio ($n=20$)}\label{fig:ubratio-n20}
     		\end{subfigure}\\
		\begin{subfigure}{0.45\textwidth}
         		\centering
         			\includegraphics[width=\textwidth]{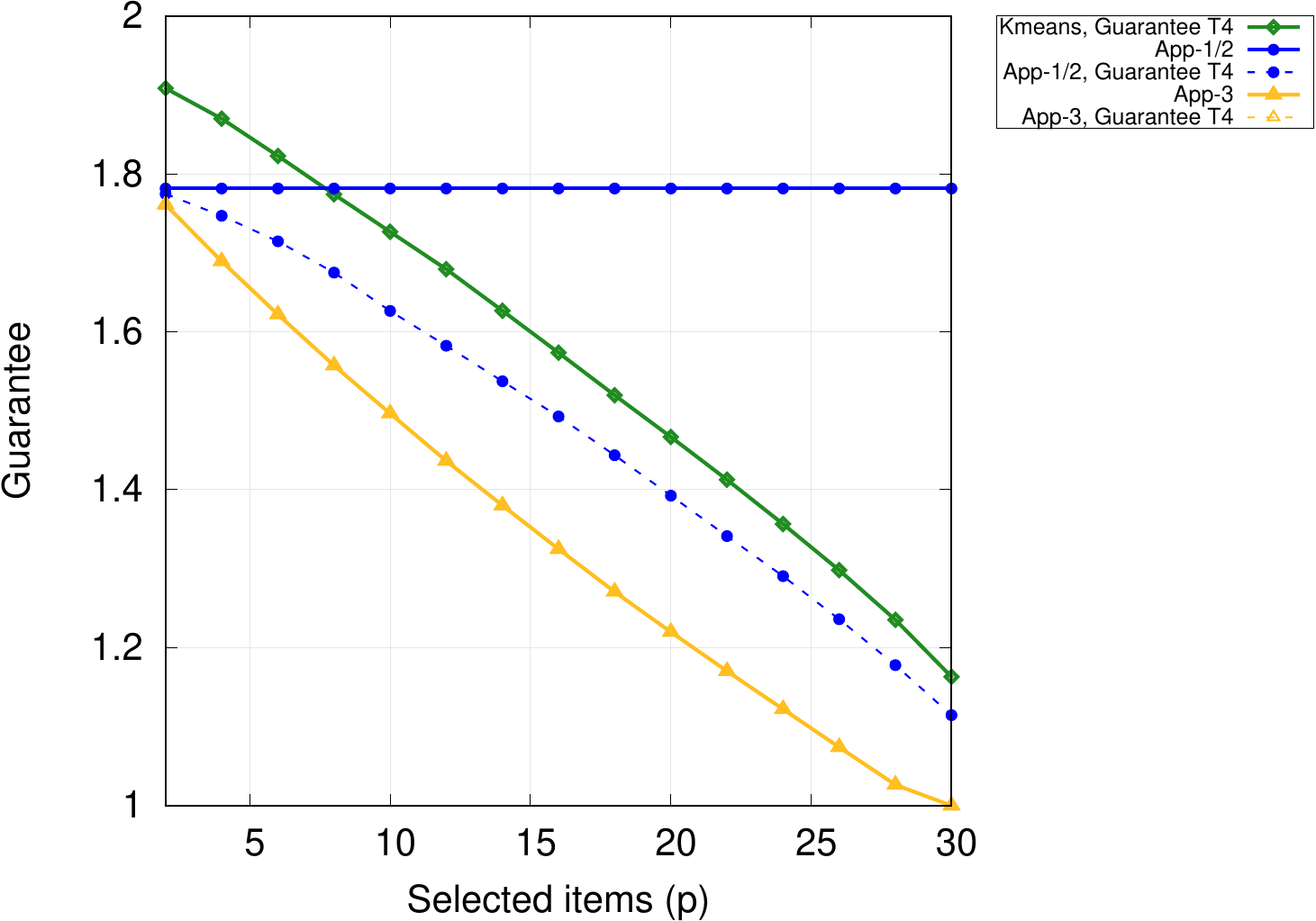}
         		\caption{Guarantee ($n=30$)}\label{fig:guarantee-t4-30}
		\end{subfigure}
     		\begin{subfigure}{0.45\textwidth}
         		\centering
         			\includegraphics[width=\textwidth]{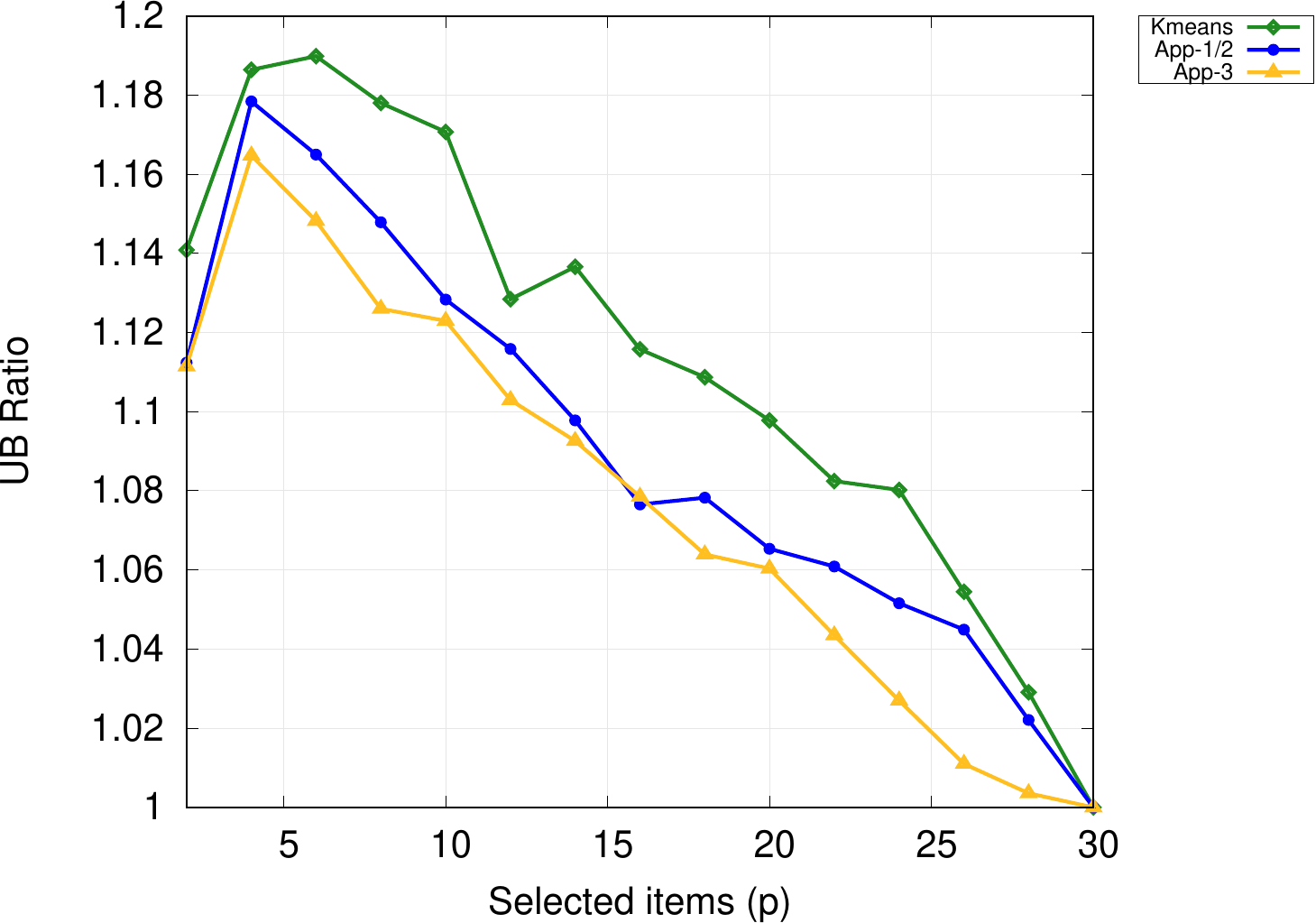}
         		\caption{UB Ratio ($n=30$)}\label{fig:ubratio-30}
     		\end{subfigure}\\
		\begin{subfigure}{0.45\textwidth}
         		\centering
         			\includegraphics[width=\textwidth]{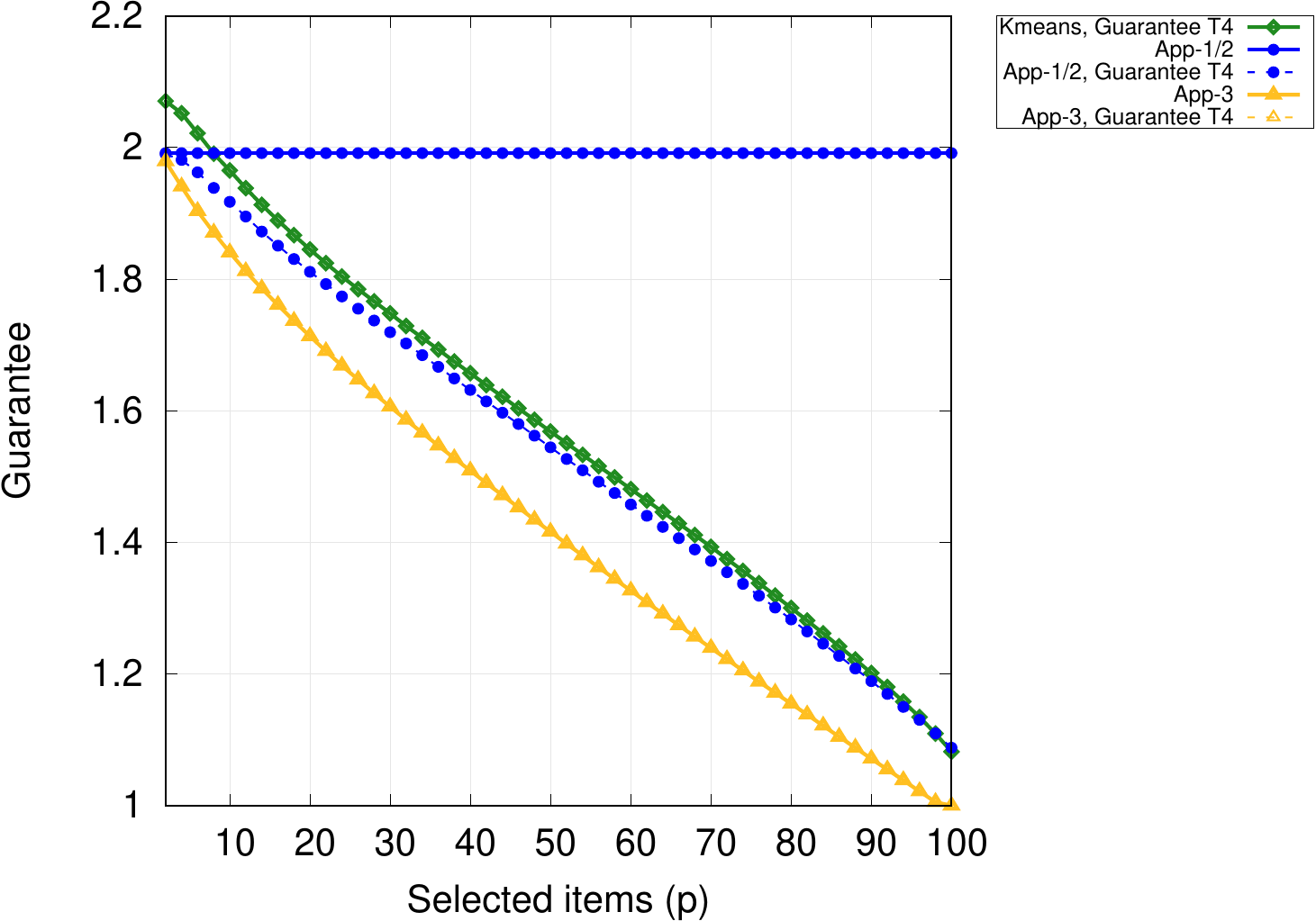}
         		\caption{Guarantee ($n=100$)}\label{fig:guarantee-t4-100}
		\end{subfigure}
     		\begin{subfigure}{0.45\textwidth}
         		\centering
         			\includegraphics[width=\textwidth]{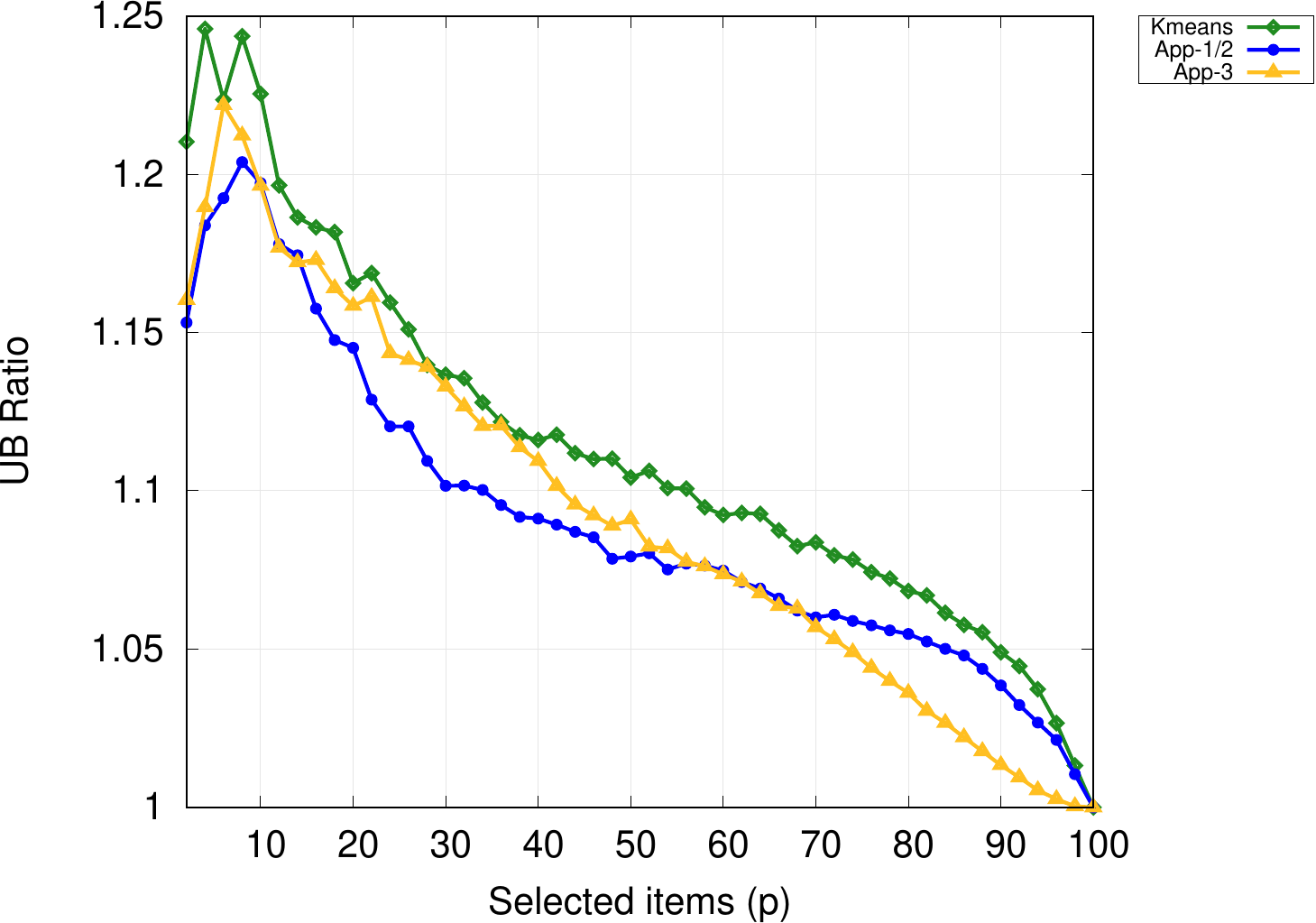}
         		\caption{UB Ratio ($n=100$)}\label{fig:ubratio-100}
     		\end{subfigure}
        		\caption{Selection - performance of large instances}\label{appfig:sel-performance-larg}
\end{figure}

\begin{figure}
	\centering
		\begin{subfigure}{0.32\textwidth}
         		\centering
         			\includegraphics[width=\textwidth]{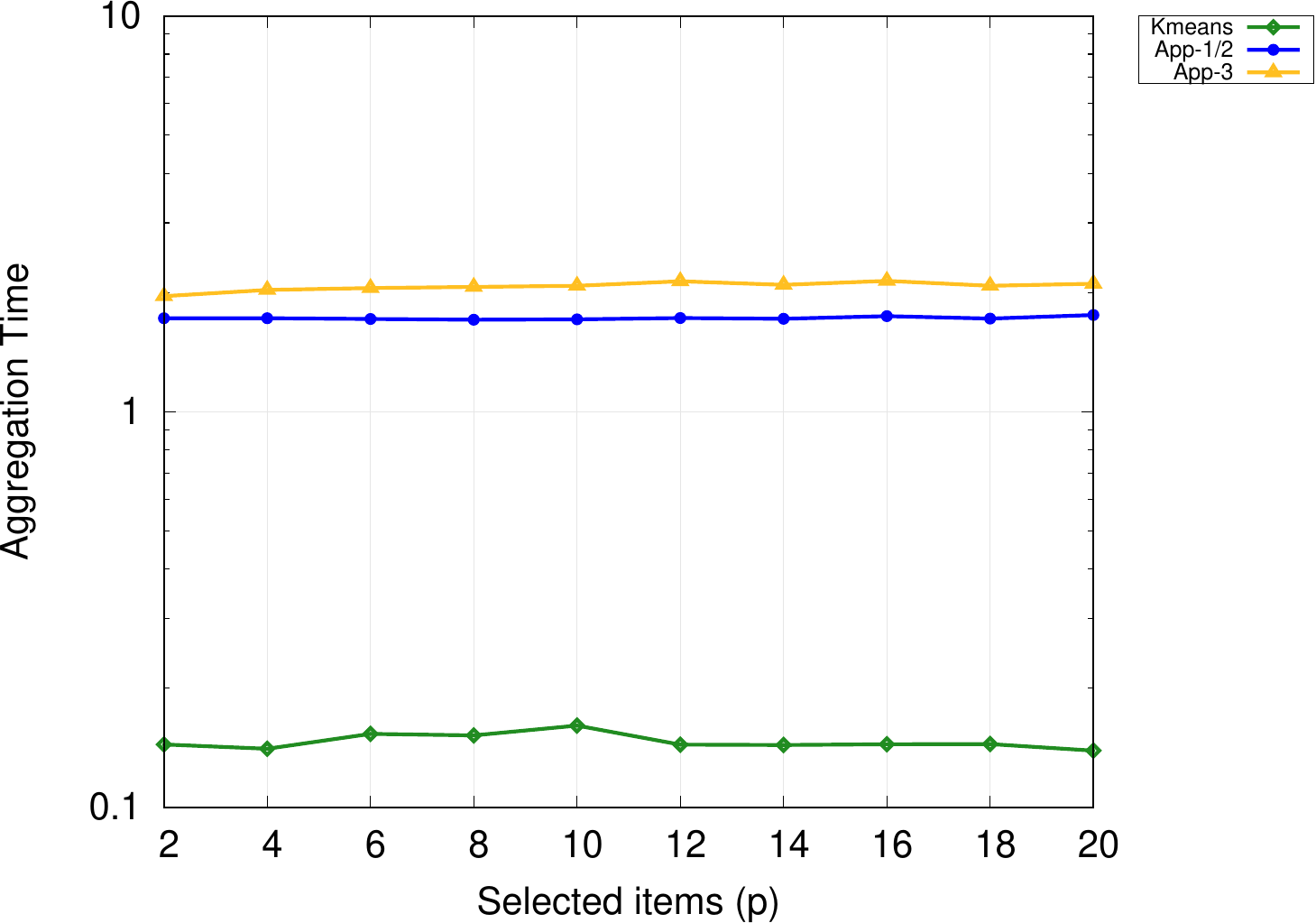}
         		\caption{Aggregation ($n=20$)}\label{fig:aggtime,n20}
     		\end{subfigure}
     		\begin{subfigure}{0.32\textwidth}
         		\centering
         			\includegraphics[width=\textwidth]{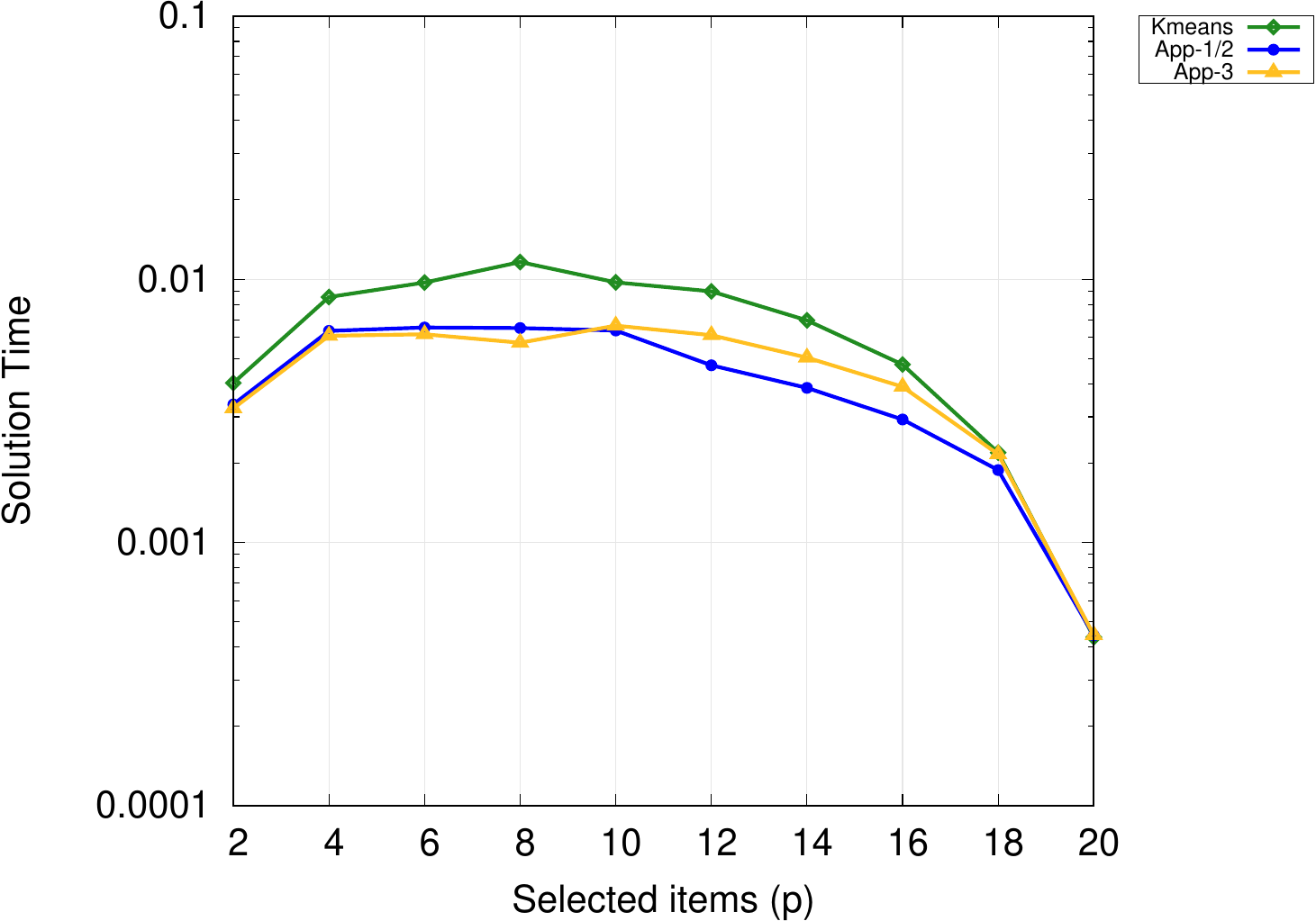}
         		\caption{Solution ($n=20$)}\label{fig:soltime,n20}
     		\end{subfigure}
     		\begin{subfigure}{0.32\textwidth}
         		\centering
         			\includegraphics[width=\textwidth]{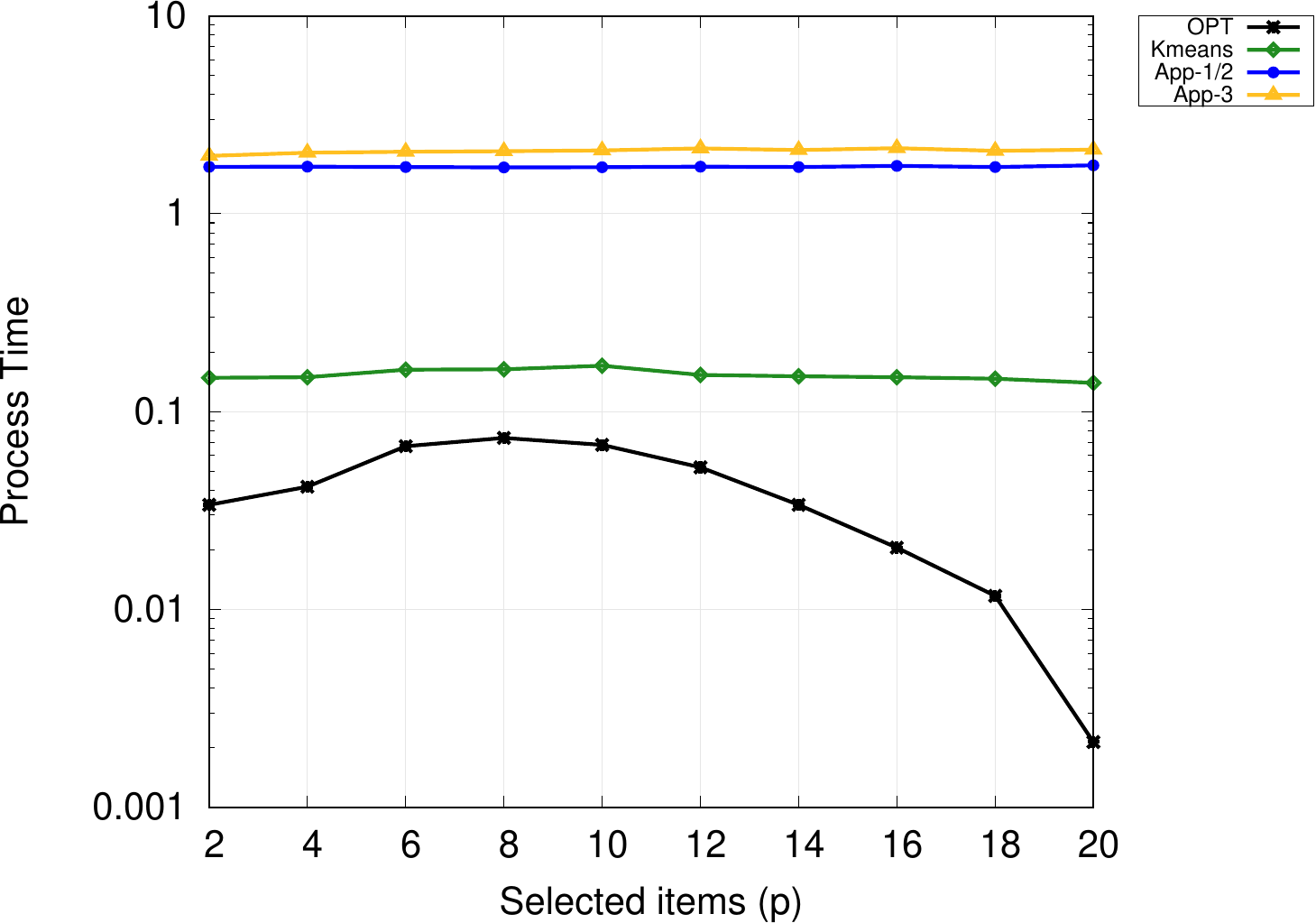}
         		\caption{Process ($n=20$)}\label{fig:protime,n20}
     		\end{subfigure} \\
		\begin{subfigure}{0.32\textwidth}
         		\centering
         			\includegraphics[width=\textwidth]{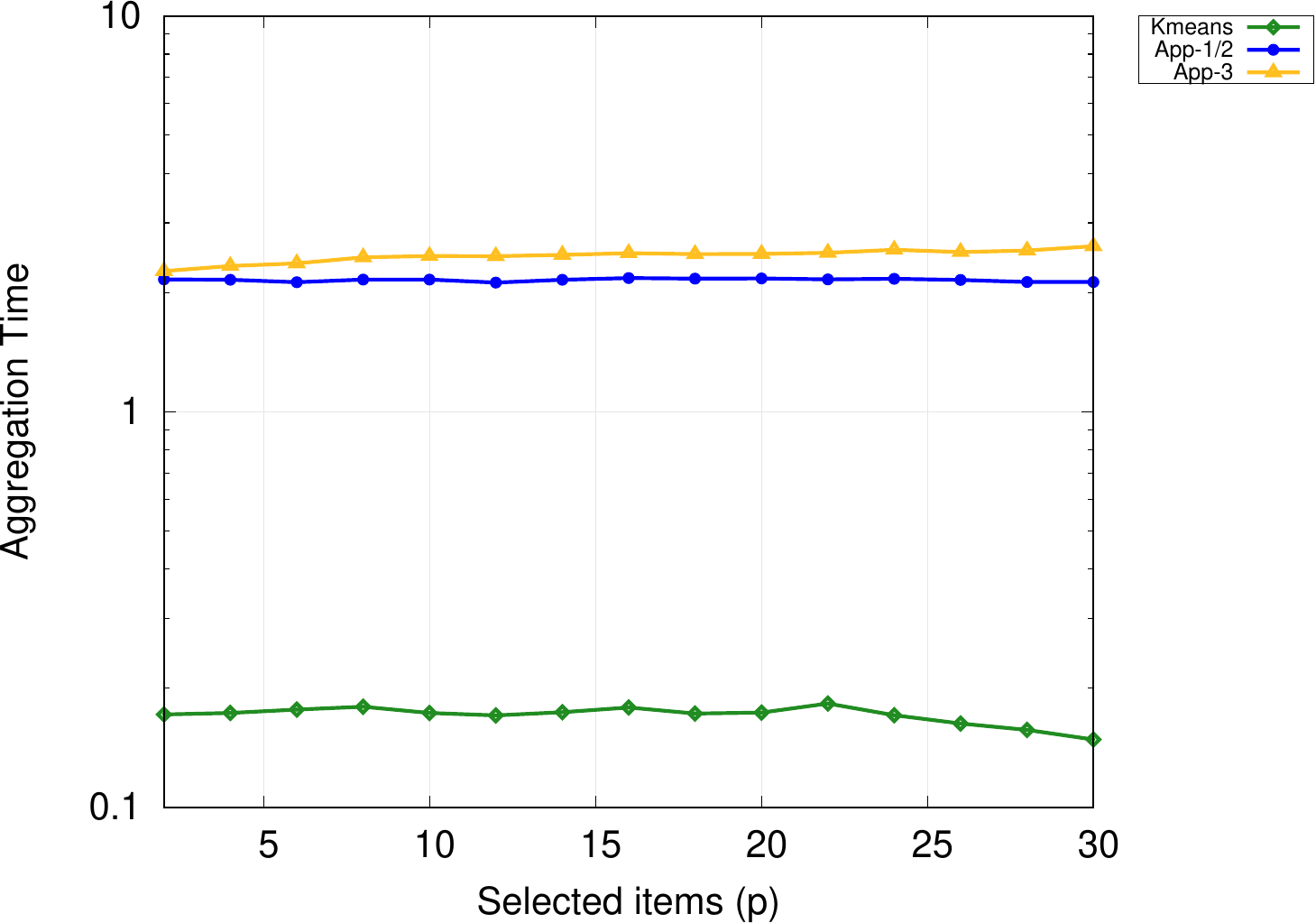}
         		\caption{Aggregation ($n=30$)}\label{fig:aggtime-n30}
     		\end{subfigure}
     		\begin{subfigure}{0.32\textwidth}
         		\centering
         			\includegraphics[width=\textwidth]{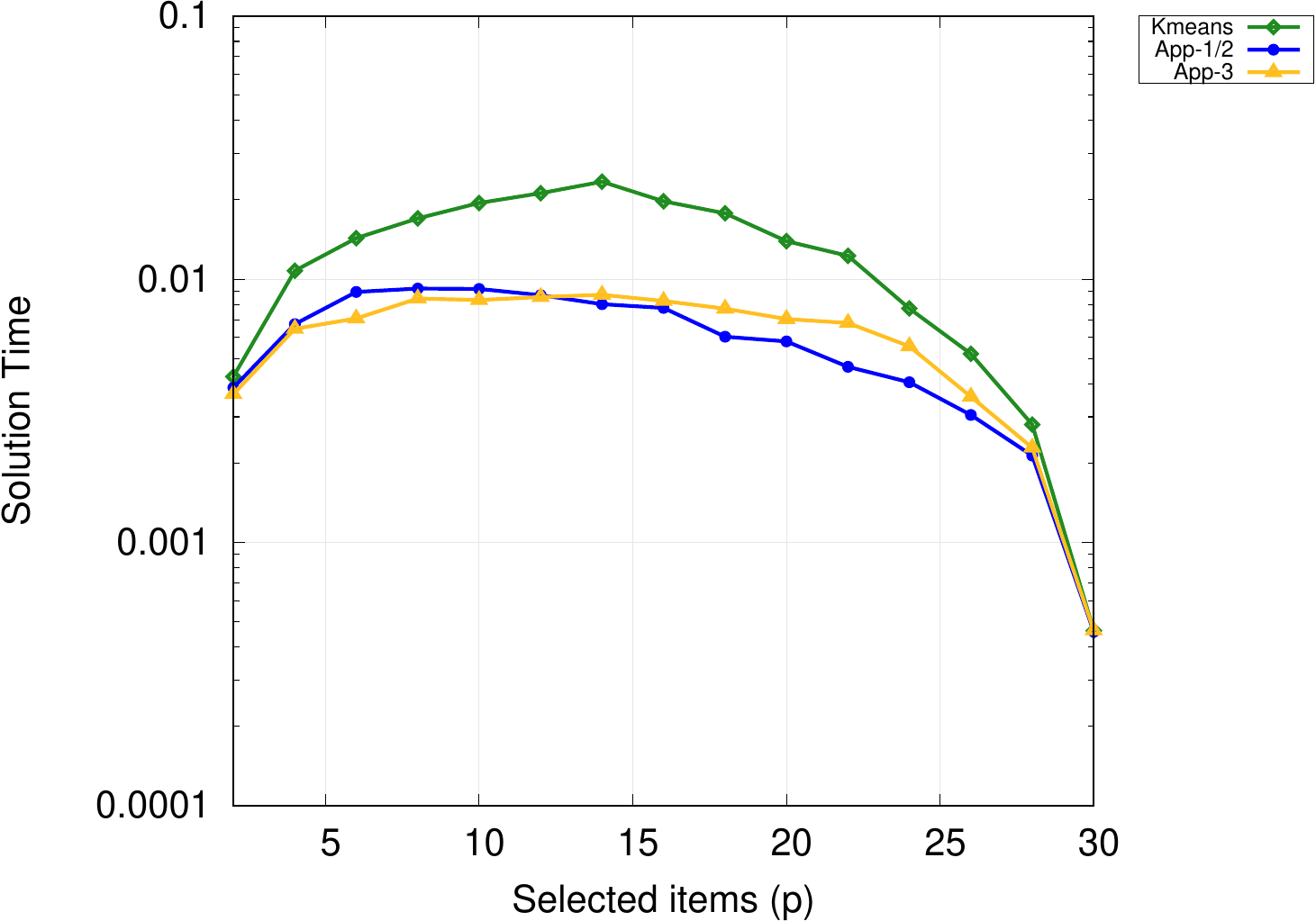}
         		\caption{Solution ($n=30$)}\label{fig:soltime-n30}
     		\end{subfigure}
     		\begin{subfigure}{0.32\textwidth}
         		\centering
         			\includegraphics[width=\textwidth]{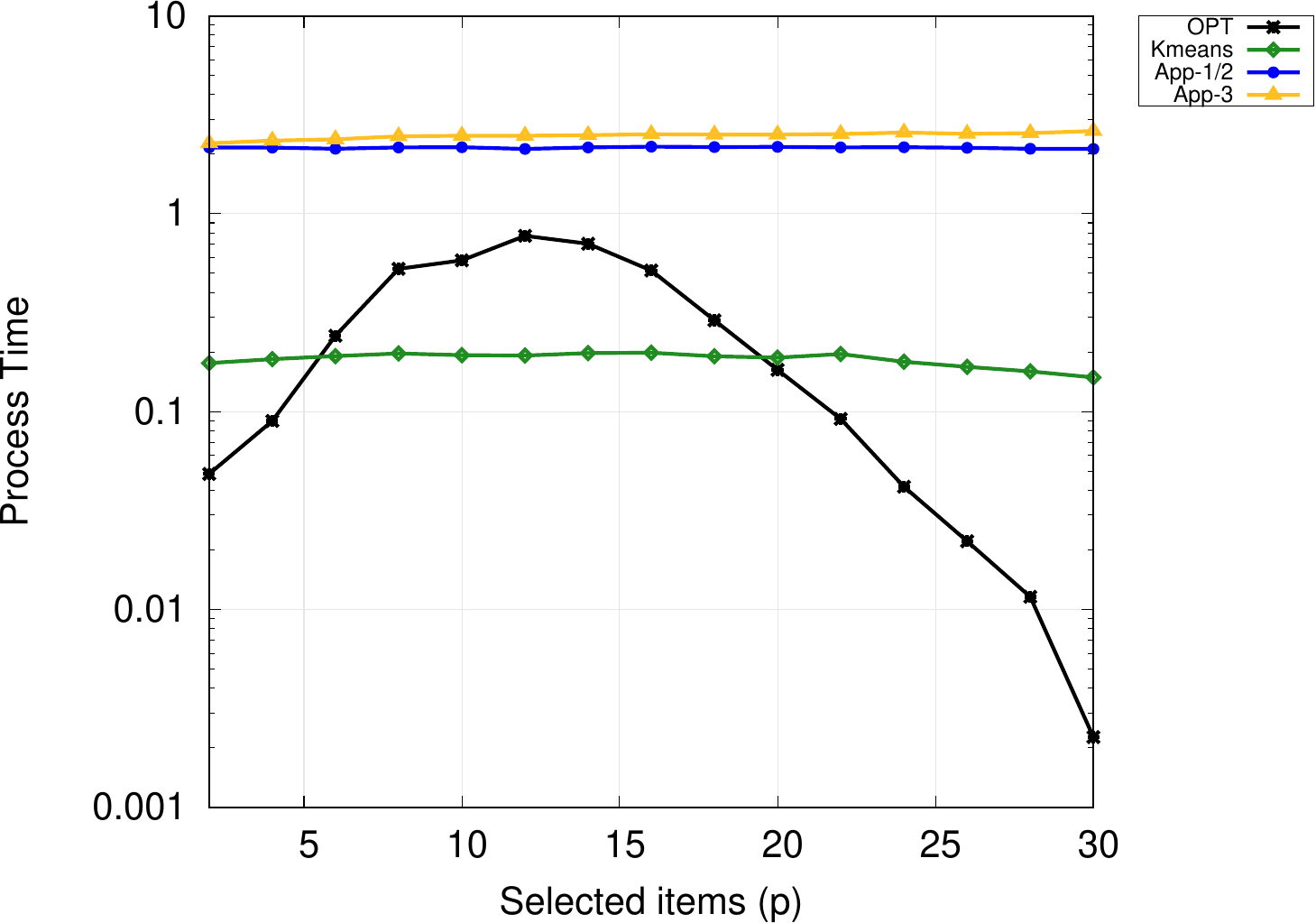}
         		\caption{Process ($n=30$)}\label{fig:protime,n30}
     		\end{subfigure} \\
		\begin{subfigure}{0.32\textwidth}
         		\centering
         			\includegraphics[width=\textwidth]{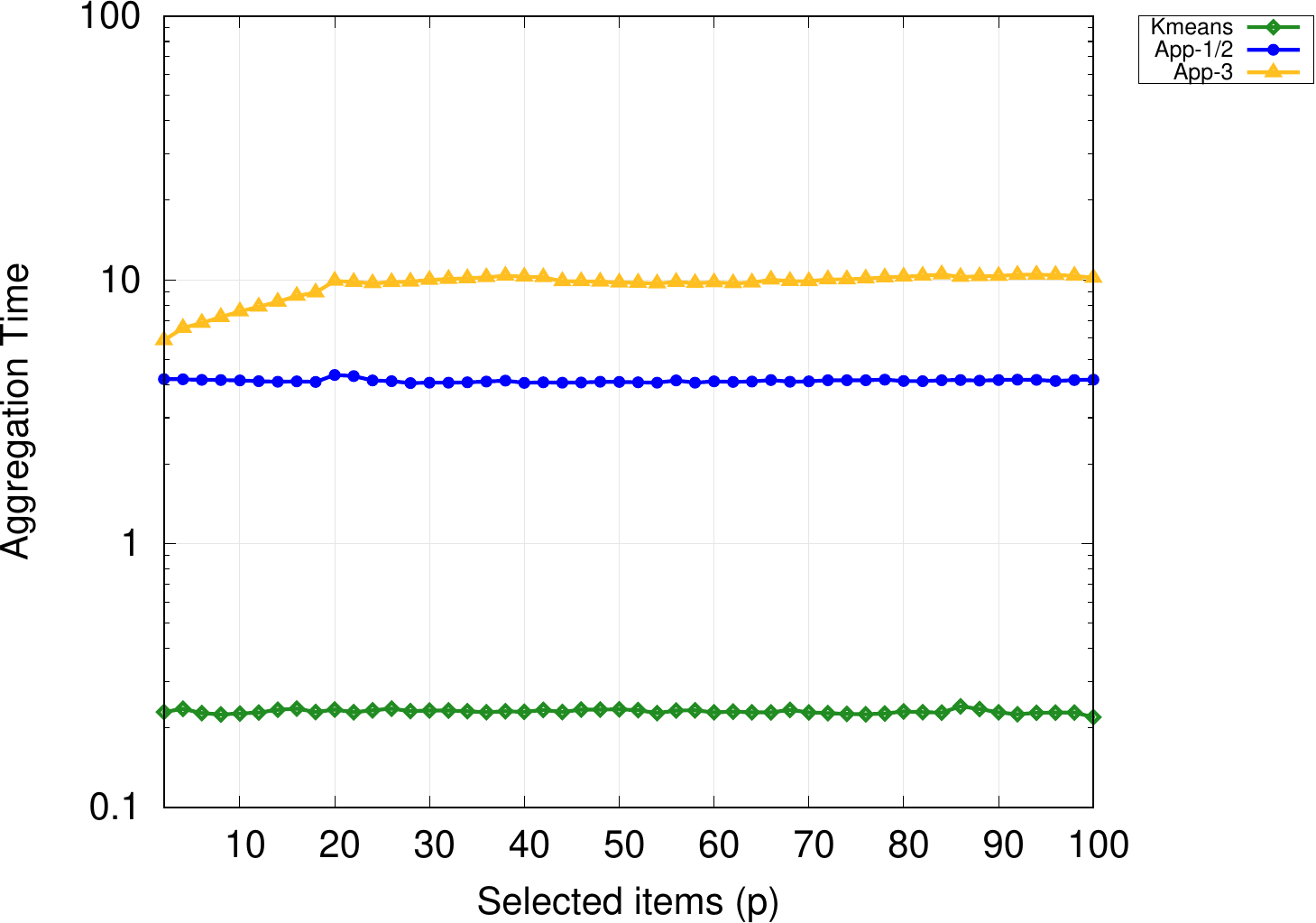}
         		\caption{Aggregation ($n=100$)}\label{fig:aggtime-n100}
     		\end{subfigure}
     		\begin{subfigure}{0.32\textwidth}
         		\centering
         			\includegraphics[width=\textwidth]{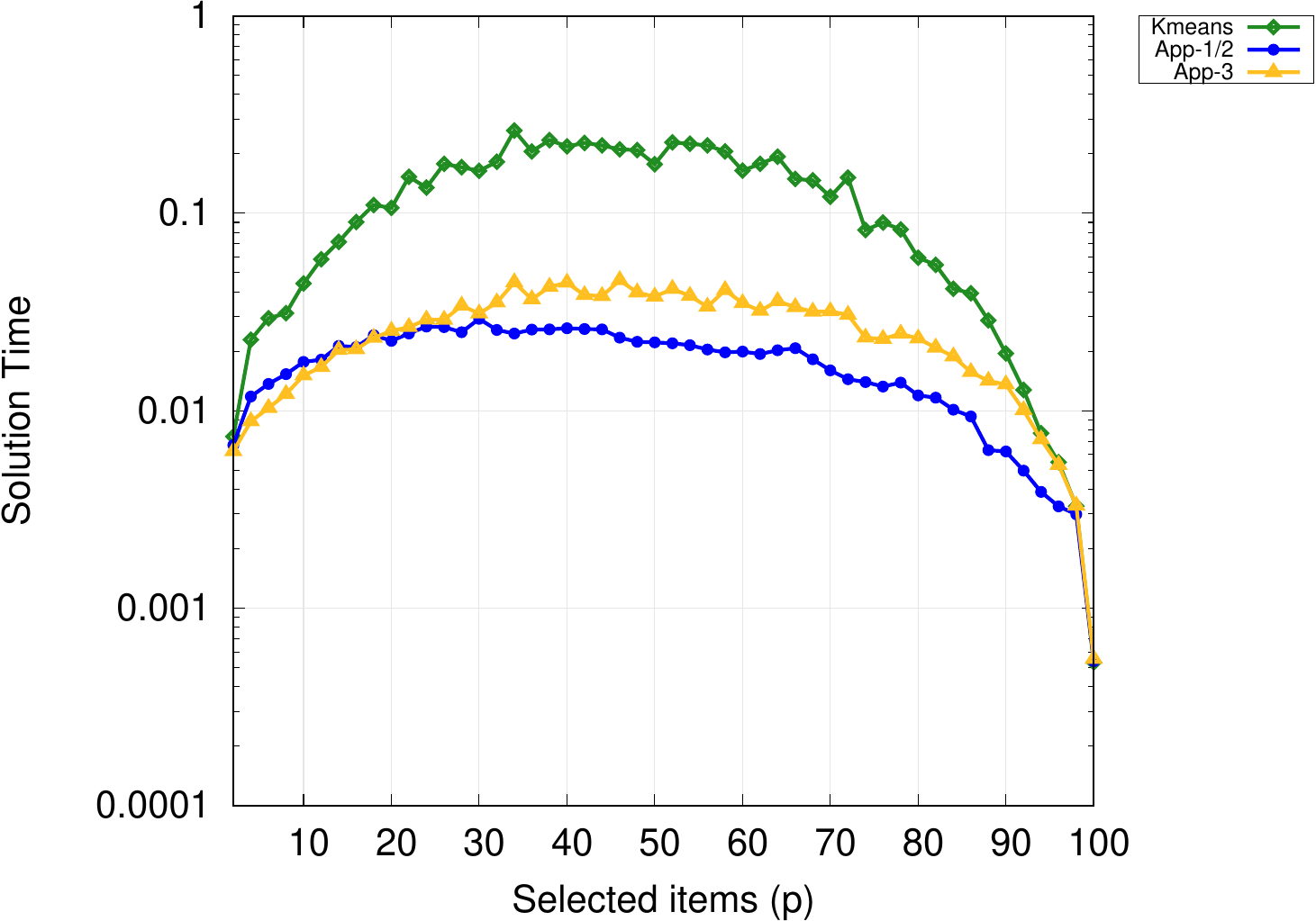}
         		\caption{Solution ($n=100$)}\label{fig:soltime-n100}
     		\end{subfigure}
     		\begin{subfigure}{0.32\textwidth}
         		\centering
         			\includegraphics[width=\textwidth]{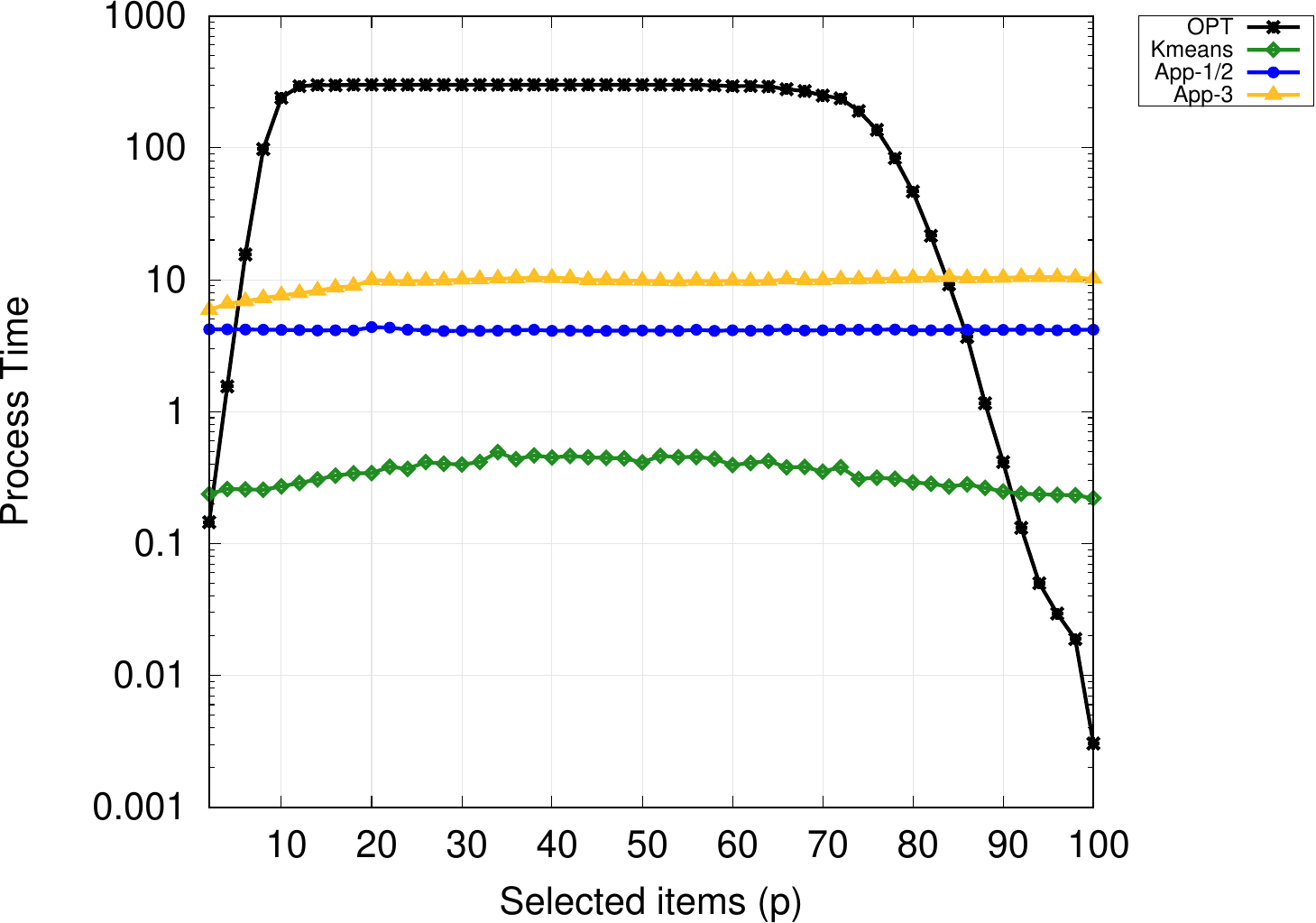}
         		\caption{Process ($n=100$)}\label{fig:protime,n100}
     		\end{subfigure}
        		\caption{Selection - time performance of large instances}\label{appfig:sel-time-performance-large}
\end{figure}

\end{document}